\newtheorem{theorem}{Theorem}[section]
\newtheorem{lemma}[theorem]{Lemma}
\newtheorem{proposition}[theorem]{Proposition}
\newtheorem{corollary}[theorem]{Corollary}
\theoremstyle{definition}
\newtheorem{definition}[theorem]{Definition}
\numberwithin{equation}{section}
\begin{document}
%%%%%%%%%%%%%%%%%%%%%%%%%%%%%%%%%%%%%%%%%%%%%%%%%%%%%%%%%%%%%%%%%%%%%%%%%%%%%%
\title[Generalized Ricci flow I]
{Generalized Ricci flow I: Higher derivatives estimates for compact
manifolds}
%%%%%%%%%%%%%%%%%%%%%%%%%%%%%%%%%%%%%%%%%%%%%%%%%%%%%%%%%%%%%%%%%%%%%%%%%%%%%%
\author{Yi Li}
\address{Department of Mathematics, Harvard University, One Oxford street, Cambridge, MA 02138}
\email{yili@math.harvard.edu}

\subjclass[2010]{Primary 53C44, 35K55}

%\date{Received by the editors August 29, 2010 and accepted in January 30, 2011}
\keywords{Generalized Ricci flow, BBS derivative estimates, compactness
theorems, energy functionals}

\begin{abstract} We consider a generalized Ricci flow
with a given (not necessarily closed) three-form and establish
the higher derivatives estimates for compact manifolds. As an
application, we prove the compactness theorem for this generalized
Ricci flow. The similar results still hold for a more generalized Ricci flow.
\end{abstract}

%%%%%%%%%%%%%%%%%%%%%%%%%%%%%%%%%%%%%%%%%%%%%%%%%%%%%%%%%%%%%%%%%%%%%%%%%%%%%%

%%%%%%%%%%%%%%%%%%%%%%%%%%%%%%%%%%%%%%%%%%%%%%%%%%%%%%%%%%%%%%%%%%%%%%%%%%%%%%
\maketitle

\tableofcontents

%%%%%%%%%%%%%%%%%%%%%%%%%%%%%%%%%%%%%%%%%%%%%%%%%%%%%%%%%%%%%%%%%%%%%%%%%%%%%%
\section{Introduction}
%%%%%%%%%%%%%%%%%%%%%%%%%%%%%%%%%%%%%%%%%%%%%%%%%%%%%%%%%%%%%%%%%%%%%%%%%%%%

Throughout this paper manifolds always mean smooth and
closed (compact and without boundary) manifolds. Let $\mathfrak{Met}(M)$ denote
the space of smooth metrics on a manifold $M$, and $C^{\infty}(M)$ the set of
all smooth functions on $M$. We denote by $C$ the universal constants
depending only on the dimension of $M$,
which may take different values at different places.

An important and natural problem in differential geometry is to find a canonical metric on a given manifold. A classical example is the uniformization theorem (e.g., \cite{CK}), which
says that every smooth surface admits a unique conformal metric of constant
curvature. To generalize to higher dimensional manifolds, Hamilton \cite{H1}
introduced a system of equations
\begin{equation}
\frac{\partial g_{ij}}{\partial t}=-2R_{ij},\label{1.1}
\end{equation}
now called the Ricci flow, an analogue of the heat equation for metrics.

There are two ways to understand the Ricci flow: one way comes from the two-dimensional
sigma model (see \cite{B}), while another comes from Perelman's energy
functional (\cite{P}) defined by
\begin{equation}
\mathcal{F}(g,f)=\int_{M}\left(R+|\nabla f|^{2}\right)e^{-f}dV_{g}, \ \ \
(g,f)\in\mathfrak{Met}(M)\times C^{\infty}(M),\label{1.2}
\end{equation}
where $R$, $\nabla$, and $dV_{g}$, is the scalar curvature, Levi-Civita connection, and
volume form of $g$, respectively. He
showed that the Ricci flow is the gradient flow of (\ref{1.2}) and the
functional $\mathcal{F}$ is monotonic along this gradient flow. Precisely, under
the following system
\begin{equation}
\frac{\partial g_{ij}}{\partial t}=-2R_{ij}, \ \ \
\frac{\partial f}{\partial t}=-R-\Delta f+|\nabla f|^{2},\label{1.3}
\end{equation}
we have
\begin{equation}
\frac{d}{dt}\mathcal{F}(g,f)=2\int_{M}\left|R_{ij}+\nabla_{i}\nabla_{j}f\right|^{2}
e^{-f}dV_{g}\geq0.\label{1.4}
\end{equation}
Perelman's energy functional plays an essential role in determining the structures
of singularities of the Ricci flow and then the proof of
Poincar\'e conjecture and Thurston's generalization conjecture; for more details we refer the readers to
\cite{CZ, CLN, CCGGIIKLLN, KL, MT, P}.

%%%%%%%%%%%%%%%%%%%%%%%%%%%%%%%%%%%%%%%%%%%%%%%%%%%%%%%%%%%%%%%%%%%%%%%%%%%%%%
\subsection{Ricci flow coupled with a one form or a two form}
%%%%%%%%%%%%%%%%%%%%%%%%%%%%%%%%%%%%%%%%%%%%%%%%%%%%%%%%%%%%%%%%%%%%%%%%%%%%%%

If we consider the two-dimensional nonlinear sigma model \cite{B, OSW}, then we obtain a
generalized Ricci flow that is the Ricci flow coupled with the evolution
equation for a two form. This flow can be also obtained from the point of view
of Perelman-type energy functional.

Denoting by $\mathcal{A}^{p}(M)$ the space of $p$-forms on $M$, we consider the energy functional
\begin{equation*}
\mathcal{F}^{(1)}: \mathfrak{Met}(M)\times\mathcal{A}^{2}(M)\times C^{\infty}(M)\longrightarrow\mathbb{R}
\end{equation*}
defined by
\begin{equation}
\mathcal{F}^{(1)}(g,B,f)=\int_{M}\left(R+|\nabla f|^{2}-\frac{1}{12}|H|^{2}
\right)e^{-f}dV_{g},\label{1.5}
\end{equation}
where $H=dB$. As showed in \cite{OSW}, the gradient flow of $\mathcal{F}^{(1)}$
satisfies
\begin{eqnarray}
\frac{\partial g_{ij}}{\partial t}&=&-2R_{ij}-2\nabla_{i}\nabla_{j}f+\frac{1}{2}
H_{i}{}^{k\ell}H_{jk\ell},\label{1.6}\\
\frac{\partial B_{ij}}{\partial t}&=&3\nabla_{k}H^{k}{}_{ij}
-3H^{k}{}_{ij}\nabla_{k}f,\label{1.7}\\
\frac{\partial f}{\partial t}&=&-R-\Delta f+\frac{1}{4}|H|^{2},\label{1.8}
\end{eqnarray}
and under a family of diffeomorphisms the system (\ref{1.6})---(\ref{1.8}) is
equivalent to
\begin{eqnarray}
\frac{\partial g_{ij}}{\partial t}&=&-2R_{ij}+\frac{1}{2}H_{i}{}^{k\ell}
H_{jk\ell},\label{1.9}\\
\frac{\partial B_{ij}}{\partial t}&=&3\nabla_{k}H^{k}{}_{ij},\label{1.10}\\
\frac{\partial f}{\partial t}&=&-R-\Delta f+|\nabla f|^{2}+\frac{1}{4}|H|^{2}.\label{1.11}
\end{eqnarray}

Using the adjoint operator $d^{\ast}$, the equation (\ref{1.10})
can be written as
\begin{equation}
\frac{\partial B_{ij}}{\partial t}=-(d^{\ast}H)_{ij},\label{1.12}
\end{equation}
and therefore (because of $H=dB$)
\begin{equation}
\frac{\partial H}{\partial t}=-dd^{\ast}H=\Delta_{{\rm HL}}H,\label{1.13}
\end{equation}
where $\Delta_{{\rm HL}}=-(dd^{\ast}+d^{\ast}d)$ denotes the
Hodge-Laplace operator.

The flow (\ref{1.9})---(\ref{1.10}) can be interpreted as the connection
Ricci flow \cite{S1}. If we replace $H=dB$ by $F=dA$, i.e.,
replace a two form by a one form, then the flow (\ref{1.6})---(\ref{1.7})
or (\ref{1.9})---(\ref{1.10}) is exactly the Ricci Yang-Mills flow
studied by Streets \cite{S2} and Young \cite{Y}.

%%%%%%%%%%%%%%%%%%%%%%%%%%%%%%%%%%%%%%%%%%%%%%%%%%%%%%%%%%%%%%%%%%%%%%%%%%%%%%
\subsection{Ricci flow coupled with a one form and a two form}
%%%%%%%%%%%%%%%%%%%%%%%%%%%%%%%%%%%%%%%%%%%%%%%%%%%%%%%%%%%%%%%%%%%%%%%%%%%%%%

There is another generalized Ricci flow which connects to Thurston's
conjecture---roughly stating that a three-dimensional manifold with a given
topology has a canonical decomposition into simple three-dimensional
manifolds, each of which admits one, and only one, of eight homogeneous
geometries: $\mathbb{S}^{3}$, the round three-sphere; $\mathbb{R}^{3}$, the Euclidean space; $\mathbb{H}^{3}$, the standard hyperbolic space; $\mathbb{S}^{2}\times\mathbb{R}$;
$\mathbb{H}^{2}\times\mathbb{R}$; ${\rm Nil}$, the three-dimensional nilpotent
Heisenberg group; $\widetilde{{\rm SL}}(2,\mathbb{R})$; ${\rm Sol}$, the three
-dimensional solvable Lie group. The proof of Thurston's conjecture can be found in \cite{CZ, KL, MT, P}.

To better understanding Thurston's conjecture, Gegenberg and Kunstatter
\cite{GK} proposed a generalized flow by considering the modified
$3D$ stringy theory. This flow is the Ricci flow coupled with evolution
equations for a one form and a two form. As in (\ref{1.5}), we define an energy
functional
\begin{equation*}
\mathcal{F}^{(2)}: \mathfrak{Met}(M)\times\mathcal{A}^{1}(M)
\times\mathcal{A}^{2}(M)\times C^{\infty}(M)\longrightarrow \mathbb{R}
\end{equation*}
by
\begin{equation}
\mathcal{F}^{(2)}(g,A,B,f)=\int_{M}
\left(R+|\nabla f|^{2}-\frac{1}{12}|H|^{2}-\frac{1}{2}
|F|^{2}\right)e^{-f}dV_{g},\label{1.14}
\end{equation}
where $H=dB$, and $F=dA$. In \cite{HHKL}, the authors showed that the gradient flow of $\mathcal{F}^{(2)}$ satisfies
\begin{eqnarray}
\frac{\partial g_{ij}}{\partial t}&=&-2R_{ij}-2\nabla_{i}\nabla_{j}f+\frac{1}{2}
H_{i}{}^{k\ell}H_{jk\ell}+2F_{i}{}^{k}F_{jk},\label{1.15}\\
\frac{\partial A_{i}}{\partial t}&=&2\nabla_{j}F^{j}{}_{i}
-2F^{j}{}_{i}\nabla_{j}f,\label{1.16}\\
\frac{\partial B_{ij}}{\partial t}&=&3\nabla_{k}H^{k}{}_{ij}
-3H^{k}{}_{ij}\nabla_{k}f,\label{1.17}\\
\frac{\partial f}{\partial t}&=&-R
-\Delta f+\frac{1}{4}|H|^{2}+|F|^{2},\label{1.18}
\end{eqnarray}
and under a family of diffeomorphisms the system (\ref{1.15})---(\ref{1.18}) is
equivalent to
\begin{eqnarray}
\frac{\partial g_{ij}}{\partial t}&=&-2R_{ij}+\frac{1}{2}H_{i}{}^{k\ell}H_{jk\ell}
+2F_{i}{}^{k}F_{jk},\label{1.19}\\
\frac{\partial A_{i}}{\partial t}&=&2\nabla_{j}F^{j}{}_{i},\label{1.20}\\
\frac{\partial B_{ij}}{\partial t}&=&3\nabla_{k}H^{k}{}_{ij},\label{1.21}\\
\frac{\partial f}{\partial t}&=&-R-\Delta f+|\nabla f|^{2}+\frac{1}{4}|H|^{2}
+|F|^{2}.\label{1.22}
\end{eqnarray}

Using again the adjoint operator $d^{\ast}$, we have
\begin{equation}
\frac{\partial F}{\partial t}=\Delta_{{\rm HL}}F, \ \ \ \frac{\partial H}{\partial t}
=\Delta_{{\rm HL}}H.\label{1.23}
\end{equation}

The flow (\ref{1.19})---(\ref{1.21})
clearly contains the Ricci flow, the flow (\ref{1.9})---(\ref{1.10}) or the
connection Ricci flow, and the Ricci Yang-Mills flow; we expect this flow
can give another proof of the Poincar\'e conjecture and Thurston's
generalization conjecture, with less analysis on singularities.

%%%%%%%%%%%%%%%%%%%%%%%%%%%%%%%%%%%%%%%%%%%%%%%%%%%%%%%%%%%%%%%%%%%%%%%%%%%%%%
\subsection{Main results}
%%%%%%%%%%%%%%%%%%%%%%%%%%%%%%%%%%%%%%%%%%%%%%%%%%%%%%%%%%%%%%%%%%%%%%%%%%%%%%

For convenience,
we refer to {\rm GRF} the generalized Ricci flow and ${\rm RF}(A,B)$ the Ricci flow coupled
with a one form $A$ and a two form $B$.

Let $(M,g)$ denote an $n$-dimensional closed Riemannian
manifold with a three form $H=\{H_{ijk}\}$. In the first part of this paper we consider
the following GRF on $M$:
\begin{eqnarray}
\frac{\partial}{\partial
t}g_{ij}(x,t)&=&-2R_{ij}(x,t)+\frac{1}{2}H_{ik\ell}(x,t)H_{j}{}^{k\ell}(x,t),
\label{1.24}\\
\frac{\partial}{\partial t}H(x,t)&=&\Delta_{{\rm HL},g(x,t)}H(x,t), \ \ \
H(x,0)=H(x), \ g(x,0)=g(x).\label{1.25}
\end{eqnarray}

It is clearly from (\ref{1.9}) and (\ref{1.13}) that the gradient
flow of the energy functional $\mathcal{F}^{(1)}$ is a special case of (\ref{1.24})
---(\ref{1.25}). The corresponding case that $H$ is closed is called the refined generalized
Ricci flow (RGRF):
\begin{eqnarray}
\frac{\partial}{\partial
t}g_{ij}(x,t)&=&-2R_{ij}(x,t)+\frac{1}{2}H_{ik\ell}(x,t)H_{j}{}^{k\ell}(x,t),\label{1.26}\\
\frac{\partial}{\partial
t}H(x,t)&=&-dd^{\ast}_{g(x,t)}H(x,t), \ H(x,0)=H(x), \
g(x,0)=g(x).\label{1.27}
\end{eqnarray}
Here $d^{\ast}_{g(x,t)}$ is the dual operator of $d$ with respect to
the metric $g(x,t)$.

\begin{lemma} \label{l1.1}Under {\rm RGRF}, $H(x,t)$ is closed if the initial value $H(x)$ is closed.
\end{lemma}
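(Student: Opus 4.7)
The plan is to exploit the fact that the exterior derivative $d$ is metric-independent and that $d^{2}=0$. Since the evolution equation in RGRF has the form $\partial_{t}H = -d\,d^{\ast}_{g(t)}H$, applying $d$ to both sides should immediately kill the right-hand side, producing a trivial evolution equation for $dH$.

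Concretely, I would first note that because $d$ depends only on the smooth structure of $M$, not on $g(x,t)$, it commutes with the time derivative: for any smooth family of forms $\omega(x,t)$ one has $d(\partial_{t}\omega) = \partial_{t}(d\omega)$. Applied to $H(x,t)$, this gives
\begin{equation*}
\frac{\partial}{\partial t}\bigl(dH(x,t)\bigr) \;=\; d\!\left(\frac{\partial H}{\partial t}\right) \;=\; -\,d\bigl(d\,d^{\ast}_{g(x,t)}H(x,t)\bigr) \;=\; -\,d^{2}\bigl(d^{\ast}_{g(x,t)}H(x,t)\bigr) \;=\; 0,
\end{equation*}
where the last equality uses $d^{2}=0$.

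Hence, at each point $x\in M$, the $4$-form $dH(x,t)$ satisfies the trivial ODE $\partial_{t}(dH) = 0$, so $dH(x,t)=dH(x,0)$ for every $t$ in the interval of existence. By the hypothesis that the initial datum $H(x)$ is closed, $dH(x,0)=0$, and therefore $dH(x,t)=0$ for all $t$, as required. There is no real obstacle here; the only subtlety worth flagging is the metric-independence of $d$, which is what licenses interchanging $d$ and $\partial_{t}$ and is precisely the reason RGRF (rather than GRF with the full Hodge Laplacian) was introduced in this setting.
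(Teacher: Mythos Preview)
Your proof is correct and is essentially identical to the paper's own argument: both use the metric-independence of $d$ to commute $d$ with $\partial_t$, then kill the right-hand side via $d^2=0$, concluding $dH(x,t)=dH(x,0)=0$. The only difference is that you spell out the step $d(dd^{\ast}H)=d^{2}(d^{\ast}H)=0$ explicitly, whereas the paper leaves this implicit.
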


\begin{proof} Since the exterior derivative $d$ is independent of
the metric, we have
\begin{equation*}
\frac{\partial}{\partial t}dH(x,t)=d\frac{\partial}{\partial
t}H(x,t)=d\left(-dd^{\ast}_{g(x,t)}H(x,t)\right)=0.
\end{equation*}
so $dH(x,t)=dH(x)=0$.
\end{proof}

The closedness of $H$ is very important and has physical
interpretation \cite{B, OSW}. Streets \cite{S1} considered the connection Ricci flow in which $H$ is the
geometric torsion of connection.

\begin{proposition} \label{p1.2}If $(g(x,t),H(x,t))$ is a solution of {\rm
RGRF} and the initial value $H(x)$ is closed, then it is also a
solution of {\rm GRF}.
\end{proposition}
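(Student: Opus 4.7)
The plan is to observe that the only difference between RGRF and GRF is in the evolution equation for $H$: RGRF uses $-d_{g(x,t)} d^{\ast}_{g(x,t)} H$, while GRF uses $\square_{g(x,t)} H = -(d d^{\ast} + d^{\ast} d)H$. So it suffices to show that under RGRF with closed initial data, the term $d^{\ast} d H(x,t)$ vanishes for all $t$.

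First I would invoke the preceding lemma: since $H(x)$ is closed, the lemma guarantees that $H(x,t)$ remains closed along RGRF, i.e.\ $d H(x,t) = 0$ for all $t$ in the interval of existence. Consequently $d^{\ast}_{g(x,t)} d H(x,t) = 0$, and therefore
\begin{equation*}
\square_{g(x,t)} H(x,t) = -\bigl(d d^{\ast}_{g(x,t)} + d^{\ast}_{g(x,t)} d\bigr) H(x,t) = -d \, d^{\ast}_{g(x,t)} H(x,t).
\end{equation*}
Substituting this into the RGRF equation $\partial_t H = -d\, d^{\ast}_{g(x,t)} H$ yields $\partial_t H = \square_{g(x,t)} H$, which is precisely the GRF equation for $H$. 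The equation for $g_{ij}$ is identical in both systems, so no further argument is needed there. Finally, the initial conditions coincide, so $(g(x,t), H(x,t))$ is a solution to GRF.

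There is no real obstacle here; the proposition is essentially a formal consequence of the previous lemma together with the identity $\square = -(dd^{\ast} + d^{\ast} d)$. The only point that deserves a brief comment is that the lemma is applied pointwise in $t$, which is legitimate because closedness is preserved throughout the flow.
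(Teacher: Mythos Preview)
Your proposal is correct and follows essentially the same approach as the paper: invoke Lemma~1.1 to conclude that $H(x,t)$ stays closed, then observe that $\square_{g(x,t)}H(x,t) = -d\,d^{\ast}_{g(x,t)}H(x,t)$ since the $d^{\ast}d$ term vanishes. The paper's proof is just a two-line version of what you wrote.
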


\begin{proof} From Lemma \ref{l1.1} and the assumption we know that $H(x,t)$ are all closed.
Hence $\Delta_{{\rm HL},g(x,t)}H(x,t)=-dd^{\ast}_{g(x,t)}H(x,t)$.
\end{proof}

For {\rm GRF}, a basic and natural question is the existence.
The short-time existence for {\rm RGRF} has been established in \cite{HHKL},
where the authors have already showed the short-time existence
for ${\rm RF}(A,B)$ obviously including RGRF. In this paper, we prove the short-
time existence for RGF.

\begin{theorem} \label{t1.3}There is a unique solution to {\rm GRF} for
a short time.
More precisely, let $(M,g_{ij}(x))$ be an $n$-dimensional closed
Riemannian manifold with a three form $H=\{H_{ijk}\}$, then there
exists a constant $T=T(n)>0$ depending only on $n$ such that the
evolution system
\begin{eqnarray*}
\frac{\partial}{\partial
t}g_{ij}(x,t)&=&-2R_{ij}(x,t)+\frac{1}{2}g^{kp}(x,t)g^{\ell q}(x,t)
H_{ik\ell}(x,t)H_{jpq}(x,t),
\\
\frac{\partial}{\partial t}H(x,t)&=&\Delta_{{\rm HL},g(x,t)}H(x,t), \ \ \
H(x,0)=H(x), \ \ \ g(x,0)=g(x),
\end{eqnarray*}
has a unique solution $(g_{ij}(x,t),H_{ijk}(x,t))$ for a short time
$0\leq t\leq T$.
\end{theorem}

After establishing the local existence, we are able to prove the
higher derivatives estimates for {\rm GRF}. Precisely, we have the
following

\begin{theorem} \label{t1.4}Suppose that $(g(x,t),H(x,t))$ is a solution to {\rm GRF} on a
closed manifold $M^{n}$ and $K$ is an arbitrary given positive
constant. Then for each $\alpha>0$ and each integer $m\geq1$ there
exists a constant $C_{m}$ depending on $m, n, \max\{\alpha,1\}$, and
$K$ such that if
\begin{equation*}
|{\rm Rm}(x,t)|_{g(x,t)}\leq K, \ \ \ |H(x)|_{g(x)}\leq K
\end{equation*}
for all $x\in M$ and $t\in[0,\alpha/K]$, then
\begin{equation}
|\nabla^{m-1} {\rm Rm}(x,t)|_{g(x,t)}+|\nabla^{m}
H(x,t)|_{g(x,t)}\leq\frac{C_{m}}{t^{m/2}}\label{1.28}
\end{equation}
for all $x\in M$ and $t\in(0,\alpha/K]$.
\end{theorem}

As an application, we can prove the compactness theorem for {\rm
GRF}.

\begin{theorem} \label{t1.5}{\bf (Compactness for GRF)} Let
$\{(M_{k},g_{k}(t),H_{k}(t),O_{k})\}_{k\in\mathbb{N}}$ be a sequence of
complete pointed solutions to {\rm GRF} for
$t\in[\alpha,\omega)\ni0$ such that

\begin{itemize}

\item[(i)] there is a constant $C_{0}<\infty$ independent of
$k$ such that
\begin{equation*}
\sup_{(x,t)\in M_{k}\times(\alpha,\omega)}\left|{\rm
Rm}_{g_{k}(x,t)}\right|_{g_{k}(x,t)}\leq C_{0}, \ \ \ \sup_{x\in
M_{k}}|H_{k}(x,\alpha)|_{g_{k}(x,\alpha)}\leq C_{0},
\end{equation*}

\item[(ii)] there exists a constant $\iota_{0}>0$ satisfies
\begin{equation*}
{\rm inj}_{g_{k}(0)}(O_{k})\geq\iota_{0}.
\end{equation*}

\end{itemize}
Then there exists a subsequence $\{j_{k}\}_{k\in\mathbb{N}}$ such that
\begin{equation*}
(M_{j_{k}},g_{j_{k}}(t),H_{j_{k}}(t),O_{j_{k}})\longrightarrow
(M_{\infty},g_{\infty}(t),H_{\infty}(t),O_{\infty}),
\end{equation*}
converges to a complete pointed solution
$(M_{\infty},g_{\infty}(t),H_{\infty}(t),O_{\infty}),
t\in[\alpha,\omega)$ to {\rm GRF}
 as $k\to\infty$.
\end{theorem}

In the second part of this paper, we consider the Ricci flow coupled with
a one form and a two form. This flow is the gradient flow of
$\mathcal{F}^{(2)}$ and takes the form:
\begin{eqnarray}
\frac{\partial}{\partial t}g_{ij}(x,t)&=&-2R_{ij}+\frac{1}{2}
H_{i}{}^{k\ell}(x,t)H_{jk\ell}(x,t)+2F_{i}{}^{k}(x,t)F_{jk}(x,t),\label{1.29}\\
\frac{\partial}{\partial t}A_{i}(x,t)&=&2\nabla_{j}F^{j}{}_{i}(x,t), \ \ \ A_{i}(x,0)=A_{i}(x), \ \ \ g_{ij}(x,0)=g_{ij}(x),\label{1.30}\\
\frac{\partial }{\partial t}B_{ij}(x,t)&=&3\nabla_{k}H^{k}{}_{ij}(x,t), \ \ \
B_{ij}(x,0)=B_{ij}(x).\label{1.31}
\end{eqnarray}
Here $A=\{A_{i}\}$ and $B=\{B_{ij}\}$ is a one form and a two form
on $M$, respectively, and $F=dA, H=dB$. For this flow, we can also prove the short-time existence, higher derivative estimates, and the compactness theorem.

The rest of this paper is organized as follows. In Section \ref{2}, we prove
the short-time existence and uniqueness of the GRF for any given
three form $H$. In Section \ref{3}, we compute the evolution equations for the
Levi-Civita connections, Riemann, Ricci, and scalar curvatures of a
solution to the GRF. In Section \ref{4}, we establish higher derivative
estimates for GRF, called {\it Bernstein-Bando-Shi(BBS) derivative
estimates} (e.g., \cite{CZ, CK, CCGGIIKLLN, MT, Shi}). In Section \ref{5}, we prove the compactness
theorem for GRF by using BBS estimates. In Section \ref{6}, based on the work
of \cite{HHKL}, the similar results are established for ${\rm RF}(A,B)$.

{\bf Acknowledgment.} The author thanks his advisor, Professor
Shing-Tung Yau, for helpful discussions. The author expresses his
gratitude to Professor Kefeng Liu for his interest in this work and
for his numerous help in mathematics. He also thanks Valentino Tosatti
and Jeff Streets for several useful conversations.

%%%%%%%%%%%%%%%%%%%%%%%%%%%%%%%%%%%%%%%%%%%%%%%%%%%%%%%%%%%%%%%%%%%%%%%%%%%%%%
\section{Short-time existence of {\rm GRF}}\label{2}
%%%%%%%%%%%%%%%%%%%%%%%%%%%%%%%%%%%%%%%%%%%%%%%%%%%%%%%%%%%%%%%%%%%%%%%%%%%%%%

In this section we establish the short-time existence for GRF. Our
method is standard, that is, DeTurck trick which is used in Ricci
flow to prove its short-time existence. We assume that $M$ is an
$n$-dimensional closed Riemannian manifold with metric
\begin{equation}
d\widetilde{s}^{2}=\widetilde{g}_{ij}(x)dx^{i}dx^{j}\label{2.1}
\end{equation}
and with Riemannian curvature tensor $\{\widetilde{R}_{ijk\ell}\}$. We
also assume that $\widetilde{H}=\{\widetilde{H}_{ijk}\}$ is a fixed
three form on $M$. In the following we put
\begin{equation}
h_{ij}:=H_{ik\ell}H_{j}{}^{k\ell}.\label{2.2}
\end{equation}

Suppose the metrics
\begin{equation}
d\widehat{s}^{2}_{t}=\frac{1}{2}\widehat{g}_{ij}(x,t)dx^{i}dx^{j}\label{2.3}
\end{equation}
are the solutions of\footnote{In the following computations we don't
need to use the evolution equation for $H(x,t)$, hence we only
consider the evolution equation for metrics.}
\begin{equation}
\frac{\partial}{\partial
t}\widehat{g}_{ij}(x,t)=-2\widehat{R}_{ij}(x,t)+\widehat{h}_{ij}(x,t),
\ \ \ \widehat{g}_{ij}(x,0)=\widetilde{g}_{ij}(x)\label{2.4}
\end{equation}
for a short time $0\leq t\leq T$. Consider a family of smooth
diffeomorphisms $\varphi_{t}: M\to M(0\leq t\leq T)$ of $M$. Let
\begin{equation}
ds^{2}_{t}:=\varphi^{\ast}_{t}d\widehat{s}^{2}_{t}, \ \ \ 0\leq
t\leq T\label{2.5}
\end{equation}
be the pull-back metrics of $d\widehat{s}^{2}_{t}$. For coordinates
system $x=\{x^{1},\cdots,x^{n}\}$ on $M$, let
\begin{equation}
ds^{2}_{t}=g_{ij}(x,t)dx^{i}dx^{j}\label{2.6}
\end{equation}
and
\begin{equation}
y(x,t)=\varphi_{t}(x)=\{y^{1}(x,t),\cdots,y^{n}(x,t)\}.\label{2.7}
\end{equation}
Then we have
\begin{equation}
g_{ij}(x,t)=\frac{\partial y^{\alpha}}{\partial x^{i}}\frac{\partial
y^{\beta}}{\partial x^{j}}\widehat{g}_{\alpha\beta}(y,t).\label{2.8}
\end{equation}
By the assumption $\widehat{g}_{\alpha\beta}(x,t)$ are the solutions
of
\begin{equation}
\frac{\partial}{\partial
t}\widehat{g}_{\alpha\beta}(x,t)=-2\widehat{R}_{\alpha\beta}(x,t)+\widehat{h}_{\alpha\beta}(x,t),
\ \ \ \widehat{g}_{\alpha\beta}(x,0)=\widetilde{g}_{\alpha\beta}(x).\label{2.9}
\end{equation}
We use $R_{ij},\widehat{R}_{ij}, \widetilde{R}_{ij}$;
$\Gamma^{k}_{ij}, \widehat{\Gamma}^{k}_{ij},
\widetilde{\Gamma}^{k}_{ij}$; $\nabla, \widehat{\nabla},
\widetilde{\nabla}$; $h_{ij}, \widehat{h}_{ij}, \widetilde{h}_{ij}$
to denote the Ricci curvatures, Christoffel symbols, covariant
derivatives, and products of the three form $H$ with respect to
$\widetilde{g}_{ij}, \widehat{g}_{ij}, g_{ij}$ respectively. Then
\begin{eqnarray*}
\frac{\partial}{\partial t}g_{ij}(x,t)&=&\frac{\partial
y^{\alpha}}{\partial x^{i}}\frac{\partial y^{\beta}}{\partial
x^{j}}\left(\frac{\partial}{\partial
t}\widehat{g}_{\alpha\beta}(y,t)\right)+\frac{\partial}{\partial
x^{i}}\left(\frac{\partial y^{\alpha}}{\partial
t}\right)\frac{\partial y^{\beta}}{\partial
x^{j}}\widehat{g}_{\alpha\beta}(y,t) \\
&&+ \ \frac{\partial y^{\alpha}}{\partial
x^{i}}\frac{\partial}{\partial x^{j}}\left(\frac{\partial
y^{\beta}}{\partial t}\right)\widehat{g}_{\alpha\beta}(y,t).
\end{eqnarray*}
From (\ref{2.9}) we have
\begin{equation*}
\frac{\partial}{\partial
t}\widehat{g}_{\alpha\beta}(y,t)=-2\widehat{R}_{\alpha\beta}(y,t)
+\widehat{h}_{\alpha\beta}(y,t)+\frac{\partial\widehat{g}_{\alpha\beta}}{\partial
y^{\gamma}}\frac{\partial y^{\gamma}}{\partial t},
\end{equation*}
and
\begin{eqnarray*}
\frac{\partial}{\partial t}g_{ij}(x,t)&=&-2\frac{\partial
y^{\alpha}}{\partial x^{i}}\frac{\partial y^{\beta}}{\partial
x^{j}}\widehat{R}_{\alpha\beta}(y,t)+\frac{\partial
y^{\alpha}}{\partial x^{i}}\frac{\partial y^{\beta}}{\partial
x^{j}}\widehat{h}_{\alpha\beta}(y,t) \\
&&+ \ \frac{\partial y^{\alpha}}{\partial x^{i}}\frac{\partial
y^{\beta}}{\partial
x^{j}}\frac{\partial\widehat{g}_{\alpha\beta}}{\partial
y^{\gamma}}\frac{\partial y^{\gamma}}{\partial
t}+\frac{\partial}{\partial x^{i}}\left(\frac{\partial
y^{\alpha}}{\partial t}\right)\frac{\partial y^{\beta}}{\partial
x^{j}}\widehat{g}_{\alpha\beta}(y,t) \\
&&+ \ \frac{\partial y^{\alpha}}{\partial
x^{i}}\frac{\partial}{\partial x^{j}}\left(\frac{\partial
y^{\beta}}{\partial t}\right)\widehat{g}_{\alpha\beta}(y,t).
\end{eqnarray*}
Since
\begin{equation*}
R_{ij}(x,t)=\frac{\partial y^{\alpha}}{\partial x^{i}}\frac{\partial
y^{\beta}}{\partial x^{j}}\widehat{R}_{\alpha\beta}(y,t), \ \ \
h_{ij}(x,t)=\frac{\partial y^{\alpha}}{\partial x^{i}}\frac{\partial
y^{\beta}}{\partial x^{j}}\widehat{h}_{\alpha\beta}(y,t),
\end{equation*}
using the equation (\cite{Shi}, Sec. 2, (29)), we obtain
\begin{eqnarray}
\frac{\partial}{\partial t}g_{ij}(x,t)&=&-2R_{ij}(x,t)+h_{ij}(x,t)
\nonumber\\
&&+ \ \nabla_{i}\left(\frac{\partial y^{\alpha}}{\partial
t}\frac{\partial x^{k}}{\partial
y^{\alpha}}g_{jk}\right)+\nabla_{j}\left(\frac{\partial
y^{\alpha}}{\partial t}\frac{\partial x^{k}}{\partial
y^{\alpha}}g_{ik}\right).\label{2.10}
\end{eqnarray}
According to DeTurck trick, we define $y(x,t)=\varphi_{t}(x)$ by the
equation
\begin{equation}
\frac{\partial y^{\alpha}}{\partial t}=\frac{\partial
y^{\alpha}}{\partial
x^{k}}g^{\beta\gamma}(\Gamma^{k}_{\beta\gamma}-\widetilde{\Gamma}^{k}_{\beta\gamma}),
\ \ \ y^{\alpha}(x,0)=x^{\alpha},\label{2.11}
\end{equation}
then (\ref{2.10}) becomes
\begin{equation}
\frac{\partial}{\partial t}g_{ij}(x,t)=-2R_{ij}(x,t)+h_{ij}(x,t)
+\nabla_{i}V_{j}+\nabla_{j}V_{i}, \ \ \
g_{ij}(x,0)=\widetilde{g}_{ij}(x),\label{2.12}
\end{equation}
where
\begin{equation}
V_{i}=g_{ik}g^{\beta\gamma}(\Gamma^{k}_{\beta\gamma}
-\widetilde{\Gamma}^{k}_{\beta\gamma}).\label{2.13}
\end{equation}

\begin{lemma} \label{l2.1}The evolution equation (\ref{2.12}) is a strictly parabolic
system. Moreover,
\begin{eqnarray*}
\frac{\partial}{\partial
t}g_{ij}&=&g^{\alpha\beta}\widetilde{\nabla}_{\alpha}\widetilde{\nabla}_{\beta}g_{ij}
-g^{\alpha\beta}g_{ip}\widetilde{g}^{pq}\widetilde{R}_{j\alpha
q\beta}-g^{\alpha\beta}g_{jp}\widetilde{g}^{pq}\widetilde{R}_{i\alpha
q\beta} \\
&&+ \ \frac{1}{2}g^{\alpha\beta}g^{pq}(\widetilde{\nabla}_{i}g_{p\alpha}
\cdot\widetilde{\nabla}_{j}g_{q\beta}+2\widetilde{\nabla}g_{jp}\cdot
\widetilde{\nabla}_{q}g_{i\beta}-2\widetilde{\nabla}_{\alpha}g_{jp}\cdot\widetilde{\nabla}_{\beta}g_{iq}
\\
&&- \ 2\widetilde{\nabla}_{j}g_{p\alpha}\cdot\widetilde{\nabla}_{\beta}g_{iq}
-2\widetilde{\nabla}_{i}g_{p\alpha}\cdot\widetilde{\nabla}_{\beta}g_{jq})+\frac{1}{2}g^{\alpha\beta}g^{pq}H_{i\alpha
p}H_{j\beta q}.
\end{eqnarray*}
\end{lemma}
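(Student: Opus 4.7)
The plan is to expand each of the three summands on the right side of (2.12) in terms of the background covariant derivative $\widetilde{\nabla}$ and background curvature $\widetilde{R}_{ijkl}$, and to show that the second-order contributions assemble into the single rough Laplacian $g^{\alpha\beta}\widetilde{\nabla}_\alpha\widetilde{\nabla}_\beta g_{ij}$. This is the standard Hamilton--DeTurck identity adapted to the presence of the $H$-quadratic source term; strict parabolicity follows immediately by reading off the principal symbol at the end.

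First I would use the identity
\begin{equation*}
A^{k}{}_{ij} := \Gamma^{k}_{ij} - \widetilde{\Gamma}^{k}_{ij} = \frac{1}{2}g^{kl}\bigl(\widetilde{\nabla}_{i}g_{jl} + \widetilde{\nabla}_{j}g_{il} - \widetilde{\nabla}_{l}g_{ij}\bigr),
\end{equation*}
which expresses the difference of Christoffel symbols as a first-order operator in $g$ with respect to $\widetilde{\nabla}$. Substituting this into $V_{i} = g_{ik}g^{\beta\gamma}A^{k}{}_{\beta\gamma}$ and differentiating once more writes $\nabla_{i}V_{j} + \nabla_{j}V_{i}$ as a combination of $\widetilde{\nabla}^{2}g$ terms plus quadratic $\widetilde{\nabla}g \cdot \widetilde{\nabla}g$ terms. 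Next I would expand the Ricci tensor through the classical formula $R_{ij} = \widetilde{R}_{ij} + \widetilde{\nabla}_{k}A^{k}{}_{ij} - \widetilde{\nabla}_{i}A^{k}{}_{kj} + A\ast A$, and check that the top-order contributions from $-2R_{ij}$ combine with those from $\nabla_{i}V_{j}+\nabla_{j}V_{i}$ so that, after commuting $\widetilde{\nabla}$'s to produce the curvature couplings $-g^{\alpha\beta}g_{ip}\widetilde{g}^{pq}\widetilde{R}_{j\alpha q\beta}$ and its $i\leftrightarrow j$ partner, only $g^{\alpha\beta}\widetilde{\nabla}_{\alpha}\widetilde{\nabla}_{\beta}g_{ij}$ survives at second order.

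The source $h_{ij} = g^{\alpha\beta}g^{pq}H_{i\alpha p}H_{j\beta q}$ involves no derivatives of $g$ and is simply copied over as the zeroth-order inhomogeneity appearing at the end of the stated formula; this is the only place where the new $H$-dependence enters, and it does not affect the principal part. Collecting the surviving $\widetilde{\nabla}g\cdot \widetilde{\nabla}g$ terms symmetrized in $(i,j)$ gives the five quadratic first-order expressions displayed in the lemma. Finally, for strict parabolicity I read off the principal symbol of the right-hand side, viewed as an operator on symmetric $(0,2)$-tensors: at a nonzero covector $\xi$ it acts by $g^{\alpha\beta}\xi_{\alpha}\xi_{\beta}\cdot \mathrm{Id} = |\xi|_{g}^{2}\,\mathrm{Id}$, which is positive definite.

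The main obstacle is not conceptual but purely notational: matching the precise coefficients and index patterns of the five first-derivative quadratic terms against the stated expression requires disciplined symmetrization in $(i,j)$, free use of $\widetilde{\nabla}\widetilde{g}=0$, and careful application of the commutator $[\widetilde{\nabla}_{\alpha},\widetilde{\nabla}_{\beta}]$ acting on a $(0,2)$-tensor to account for the background-curvature terms. The $H$-source is inert in this bookkeeping, so the computation reduces, modulo adding the last term $\tfrac{1}{2}g^{\alpha\beta}g^{pq}H_{i\alpha p}H_{j\beta q}$ verbatim, to the well-known DeTurck identity for pure Ricci flow.
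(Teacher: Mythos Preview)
Your proposal is correct and is exactly the content underlying the paper's own proof, which simply cites Shi's Lemma 2.1 (the DeTurck identity for pure Ricci flow) and observes that the only modification is the addition of the zeroth-order term $\tfrac{1}{2}h_{ij}=\tfrac{1}{2}g^{\alpha\beta}g^{pq}H_{i\alpha p}H_{j\beta q}$. You have spelled out what that citation amounts to, including the correct observation that the $H$-source does not touch the principal part and hence strict parabolicity is unaffected.
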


\begin{proof} It is an immediate consequence of Lemma 2.1 of \cite{Shi}.
\end{proof}

Now we can prove the short-time existence of GRF.

\begin{theorem} \label{t2.2}There is a unique solution to {\rm GRF} for a short time.
More precisely, let $(M,g_{ij}(x))$ be an $n$-dimensional closed
Riemannian manifold with a three form $H=\{H_{ijk}\}$, then there
exists a constant $T=T(n)>0$ depending only on $n$ such that the
evolution system
\begin{eqnarray*}
\frac{\partial}{\partial
t}g_{ij}(x,t)&=&-2R_{ij}(x,t)+\frac{1}{2}g^{kp}(x,t)g^{\ell q}(x,t)H_{ik\ell}(x,t)H_{jpq}(x,t),
\\
\frac{\partial}{\partial t}H(x,t)&=&\Delta_{{\rm HL},g(x,t)}H(x,t), \ \ \
H(x,0)=H(x), \ \ \ g(x,0)=g(x),
\end{eqnarray*}
has a unique solution $(g_{ij}(x,t),H_{ijk}(x,t))$ for a short time
$0\leq t\leq T$.
\end{theorem}
\begin{proof} We proved that the first evolution equation is
strictly parabolic by Lemma \ref{l2.1}. Form the Ricci identity, we have
$\Delta_{{\rm HL},g(x,t)}H=\Delta_{{\rm LB},g(x,t)}H+{\rm Rm}\ast H$ which is also
strictly parabolic. Hence from the standard theory of parabolic
systems, the evolution system has a unique solution.
\end{proof}

%%%%%%%%%%%%%%%%%%%%%%%%%%%%%%%%%%%%%%%%%%%%%%%%%%%%%%%%%%%%%%%%%%%%%%%%%%%%%%
\section{Evolution of curvatures}\label{3}
%%%%%%%%%%%%%%%%%%%%%%%%%%%%%%%%%%%%%%%%%%%%%%%%%%%%%%%%%%%%%%%%%%%%%%%%%%%%%%

The evolution equation for the Riemann curvature tensors to the
usual Ricci flow (e.g., \cite{CZ, CK, CCGGIIKLLN, H1, MT, Shi}) is given by
\begin{equation}
\frac{\partial}{\partial t}R_{ijk\ell}=\Delta R_{ijk\ell}+\psi_{ijk\ell}
\end{equation}
where
\begin{eqnarray*}
\psi_{ijk\ell}&=&2(B_{ijk\ell}-B_{ij\ell k}-B_{i\ell jk}+B_{ikj\ell}) \\
&&- \ g^{pq}(R_{pjk\ell}R_{q\ell}+R_{ipk\ell}R_{qj}+R_{ijp\ell}
R_{qk}+R_{ijkp}R_{q\ell}),
\end{eqnarray*}
and $B_{ijk\ell}=g^{pr}g^{qs}R_{piqj}R_{rks\ell}$. From this we can easily
deduce the evolution equation for the Riemann curvature tensors to
GRF.
\\

Let $v_{ij}(x,t)$ be any symmetric $2$-tensor, we consider the flow
\begin{equation}
\frac{\partial}{\partial t}g_{ij}(x,t)=v_{ij}(x,t).\label{3.2}
\end{equation}
Applying a formula in \cite{CK} to our case $v_{ij}:=-2R_{ij}+\frac{1}{2}h_{ij}$ with $h_{ij}=H_{ik\ell}H_{j}{}^{k\ell}$, we obtain
\begin{eqnarray*}
\frac{\partial}{\partial
t}R_{ijk\ell}&=&-\frac{1}{2}\left(-2\nabla_{i}\nabla_{k}R_{j\ell}
+\frac{1}{2}\nabla_{i}\nabla_{k}h_{j\ell}+2\nabla_{i}\nabla_{\ell}
R_{jk}-\frac{1}{2}\nabla_{i}\nabla_{\ell}h_{jk}\right.
\\
&&+ \ \left.2\nabla_{j}\nabla_{k}R_{i\ell}-\frac{1}{2}\nabla_{j}\nabla_{k}
h_{i\ell}-2\nabla_{j}\nabla_{\ell}R_{ik}+\frac{1}{2}\nabla_{j}\nabla_{\ell}h_{ik}\right)
\\
&&+ \ \frac{1}{2}g^{pq}\left[R_{ijkp}\left(-2R_{q\ell}
+\frac{1}{2}h_{q\ell}\right)+R_{ijp\ell}\left(-2R_{qk}
+\frac{1}{2}h_{qk}\right)\right]
\\
&=&\nabla_{i}\nabla_{k}R_{j\ell}-\nabla_{i}\nabla_{\ell}R_{jk}
-\nabla_{j}\nabla_{k}R_{i\ell}+\nabla_{j}\nabla_{\ell}R_{ik}
\\
&&- \ g^{pq}(R_{ijkp}R_{q\ell}+R_{ijp\ell}R_{qk}) \\
&&+ \ \frac{1}{4}\left(-\nabla_{i}\nabla_{k}h_{j\ell}
+\nabla_{i}\nabla_{\ell}h_{jk}+\nabla_{j}\nabla_{k}h_{i\ell}
-\nabla_{j}\nabla_{\ell}h_{ik}\right)
\\
&&+ \ \frac{1}{4}g^{pq}\left(R_{ijkp}h_{q\ell}+R_{ijp\ell}h_{qk}\right) \\
&=&\Delta R_{ijk\ell}+2\left(B_{ijk\ell}-B_{ij\ell k}-B_{iljk}+B_{ikj\ell}\right) \\
&&- \ g^{pq}(R_{pjk\ell}R_{q\ell}+R_{ipk\ell}R_{qj}+R_{ijp\ell}R_{qk}+R_{ijkp}R_{q\ell})
\\
&&+ \ \frac{1}{4}\left(-\nabla_{i}\nabla_{k}h_{j\ell}
+\nabla_{i}\nabla_{\ell}h_{jk}+\nabla_{j}\nabla_{k}h_{i\ell}
-\nabla_{j}\nabla_{\ell}h_{ik}\right)
\\
&&+ \ \frac{1}{4}g^{pq}\left(R_{ijkp}h_{q\ell}+R_{ijp\ell}h_{qk}\right).
\end{eqnarray*}

\begin{proposition} \label{p3.1}For {\rm GRF} we have
\begin{eqnarray*}
\frac{\partial}{\partial t}R_{ijk\ell}&=&\Delta R_{ijk\ell}+2\left(B_{ijk\ell}-B_{ij\ell k}-B_{i\ell jk}+B_{ikj\ell}\right) \\
&&- \ g^{pq}(R_{pjk\ell}R_{q\ell}+R_{ipk\ell}R_{qj}+R_{ijp\ell}
R_{qk}+R_{ijkp}R_{q\ell})
\\
&&+ \ \frac{1}{4}\left(-\nabla_{i}\nabla_{k}h_{j\ell}
+\nabla_{i}\nabla_{\ell}h_{jk}+\nabla_{j}\nabla_{k}h_{i\ell}
-\nabla_{j}\nabla_{\ell}h_{ik}\right)
\\
&&+ \ \frac{1}{4}g^{pq}\left(R_{ijkp}h_{q\ell}+R_{ijp\ell}h_{qk}\right).
\end{eqnarray*}
\end{proposition}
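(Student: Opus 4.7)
The plan is to specialize the general formula for the evolution of the Riemann tensor under an arbitrary metric deformation $\partial_{t}g_{ij}=v_{ij}$ to the GRF metric evolution $v_{ij}=-2R_{ij}+\tfrac{1}{2}h_{ij}$, where $h_{ij}=H_{ikl}H_{j}{}^{kl}$. The general formula, recalled in the excerpt just above the proposition, is linear in $v$, so I would split the computation into two independent pieces: a \emph{Ricci part} with $v_{ij}=-2R_{ij}$, and an \emph{$H$-part} with $v_{ij}=\tfrac{1}{2}h_{ij}$. Since the formula is linear, the two contributions simply add to give the claimed identity.

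For the Ricci part I would reproduce (or cite) the classical Hamilton--Shi computation. After substituting $v_{ij}=-2R_{ij}$, one gets the combination $\nabla_{i}\nabla_{k}R_{jl}-\nabla_{i}\nabla_{l}R_{jk}-\nabla_{j}\nabla_{k}R_{il}+\nabla_{j}\nabla_{l}R_{ik}$ plus two Ricci--Riemann contractions. Using the second Bianchi identity $\nabla_{i}R_{jklm}+\nabla_{j}R_{kilm}+\nabla_{k}R_{ijlm}=0$ and the commutator of covariant derivatives on a $(0,4)$-tensor, this combination reorganizes into $\Delta R_{ijkl}+2(B_{ijkl}-B_{ijlk}-B_{iljk}+B_{ikjl})$ with the Ricci--Riemann terms absorbing into the four-term sum $g^{pq}(R_{pjkl}R_{qi}+R_{ipkl}R_{qj}+R_{ijpl}R_{qk}+R_{ijkp}R_{ql})$. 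This is exactly formula (3.1) of the excerpt, and I would invoke it from the cited references rather than rederive it from scratch.

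For the $H$-part I would simply substitute $v_{ij}=\tfrac{1}{2}h_{ij}$ into the general formula. The four second-derivative terms produce precisely
\[
\frac{1}{4}\bigl[-\nabla_{i}\nabla_{k}h_{jl}+\nabla_{i}\nabla_{l}h_{jk}+\nabla_{j}\nabla_{k}h_{il}-\nabla_{j}\nabla_{l}h_{ik}\bigr],
\]
and the two algebraic terms in the general formula yield $\tfrac{1}{4}g^{pq}(R_{ijkp}h_{ql}+R_{ijpl}h_{qk})$. No Bianchi-type reorganization is needed here, as this contribution is not expected to simplify further.

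The main obstacle (really the only non-routine step) is the Ricci-part reorganization that produces the Laplacian plus the $B$-terms; everything else is substitution and bookkeeping. Since that reorganization is classical and already referenced in the excerpt, the proof reduces to (i) applying the general variation formula linearly, (ii) citing the Ricci-flow calculation for the $-2R_{ij}$ piece, and (iii) writing out the $\tfrac{1}{2}h_{ij}$ piece directly, then checking that no cross terms are dropped when the two contributions are summed.
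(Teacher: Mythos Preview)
Your proposal is correct and follows essentially the same route as the paper: substitute $v_{ij}=-2R_{ij}+\tfrac12 h_{ij}$ into the general variation formula, use linearity to separate the Ricci-flow contribution (then invoke the classical Hamilton computation yielding $\Delta R_{ijkl}$ plus the $B$-terms and the four Ricci--Riemann contractions, i.e.\ equation~(3.1)), and read off the $\tfrac12 h_{ij}$ contribution directly. The paper carries out exactly this substitution-and-split, so there is nothing to add.
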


In particular,

\begin{corollary} \label{c3.2}For {\rm GRF} we have
\begin{equation}
\frac{\partial}{\partial t}{\rm Rm}=\Delta{\rm Rm}+{\rm Rm}\ast{\rm
Rm}+H\ast H\ast{\rm Rm}+\sum^{2}_{i=0}
\nabla^{i}H\ast\nabla^{2-i}H.\label{3.3}
\end{equation}
\end{corollary}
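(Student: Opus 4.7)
The plan is to simply reorganize the right-hand side of Proposition 3.1 by grouping terms according to their schematic structure in the $\ast$ notation. First I would note that the curvature-only terms already produce the desired ${\rm Rm}\ast{\rm Rm}$ piece: each of the quadratic expressions
\[
B_{ijkl}=g^{pr}g^{qs}R_{piqj}R_{rksl},\qquad g^{pq}R_{pjkl}R_{qi},
\]
and their index permutations appearing in Proposition 3.1 is, by definition, a contraction of two Riemann tensors with the metric, so collectively they contribute exactly the $\Delta{\rm Rm}+{\rm Rm}\ast{\rm Rm}$ part of (3.3).

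Next I would handle the mixed curvature/$3$-form terms. Since $h_{ij}=H_{ikl}H_j{}^{kl}$, each expression $g^{pq}R_{ijkp}h_{ql}$ is a metric contraction of one ${\rm Rm}$ with two copies of $H$, hence is of the schematic form $H\ast H\ast{\rm Rm}$. Collecting the two such terms in Proposition 3.1 yields the $H\ast H\ast{\rm Rm}$ contribution in (3.3).

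The only nontrivial step is to convert
\[
-\nabla_{i}\nabla_{k}h_{jl}+\nabla_{i}\nabla_{l}h_{jk}+\nabla_{j}\nabla_{k}h_{il}-\nabla_{j}\nabla_{l}h_{ik}
\]
into $\sum_{i=0}^{2}\nabla^{i}H\ast\nabla^{2-i}H$. Because $\nabla g=0$, the covariant derivative commutes with raising indices, so applying the Leibniz rule twice to $h_{jl}=H_{jab}H_{l}{}^{ab}$ gives
\[
\nabla_{p}\nabla_{q}h_{jl}=(\nabla_{p}\nabla_{q}H)\ast H+\nabla H\ast\nabla H+H\ast(\nabla_{p}\nabla_{q}H),
\]
i.e., a sum of terms of the form $\nabla^{i}H\ast\nabla^{2-i}H$ for $i=0,1,2$. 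Summing the four such expressions in Proposition 3.1 leaves us with exactly $\sum_{i=0}^{2}\nabla^{i}H\ast\nabla^{2-i}H$ up to the universal constants absorbed into the $\ast$ symbol.

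There is no real obstacle here: the corollary is a purely notational consequence of Proposition 3.1, once one observes that $\nabla$ passes through the metric and that $h$ is quadratic in $H$. The only thing to be slightly careful about is the bookkeeping when expanding $\nabla^{2}(H\ast H)$ via Leibniz to ensure that all three degrees $i=0,1,2$ actually appear, which they do because the two factors are differentiated independently.
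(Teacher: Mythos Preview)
Your proposal is correct and follows essentially the same approach as the paper: both arguments reduce Proposition 3.1 to the schematic form $\Delta{\rm Rm}+{\rm Rm}\ast{\rm Rm}+h\ast{\rm Rm}+\nabla^{2}h$ (grouping the quadratic-in-curvature terms as ${\rm Rm}\ast{\rm Rm}$ and the $R_{ijkp}h_{ql}$ terms as $h\ast{\rm Rm}$), then expand $h=H\ast H$ and apply Leibniz to $\nabla^{2}(H\ast H)$ to obtain $\sum_{i=0}^{2}\nabla^{i}H\ast\nabla^{2-i}H$. The only difference is that the paper writes the intermediate $h$-form explicitly before expanding, while you go term by term; the content is the same.
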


\begin{proof} From above proposition, we obtain
\begin{equation*}
\frac{\partial}{\partial t}{\rm Rm}=\Delta{\rm Rm}+{\rm Rm}\ast{\rm
Rm}+\nabla^{2}h+h\ast{\rm Rm}.
\end{equation*}
On the other hand, $h=H\ast H$ and
\begin{equation*}
\nabla^{2} h=\nabla(\nabla(H\ast H))=\nabla(\nabla H\ast
H)=\nabla^{2}H\ast H+\nabla H\ast\nabla H.
\end{equation*}
Combining these terms, we obtain the result.
\end{proof}

\begin{proposition} \label{p3.3}For {\rm GRF} we have
\begin{eqnarray*}
\frac{\partial}{\partial t}R_{ik}&=&\Delta R_{ik}+2\langle
R_{piqk},R_{pq}\rangle-2\langle R_{pi},R_{pk}\rangle+\frac{1}{4}[\langle h_{\ell q},R_{i\ell kq}\rangle+\langle
R_{ip},h_{kp}\rangle]\\
&&+ \ \frac{1}{4}\left[-\nabla_{i}\nabla_{k}|H|^{2}+g^{j\ell}\nabla_{i}
\nabla_{\ell}h_{jk}+g^{j\ell}\nabla_{j}\nabla_{k}h_{i\ell}-\Delta
h_{ik}\right].
\end{eqnarray*}
\end{proposition}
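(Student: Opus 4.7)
The plan is to obtain the evolution equation for the Ricci tensor by tracing Proposition 3.1 with $g^{jl}$ and carefully accounting for the time-dependence of the metric. Since $R_{ik}=g^{jl}R_{ijkl}$, the product rule gives
\begin{equation*}
\frac{\partial}{\partial t}R_{ik}=\left(\frac{\partial}{\partial t}g^{jl}\right)R_{ijkl}+g^{jl}\frac{\partial}{\partial t}R_{ijkl},
\end{equation*}
and from the first equation of GRF,
\begin{equation*}
\frac{\partial}{\partial t}g^{jl}=-g^{jp}g^{lq}\left(-2R_{pq}+\tfrac{1}{2}h_{pq}\right)=2R^{jl}-\tfrac{1}{2}h^{jl}.
\end{equation*}
So the extra $(\partial_t g^{jl})R_{ijkl}$ contribution produces a $\mathrm{Rm}\ast\mathrm{Ric}$ term and an $\mathrm{Rm}\ast h$ term, which must be combined with the corresponding contractions of the right-hand side of Proposition 3.1.

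Next I would split the contracted formula into four groups. The first group is the Laplacian piece: since $\nabla g=0$, one has $g^{jl}\Delta R_{ijkl}=\Delta(g^{jl}R_{ijkl})=\Delta R_{ik}$. The second group consists of the purely curvature quadratic terms, namely $g^{jl}$ applied to $2(B_{ijkl}-B_{ijlk}-B_{iljk}+B_{ikjl})$ and to the four $R\cdot R$ terms, together with the leftover $2R^{jl}R_{ijkl}$ coming from $\partial_t g^{jl}$. These simplify exactly as in Hamilton's classical derivation for Ricci flow (using the algebraic Bianchi identity and the symmetries of $\mathrm{Rm}$) to yield the two terms $2\langle R_{piqk},R_{pq}\rangle-2\langle R_{pi},R_{pk}\rangle$.

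The third and fourth groups are the genuinely new terms involving $h$. For the $\nabla^{2}h$ contribution,
\begin{equation*}
\tfrac{1}{4}g^{jl}\bigl[-\nabla_{i}\nabla_{k}h_{jl}+\nabla_{i}\nabla_{l}h_{jk}+\nabla_{j}\nabla_{k}h_{il}-\nabla_{j}\nabla_{l}h_{ik}\bigr],
\end{equation*}
I would use $g^{jl}h_{jl}=|H|^{2}$ on the first term and $g^{jl}\nabla_{j}\nabla_{l}=\Delta$ on the last, leaving the two mixed contractions which appear unchanged in Proposition 3.2. For the algebraic $\mathrm{Rm}\ast h$ contribution $\tfrac{1}{4}g^{jl}g^{pq}[R_{ijkp}h_{ql}+R_{ijpl}h_{qk}]$, I would combine this with the residual $-\tfrac{1}{2}h^{jl}R_{ijkl}$ from $\partial_t g^{jl}$; using $g^{jl}R_{ijpl}=R_{ip}$ in the second term and relabeling indices in the first, the two pieces consolidate into $\tfrac{1}{4}[\langle h_{lq},R_{ilkq}\rangle+\langle R_{ip},h_{kp}\rangle]$.

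The calculation is entirely mechanical, but the main obstacle is bookkeeping: one must track the interplay between the $(\partial_t g^{jl})R_{ijkl}$ terms and the various contracted pieces so that the coefficients match, and one must repeatedly invoke symmetries of $\mathrm{Rm}$ (including the algebraic Bianchi identity) to merge the $B$-terms and the $R\cdot R$-terms into the compact form stated. A useful sanity check along the way is that setting $H\equiv 0$ should reproduce the standard Ricci-flow evolution of $R_{ik}$.
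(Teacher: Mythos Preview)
Your approach is precisely the paper's: contract Proposition~3.1 with $g^{jl}$, add the $(\partial_t g^{jl})R_{ijkl}$ correction, and simplify using $g^{jl}h_{jl}=|H|^{2}$ and $g^{jl}R_{ijpl}=R_{ip}$. You are in fact more explicit than the paper, whose first displayed line records only the $2R^{jl}R_{ijkl}$ part of $(\partial_t g^{jl})R_{ijkl}$ and silently drops the $-\tfrac{1}{2}h^{jl}R_{ijkl}$ piece you correctly flag. One bookkeeping caution on your final step: carrying that residual through gives $\tfrac{1}{4}h^{jp}R_{ijkp}-\tfrac{1}{2}h^{jl}R_{ijkl}=-\tfrac{1}{4}\langle h_{lq},R_{ilkq}\rangle$, not $+\tfrac{1}{4}$, so the ``consolidation'' you describe does not actually land on the coefficient printed in the proposition; the sign there is consistent with the paper's own omission of that same term.
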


\begin{proof} Since
\begin{equation*}
\frac{\partial}{\partial t}R_{ik}=g^{j\ell}\frac{\partial}{\partial
t}R_{ijk\ell}+2g^{jp}g^{\ell q}R_{ijk\ell}R_{pq},
\end{equation*}
and
\begin{equation*}
g^{ij}h_{ij}=g^{ij}H_{ipq}H_{j}{}^{pq}=g^{ij}g^{pr}g^{qs}H_{ipq}H_{jrs}=|H|^{2},
\end{equation*}
it follows that
\begin{eqnarray*}
&
&g^{j\ell}[-\nabla_{i}\nabla_{k}h_{j\ell}+\nabla_{i}\nabla_{\ell}
h_{jk}+\nabla_{j}\nabla_{k}h_{i\ell}
\\
&&- \ \nabla_{j}\nabla_{\ell}h_{ik}+g^{pq}h_{q\ell}R_{ijkp}
+g^{pq}h_{qk}R_{ijp\ell}]
\\
&=&-\nabla_{i}\nabla_{k}|H|^{2}+g^{j\ell}\nabla_{i}\nabla_{\ell}
h_{jk}+g^{j\ell}\nabla_{j}\nabla_{k}h_{i\ell}
\\
&&- \ \Delta h_{ik}+g^{j\ell}g^{pq}h_{q\ell}R_{ijkp}+g^{pq}h_{qk}R_{ip}.
\end{eqnarray*}
From these identities, we get the result.
\end{proof}

As a consequence, we obtain the evolution equation for scalar
curvature.

\begin{proposition}\label{p3.4} For {\rm GRF} we have
\begin{equation*}
\frac{\partial}{\partial t}R=\Delta
R+2|{\rm Ric}|^{2}-\frac{1}{2}\Delta|H|^{2}+\frac{1}{2}\langle
h_{ij},R_{ij}\rangle+\frac{1}{2}g^{ik}g^{j\ell}\nabla_{i}\nabla_{j}h_{k\ell}.
\end{equation*}
\end{proposition}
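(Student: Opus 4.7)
The plan is to differentiate $R=g^{ij}R_{ij}$ in time and feed in Proposition~3.2, since the scalar curvature is just the full trace of the Ricci tensor, and all the work is already encoded in the evolution of $R_{ij}$. Concretely, I would write
\[
\frac{\partial R}{\partial t} \;=\; \Bigl(\frac{\partial g^{ij}}{\partial t}\Bigr)R_{ij} \;+\; g^{ij}\frac{\partial R_{ij}}{\partial t},
\]
and compute each piece separately.

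For the first piece I would use $\partial_t g_{pq}=-2R_{pq}+\tfrac12 h_{pq}$ together with the standard identity $\partial_t g^{ij}=-g^{ip}g^{jq}\partial_t g_{pq}$ to obtain $\partial_t g^{ij}=2R^{ij}-\tfrac12 h^{ij}$, so that this term contributes $2|R|^{2}-\tfrac12\langle h_{ij},R_{ij}\rangle$, exactly as in the Ricci-flow case but with the additional $h$-correction. For the second piece I would contract each term of Proposition~3.2 with $g^{ik}$, using: (i) $\nabla g=0$, which gives $g^{ik}\Delta R_{ik}=\Delta R$ and $g^{ik}\Delta h_{ik}=\Delta|H|^{2}$; (ii) the standard trace identity $g^{ac}R_{abcd}=R_{bd}$, which, after a pair-swap in the Riemann symmetries, turns $g^{ik}R_{piqk}$ and $g^{ik}R_{ilkq}$ into Ricci tensors and produces $2|R|^{2}-2|R|^{2}$ from the Ricci-quadratic block and $\tfrac14\langle h,R\rangle+\tfrac14\langle h,R\rangle$ from the two $h$--Rm couplings; (iii) the symmetry $h_{ij}=h_{ji}$, which, after renaming dummy indices, shows that both mixed Hessian terms $g^{ik}g^{jl}\nabla_i\nabla_l h_{jk}$ and $g^{ik}g^{jl}\nabla_j\nabla_k h_{il}$ coincide with $g^{ik}g^{jl}\nabla_i\nabla_j h_{kl}$, combining to the factor $\tfrac12$ in the claimed formula. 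Summing the pieces yields the stated evolution equation.

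The only real obstacle is bookkeeping: one must carefully verify that each trace $g^{ik}R_{\cdot i\cdot k}$ collapses to the right Ricci tensor (this requires using the pair-swap symmetry $R_{abcd}=R_{cdab}$ before applying $g^{bd}R_{abcd}=R_{ac}$), and one must check that the two Hessian-of-$h$ terms really can be written as the same symmetric second divergence by renaming of dummy indices together with the symmetry of $h$. Once these index identifications are in place, the proof is just arithmetic: the quadratic Ricci terms conspire to give $2|R|^{2}$ as in classical Ricci flow, the two Laplacian contributions to $|H|^{2}$ combine into $-\tfrac12\Delta|H|^{2}$, the mixed Hessian contributions combine into $\tfrac12 g^{ik}g^{jl}\nabla_i\nabla_j h_{kl}$, and the $\langle h,R\rangle$ contributions from $g^{ij}\partial_{t}R_{ij}$ together with the one from $(\partial_{t}g^{ij})R_{ij}$ leave the single term $\tfrac12\langle h_{ij},R_{ij}\rangle$.
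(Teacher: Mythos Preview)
Your strategy --- apply the product rule to $R=g^{ij}R_{ij}$ and substitute the Ricci evolution from Proposition~3.3 --- is the same idea as the paper's. The paper also traces Proposition~3.3, but it packages the pure Ricci-flow contribution by quoting ``the usual evolution equation for scalar curvature under the Ricci flow'' (i.e.\ $\partial_t R=\Delta R+2|R|^{2}$) and then only appends the traced $h$-terms from Proposition~3.3.

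There is, however, a bookkeeping inconsistency in your final tally of the $\langle h,R\rangle$ contributions. You correctly record that $(\partial_t g^{ij})R_{ij}=2|R|^{2}-\tfrac12\langle h,R\rangle$, and you correctly extract $\tfrac14\langle h,R\rangle+\tfrac14\langle h,R\rangle=+\tfrac12\langle h,R\rangle$ from tracing the two $h$--${\rm Rm}$ couplings in Proposition~3.3. These two contributions sum to $0$, not to the $+\tfrac12\langle h,R\rangle$ you announce at the end. The reason the paper lands on $+\tfrac12$ is that its Ricci-flow shortcut absorbs only the $2R^{ij}R_{ij}$ piece of $(\partial_t g^{ij})R_{ij}$ into the classical $2|R|^{2}$ and never introduces the extra $-\tfrac12 h^{ij}R_{ij}$ coming from the $h$-part of $\partial_t g^{ij}$. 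So your more explicit product-rule computation and the paper's shortcut genuinely disagree on this coefficient; carried through exactly as you describe, your argument does not reproduce the stated formula, and you should revisit either the sign accounting or the comparison with the paper's derivation.
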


\begin{proof} From the usual evolution equation for scalar curvature
under the Ricci flow, we have
\begin{eqnarray*}
\frac{\partial}{\partial t}R&=&\Delta
R+2|{\rm Ric}|^{2}+\frac{1}{4}g^{ik}[\langle h_{\ell q},R_{i\ell kq}\rangle+\langle
R_{ip},h_{kp}\rangle] \\
&&+ \ \frac{1}{4}g^{ik}\left(-\nabla_{i}\nabla_{k}|H|^{2}+g^{j\ell}
\nabla_{i}\nabla_{\ell}h_{jk}+g^{j\ell}\nabla_{j}\nabla_{k}h_{i\ell}-\Delta
h_{ik}\right) \\
&=&\Delta R+2|{\rm Ric}|^{2}+\frac{1}{4}\langle
h_{ij},R_{ij}\rangle+\frac{1}{4}\langle R_{ip},h_{ip}\rangle \\
&&- \ \frac{1}{4}\Delta|H|^{2}+\frac{1}{4}g^{ik}g^{j\ell}\nabla_{i}
\nabla_{\ell}h_{jk}+\frac{1}{4}g^{ik}g^{j\ell}\nabla_{j}\nabla_{k}h_{i\ell}
-\frac{1}{4}\Delta|H|^{2}.
\end{eqnarray*}
Simplifying the terms, we obtain the required result.
\end{proof}

%%%%%%%%%%%%%%%%%%%%%%%%%%%%%%%%%%%%%%%%%%%%%%%%%%%%%%%%%%%%%%%%%%%%%%%%%%%%%%
\section{Derivative estimates}\label{4}
%%%%%%%%%%%%%%%%%%%%%%%%%%%%%%%%%%%%%%%%%%%%%%%%%%%%%%%%%%%%%%%%%%%%%%%%%%%%%%

In this section we are going to prove BBS estimates. At first we
review several basic identities of commutators $[\Delta, \nabla]$
and $[\frac{\partial}{\partial t}, \nabla]$. If $A=A(t)$ is a
$t$-dependency tensor, and $\frac{\partial g_{ij}}{\partial
t}=v_{ij}$, then applying the well-known formulas stated in \cite{CK} on {\rm GRF} we have
\begin{eqnarray}
\frac{\partial}{\partial t}\nabla{\rm
Rm}&=&\nabla\frac{\partial}{\partial t}{\rm Rm}+{\rm
Rm}\ast\nabla({\rm Rm}+H\ast H) \nonumber\\
&=&\nabla(\Delta{\rm Rm}+{\rm Rm}\ast{\rm Rm}+H\ast H\ast{\rm
Rm}+\nabla^{2}H\ast H+\nabla H\ast\nabla H) \nonumber\\
&&+ \ {\rm Rm}\ast\nabla{\rm Rm}+H\ast\nabla H\ast{\rm Rm}\label{4.1} \\
&=&\Delta(\nabla{\rm Rm})+\sum_{i+j=1}\nabla^{i}{\rm
Rm}\ast\nabla^{j}{\rm Rm} \nonumber\\
&&+ \ \sum_{i+j+k=1}\nabla^{i}H\ast\nabla^{j}H\ast\nabla^{k}{\rm Rm}+\sum_{i+j=1+2}\nabla^{i}H\ast\nabla^{j}H.\nonumber
\end{eqnarray}

More generally, we have

\begin{proposition} \label{p4.1}For {\rm GRF} and any nonnegative
integer $\ell$ we have
\begin{eqnarray}
\frac{\partial}{\partial t}\nabla^{\ell}{\rm
Rm}&=&\Delta(\nabla^{\ell}{\rm Rm})+\sum_{i+j=\ell}\nabla^{i}{\rm
Rm}\ast\nabla^{j}{\rm Rm}
\nonumber\\
&&+ \ \sum_{i+j+k=\ell}\nabla^{i}H\ast\nabla^{j}H\ast\nabla^{k}{\rm Rm}+\sum_{i+j=\ell+2}\nabla^{i}H\ast\nabla^{j}H.\label{4.2}
\end{eqnarray}
\end{proposition}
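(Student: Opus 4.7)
The plan is to prove Proposition~4.1 by induction on $l$, using the base case provided by Corollary~3.2 together with the two commutator identities in (4.1). The $\ast$ notation absorbs coefficients and index contractions, so the job reduces to bookkeeping: every term produced by applying $\nabla$ or by commuting $\Delta$ past $\nabla$ should land in one of the three sums on the right-hand side.

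For the base case $l=0$, the identity is exactly Corollary~3.2, noting that the $\sum_{i+j=0}\nabla^i\mathrm{Rm}\ast\nabla^j\mathrm{Rm}$ term captures $\mathrm{Rm}\ast\mathrm{Rm}$, the triple product $\sum_{i+j+k=0}\nabla^iH\ast\nabla^jH\ast\nabla^k\mathrm{Rm}$ captures $H\ast H\ast\mathrm{Rm}$, and the final sum captures $\sum_{i+j=2}\nabla^iH\ast\nabla^jH$. For the inductive step, I would assume the formula for $l$ and compute $\frac{\partial}{\partial t}\nabla^{l+1}\mathrm{Rm}=\frac{\partial}{\partial t}\nabla(\nabla^l\mathrm{Rm})$ using the first commutator in (4.1). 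Since $\frac{\partial g}{\partial t}=v$ with $v=\mathrm{Rm}+H\ast H$ schematically, we have $\nabla v = \nabla\mathrm{Rm}+H\ast\nabla H$, so the commutator contributes $\nabla^l\mathrm{Rm}\ast\nabla\mathrm{Rm}+\nabla^l\mathrm{Rm}\ast H\ast\nabla H$, both of which fit into the first and second sums indexed by $l+1$.

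Next I would apply $\nabla$ to the inductive hypothesis term by term. The Leibniz rule distributes $\nabla$ over each $\ast$-product and raises the total order of derivatives by one, so every resulting monomial still has the correct index-sum ($l+1$ or $l+3$, respectively). The one subtlety is the Laplacian term: using the second commutator in (4.1),
\begin{equation*}
\nabla(\Delta\nabla^l\mathrm{Rm})=\Delta(\nabla^{l+1}\mathrm{Rm})+\nabla\mathrm{Rm}\ast\nabla^l\mathrm{Rm}+\mathrm{Rm}\ast\nabla^{l+1}\mathrm{Rm},
\end{equation*}
and the last two correction terms fit into $\sum_{i+j=l+1}\nabla^i\mathrm{Rm}\ast\nabla^j\mathrm{Rm}$. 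Collecting everything, each monomial produced is of one of the three allowed shapes with the correct total order, which closes the induction.

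The only point that requires real attention, rather than being routine bookkeeping, is the \emph{index accounting}: one must check that every Leibniz product $\nabla(A\ast B)=\nabla A\ast B+A\ast\nabla B$ preserves the invariant ``total number of $\nabla$'s equals $l$ (resp.~$l+2$) in that sum,'' so that after the single application of $\nabla$ the new invariant becomes $l+1$ (resp.~$l+3$). Since $\ast$ suppresses numerical constants, there is no cancellation worry, and no geometric obstacle appears beyond the two commutator formulas already recorded in (4.1); thus I do not expect any genuine difficulty in the induction step.
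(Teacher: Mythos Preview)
Your proposal is correct and follows essentially the same inductive argument as the paper: use the two commutator identities in (4.1) together with the Leibniz rule to pass from $l$ to $l+1$, checking that every resulting term falls into one of the three schematic sums. The only cosmetic difference is that the paper takes $l=1$ (computed just before the proposition) rather than $l=0$ as the base case, but this is immaterial.
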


\begin{proof} For $\ell=1$, we have proved before the proposition.
Suppose that the formula holds for $1,\cdots,\ell$. By induction to
$\ell$, for $\ell+1$ we have

\begin{eqnarray*}
\frac{\partial}{\partial t}\nabla^{\ell+1}{\rm
Rm}&=&\frac{\partial}{\partial t}\nabla(\nabla^{\ell}{\rm Rm})\\
&=&\nabla\frac{\partial}{\partial t}(\nabla^{\ell}{\rm
Rm})+\nabla^{\ell}{\rm
Rm}\ast\nabla({\rm Rm}+H\ast H) \\
&=&\nabla\left[\Delta(\nabla^{\ell}{\rm Rm})+\sum_{i+j=\ell}\nabla^{i}{\rm
Rm}\ast\nabla^{j}{\rm Rm}\right. \\
&&+ \ \left.\sum_{i+j+k=\ell}\nabla^{i}H\ast\nabla^{j}H\ast\nabla^{k}{\rm
Rm}+\sum_{i+j=\ell+2}\nabla^{i}H\ast\nabla^{j}H\right] \\
&&+ \ \nabla^{\ell}{\rm Rm}\ast\nabla{\rm Rm}+H\ast\nabla
H\ast\nabla^{\ell}{\rm Rm} \\
&=&\Delta(\nabla^{\ell+1}{\rm Rm})+\nabla{\rm Rm}\ast\nabla^{\ell}{\rm
Rm}+{\rm Rm}\ast\nabla^{\ell+1}{\rm Rm} \\
&&+ \ \sum_{i+j=\ell}\left(\nabla^{i+1}{\rm Rm}\ast\nabla^{j}{\rm
Rm}+\nabla^{i}{\rm Rm}\ast\nabla^{j+1}{\rm Rm}\right) \\
&&+ \ \sum_{i+j+k=\ell}\left(\nabla^{i+1}H\ast\nabla^{j}H\ast\nabla^{k}{\rm
Rm}\right. \\
&&+ \ \left.\nabla^{i}H\ast\nabla^{j+1}H\ast\nabla^{k}{\rm
Rm}+\nabla^{i}H\ast\nabla^{j}H\ast\nabla^{k+1}{\rm Rm}\right) \\
&&+ \ \sum_{i+j=\ell+2}(\nabla^{i+1}H\ast\nabla^{j}H+\nabla^{i}H\ast\nabla^{j+1}H)
\\
&&+ \ H\ast\nabla H\ast\nabla^{l}{\rm Rm}.
\end{eqnarray*}
Simplifying these terms, we obtain the required result.
\end{proof}

As an immediate consequence, we have an evolution inequality for
$|\nabla^{l}{\rm Rm}|^{2}$.

\begin{corollary} \label{c4.2}For {\rm GRF} and any nonnegative integer $\ell$ we have
\begin{eqnarray}
\frac{\partial}{\partial t}|\nabla^{\ell}{\rm
Rm}|^{2}&\leq&\Delta|\nabla^{l}{\rm Rm}|^{2} \ - \
2|\nabla^{\ell+1}{\rm Rm}|^{2}\nonumber\\
&&+ \ C\cdot\sum_{i+j=\ell}|\nabla^{i}{\rm Rm}|\cdot|\nabla^{j}{\rm
Rm}|\cdot|\nabla^{\ell}{\rm Rm}|\nonumber \\
&&+ \ C\cdot\sum_{i+j+k=\ell}|\nabla^{i}H|\cdot|\nabla^{j}H|\cdot|\nabla^{k}{\rm
Rm}|\cdot|\nabla^{\ell}{\rm Rm}|\nonumber \\
&&+ \ C\cdot\sum_{i+j=\ell+2}|\nabla^{i}H|\cdot|\nabla^{j}H
|\cdot|\nabla^{\ell}{\rm
Rm}|,\label{4.3}
\end{eqnarray}
where $C$ are universal constants depending only on the dimension of
$M$.
\end{corollary}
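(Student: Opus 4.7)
The plan is to deduce the inequality directly from Proposition 4.1 by differentiating $|\nabla^{l}\mathrm{Rm}|^{2}$ in time and using the Bochner identity together with Cauchy--Schwarz. The calculation is essentially routine, but care has to be taken with three separate sources of terms: the time derivative of $\nabla^{l}\mathrm{Rm}$ itself, the Laplacian that has to be turned into a Laplacian of the scalar $|\nabla^{l}\mathrm{Rm}|^{2}$, and the variation of the metric coefficients hidden inside the inner product.

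First I would write
\begin{equation*}
\frac{\partial}{\partial t}|\nabla^{l}\mathrm{Rm}|^{2}
\;=\;2\Bigl\langle\nabla^{l}\mathrm{Rm},\,\tfrac{\partial}{\partial t}\nabla^{l}\mathrm{Rm}\Bigr\rangle+\bigl(\text{terms from }\partial_{t}g\bigr),
\end{equation*}
where the last piece arises because the inner product $\langle\cdot,\cdot\rangle$ contracts with multiple copies of $g^{ij}$. Since $\partial_{t}g_{ij}=-2R_{ij}+\tfrac{1}{2}h_{ij}$ and $h=H\ast H$, each such term has the schematic form $\mathrm{Rm}\ast\nabla^{l}\mathrm{Rm}\ast\nabla^{l}\mathrm{Rm}$ or $H\ast H\ast\nabla^{l}\mathrm{Rm}\ast\nabla^{l}\mathrm{Rm}$, both of which are already contained in the $i+j=l$ sum and the $i+j+k=l$ sum (with $i=j=0$, $k=l$) on the right-hand side of the target inequality, so they can be absorbed into those sums.

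Next I would substitute the formula of Proposition 4.1 into the inner product. The Laplacian term contributes $2\langle\nabla^{l}\mathrm{Rm},\Delta\nabla^{l}\mathrm{Rm}\rangle$, which by the standard Bochner-type identity equals $\Delta|\nabla^{l}\mathrm{Rm}|^{2}-2|\nabla^{l+1}\mathrm{Rm}|^{2}$; this produces the good negative term $-2|\nabla^{l+1}\mathrm{Rm}|^{2}$. The remaining three sums from Proposition 4.1, once paired with $\nabla^{l}\mathrm{Rm}$ in an inner product, are bounded in absolute value by Cauchy--Schwarz:
\begin{align*}
\bigl|\langle\nabla^{l}\mathrm{Rm},\nabla^{i}\mathrm{Rm}\ast\nabla^{j}\mathrm{Rm}\rangle\bigr|
&\;\leq\;C\,|\nabla^{i}\mathrm{Rm}|\,|\nabla^{j}\mathrm{Rm}|\,|\nabla^{l}\mathrm{Rm}|,\\
\bigl|\langle\nabla^{l}\mathrm{Rm},\nabla^{i}H\ast\nabla^{j}H\ast\nabla^{k}\mathrm{Rm}\rangle\bigr|
&\;\leq\;C\,|\nabla^{i}H|\,|\nabla^{j}H|\,|\nabla^{k}\mathrm{Rm}|\,|\nabla^{l}\mathrm{Rm}|,\\
\bigl|\langle\nabla^{l}\mathrm{Rm},\nabla^{i}H\ast\nabla^{j}H\rangle\bigr|
&\;\leq\;C\,|\nabla^{i}H|\,|\nabla^{j}H|\,|\nabla^{l}\mathrm{Rm}|,
\end{align*}
with constants $C$ depending only on the dimension (they count the number of index contractions hidden in the $\ast$ notation).

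Collecting the three estimates with the Laplacian term and the absorbed metric-variation contributions yields precisely (4.3). There is no real obstacle here; the only point requiring a bit of attention is checking that the extra terms produced by $\partial_{t}g$ do not exceed, in the sense of the schematic $\ast$ calculus, the families of terms already appearing in Proposition 4.1, so that no new index ranges need to be added in the sums. Once that bookkeeping is done, the inequality follows.
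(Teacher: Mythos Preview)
Your proposal is correct and matches the paper's approach: the paper simply states this corollary as ``an immediate consequence'' of Proposition 4.1 without giving any details, and what you have written is exactly the standard computation that fills in that gap. Your extra care in tracking the metric-variation terms coming from $\partial_{t}g^{ij}$ and observing that they are absorbed into the existing sums is accurate and, if anything, more thorough than what the paper records.
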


Next we derive the evolution equations for the covariant derivatives
of $H$.

\begin{proposition} \label{p4.3}For {\rm GRF} and any positive integer $\ell$ we have
\begin{equation}
\frac{\partial}{\partial
t}\nabla^{\ell}H=\Delta(\nabla^{\ell}H)+\sum_{i+j=\ell}
\nabla^{i}H\ast\nabla^{j}{\rm
Rm}+\sum_{i+j+k=\ell}\nabla^{i}H\ast\nabla^{j}H\ast\nabla^{k}H.\label{4.4}
\end{equation}
\end{proposition}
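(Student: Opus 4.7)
The plan is to mimic exactly the strategy used for Proposition 4.1, doing induction on $l$ and relying on the two commutator identities of (4.1).

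For the base case $l=0$, I recall from the proof of Theorem 2.2 that the Bochner (Weitzenböck) identity gives $\square_{g(t)}H = \Delta H + {\rm Rm}\ast H$, so
\begin{equation*}
\frac{\partial}{\partial t}H = \Delta H + {\rm Rm}\ast H,
\end{equation*}
which is the $l=0$ version of the claim (the cubic-in-$H$ sum is empty, and the single linear term ${\rm Rm}\ast H$ matches $\nabla^0 H \ast \nabla^0 {\rm Rm}$). For the inductive step, assume the identity for $l$ and write $\nabla^{l+1}H = \nabla(\nabla^l H)$. The plan is to compute $\partial_t \nabla^{l+1}H$ by first commuting $\partial_t$ past $\nabla$ via
\begin{equation*}
\frac{\partial}{\partial t}\nabla A - \nabla\frac{\partial}{\partial t}A = A \ast \nabla v,
\end{equation*}
where $v = \partial_t g = -2{\rm Ric} + \tfrac{1}{2}h$ and hence $\nabla v$ is schematically $\nabla {\rm Rm} + H \ast \nabla H$, then substituting the induction hypothesis for $\partial_t \nabla^l H$ and pushing one more $\nabla$ onto each summand.

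The bookkeeping then proceeds in three familiar pieces. The Laplacian term $\nabla\Delta(\nabla^l H)$ becomes $\Delta(\nabla^{l+1}H)$ up to commutator errors $\nabla{\rm Rm}\ast\nabla^l H + {\rm Rm}\ast \nabla^{l+1}H$, which are already of the admissible form $\sum_{i+j=l+1}\nabla^i H \ast \nabla^j {\rm Rm}$. Differentiating the sum $\sum_{i+j=l}\nabla^i H \ast \nabla^j {\rm Rm}$ via the Leibniz rule redistributes indices to give $\sum_{i+j=l+1}\nabla^i H \ast \nabla^j {\rm Rm}$. Differentiating the cubic sum $\sum_{i+j+k=l}\nabla^i H \ast \nabla^j H \ast \nabla^k H$ likewise yields $\sum_{i+j+k=l+1}\nabla^i H \ast \nabla^j H \ast \nabla^k H$. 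Finally the commutator correction $\nabla^l H \ast \nabla v = \nabla^l H \ast (\nabla {\rm Rm} + H \ast \nabla H)$ contributes one more term to each of the two sums.

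The main obstacle, and really the only one, is simply verifying that no extra schematic term of a new type is generated — in particular, that one never produces a term purely quadratic in $H$ without a curvature factor, as the $l=0$ case has no such term and induction should preserve this. This is confirmed by noting that $v_{ij}$ itself is schematically ${\rm Rm} + H \ast H$, so every $\nabla v$ insertion carries either a $\nabla {\rm Rm}$ or an $H \ast \nabla H$, both of which are absorbed into the two sums in the statement. Collecting terms and relabeling indices then yields exactly the claimed identity with $l$ replaced by $l+1$, completing the induction.
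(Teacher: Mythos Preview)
Your proof is correct and follows essentially the same approach as the paper's: both start from the Bochner formula $\partial_t H = \Delta H + {\rm Rm}\ast H$ and induct via the commutators (4.1) with $v = -2{\rm Ric} + \tfrac{1}{2}H\ast H$, so that $\nabla v = \nabla{\rm Rm} + H\ast\nabla H$. The paper writes out the $l=1$ step explicitly and then defers the higher cases to ``the same argument'', which is precisely the Leibniz--commutator bookkeeping you spell out; one small inaccuracy is that for $l=0$ the cubic sum $\sum_{i+j+k=0}$ is not literally empty but consists of the single term $H\ast H\ast H$, which is however zero in the $\ast$-schematic sense, so nothing is affected.
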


\begin{proof} From the Bochner formula, the evolution equation for
$H$ can be rewritten as
\begin{equation}
\frac{\partial}{\partial t}H=\Delta H+{\rm Rm}\ast H.\label{4.5}
\end{equation}
For $\ell=1$, we have
\begin{eqnarray*}
\frac{\partial}{\partial t}\nabla H&=&\nabla\frac{\partial}{\partial
t}H+H\ast\nabla({\rm Rm}+H\ast H) \\
&=&\nabla(\Delta H+{\rm Rm}\ast H)+H\ast\nabla{\rm Rm}+H\ast
H\ast\nabla H \\
&=&\nabla(\Delta H)+H\ast\nabla{\rm Rm}+\nabla H\ast{\rm Rm}+H\ast
H\ast\nabla H \\
&=&\Delta(\nabla H)+\nabla{\rm Rm}\ast H+\nabla H\ast{\rm Rm}+H\ast
H\ast\nabla H.
\end{eqnarray*}
Using (\ref{4.2}) and the same argument, we can prove the evolution
equation for higher covariant derivatives.
\end{proof}

Similarly, we have an evolution inequality for $|\nabla^{\ell}H|^{2}$.

\begin{corollary} \label{c4.4}For {\rm GRF} and for any positive integer $l$ we have
\begin{eqnarray}
\frac{\partial}{\partial
t}|\nabla^{l}H|^{2}&\leq&\Delta|\nabla^{\ell}H|^{2}-2|\nabla^{\ell+1}H|^{2}
\nonumber \\
&&+ \ C\cdot\sum_{i+j=\ell}|\nabla^{i}H|\cdot|\nabla^{j}{\rm
Rm}|\cdot|\nabla^{\ell}H|\label{4.6}\\
&&+ \ C\cdot\sum_{i+j+k=\ell}|\nabla^{i}H|\cdot|\nabla^{j}
H|\cdot|\nabla^{k}H|\cdot|\nabla^{l}H|,\nonumber
\end{eqnarray}
while
\begin{equation}
\frac{\partial}{\partial t}|H|^{2}\leq\Delta|H|^{2}-2|\nabla
H|^{2}+C\cdot|{\rm Rm}|\cdot|H|^{2}.\label{4.7}
\end{equation}
\end{corollary}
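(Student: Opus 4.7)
The plan is to derive the two inequalities as pointwise consequences of the evolution equation for $\nabla^l H$ established in Proposition 4.3 (together with the equation $\partial_t H = \Delta H + \mathrm{Rm}\ast H$ for the base case $l=0$), combined with the standard Bochner-type identity for the norm of a tensor.

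First I would record the product-rule identity
\begin{equation*}
\frac{\partial}{\partial t}|\nabla^{l}H|^{2}
=2\Bigl\langle \nabla^{l}H,\frac{\partial}{\partial t}\nabla^{l}H\Bigr\rangle
+\bigl(\partial_{t}g^{-1}\bigr)\ast\nabla^{l}H\ast\nabla^{l}H,
\end{equation*}
where the second piece comes from differentiating the inverse metrics used to contract the two copies of $\nabla^{l}H$. Since $\partial_{t}g^{ij}=2R^{ij}-\tfrac{1}{2}h^{ij}$ under GRF, this extra piece is schematically $\mathrm{Rm}\ast\nabla^{l}H\ast\nabla^{l}H + H\ast H\ast\nabla^{l}H\ast\nabla^{l}H$, and these two contributions are already of the form tabulated in the sums $\sum_{i+j=l}\nabla^{i}H\ast\nabla^{j}\mathrm{Rm}\ast\nabla^{l}H$ (take $i=l,j=0$) and $\sum_{i+j+k=l}\nabla^{i}H\ast\nabla^{j}H\ast\nabla^{k}H\ast\nabla^{l}H$ (take $i=j=0,k=l$), so they will be absorbed into the asserted sums.

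Next I would plug in Proposition 4.3 for $\partial_{t}\nabla^{l}H$. The Laplacian term contributes $2\langle\nabla^{l}H,\Delta\nabla^{l}H\rangle$, which I rewrite via the standard Bochner-type identity
\begin{equation*}
\Delta|\nabla^{l}H|^{2}=2\langle\nabla^{l}H,\Delta\nabla^{l}H\rangle+2|\nabla^{l+1}H|^{2},
\end{equation*}
valid for any tensor, producing the leading parabolic piece $\Delta|\nabla^{l}H|^{2}-2|\nabla^{l+1}H|^{2}$. The remaining schematic products from Proposition 4.3 get paired against $\nabla^{l}H$ and estimated by Cauchy--Schwarz: any contraction of tensors $A$ and $B$ satisfies $|\langle A,B\rangle|\le C|A||B|$ with a constant $C$ depending only on $n$ and the combinatorics of the $\ast$-products. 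This yields the three asserted sums on the right-hand side.

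For the base case $l=0$, I would argue directly: using $\partial_{t}H=\Delta H+\mathrm{Rm}\ast H$, the same calculation gives
\begin{equation*}
\frac{\partial}{\partial t}|H|^{2}\le \Delta|H|^{2}-2|\nabla H|^{2}+C|\mathrm{Rm}||H|^{2}+C|H|^{4},
\end{equation*}
where the last term arises from the $H\ast H$ factor in $\partial_{t}g^{-1}$. Since $|H|^{4}=|H|^{2}\cdot|H|^{2}$ can be absorbed into $C|\mathrm{Rm}||H|^{2}$ by redefining $C$ when $|H|$ is controlled in terms of $|\mathrm{Rm}|$ (or, more cleanly, by bundling $H\ast H$ with $\mathrm{Rm}$ in the schematic $\ast$-notation since both are treated as curvature-type quantities), we obtain the stated inequality. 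The main bookkeeping obstacle throughout is verifying that every new term produced by the metric derivatives $\partial_{t}g^{-1}$ really does land in one of the three index-constrained sums on the right; once this is checked for $l=1$ (as was essentially done in the proposition preceding Proposition 4.1), the inductive structure handles the general case.
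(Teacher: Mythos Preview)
Your argument for $l\geq 1$ is correct and is exactly what the paper (implicitly) does: pair Proposition~4.3 with the Bochner identity and absorb the metric-variation terms into the indicated sums.

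The gap is in the $l=0$ case. You arrive at an extra $C|H|^{4}$ and then offer two ways to dispose of it, but neither produces the inequality (4.7) as stated. You cannot absorb $|H|^{4}$ into $C|\mathrm{Rm}|\,|H|^{2}$ without already knowing $|H|^{2}\lesssim|\mathrm{Rm}|$, which is not assumed; and ``bundling $H\ast H$ with $\mathrm{Rm}$'' only yields the weaker bound with $(|\mathrm{Rm}|+|H|^{2})|H|^{2}$ on the right, not $|\mathrm{Rm}|\,|H|^{2}$. What you are missing is a sign. Writing $h_{ia}=H_{ijk}H_{a}{}^{jk}$, the metric-variation contribution to $\partial_{t}|H|^{2}$ is exactly
\[
3(\partial_{t}g^{ia})h_{ia}=3\Bigl(2R^{ia}-\tfrac{1}{2}h^{ia}\Bigr)h_{ia}=6\langle\mathrm{Rc},h\rangle-\tfrac{3}{2}|h|^{2},
\]
and the quartic term $-\tfrac{3}{2}|h|^{2}$ is nonpositive, so it can simply be dropped when proving an upper bound. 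That is why (4.7) holds with only $C|\mathrm{Rm}|\,|H|^{2}$ on the right. This sign matters downstream: it is precisely what lets the maximum principle in Theorem~4.5 give the exponential bound $|H|^{2}\leq K_{2}^{2}e^{C_{n}K_{1}t}$ rather than a Riccati-type inequality that could blow up in finite time.
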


\begin{theorem} \label{t4.5}Suppose that $(g(x,t),H(x,t))$ is a solution to {\rm GRF} on a
closed manifold $M^{n}$ for a short time $0\leq t\leq T$ and
$K_{1}, K_{2}$ are arbitrary given nonnegative constants. Then there
exists a constant $C_{n}$ depending only on $n$ such that if
\begin{equation*}
|{\rm Rm}(x,t)|_{g(x,t)}\leq K_{1}, \ \ \ |H(x)|_{g(x)}\leq K_{2}
\end{equation*}
for all $x\in M$ and $t\in[0,T]$, then
\begin{equation}
|H(x,t)|_{g(x,t)}\leq K_{2}e^{C_{n}K_{1}t}\label{4.8}
\end{equation}
for all $x\in M$ and $t\in[0,T]$.
\end{theorem}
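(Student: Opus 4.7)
The plan is to apply the parabolic maximum principle to the pointwise evolution inequality for $|H|^2$ that has already been established in Corollary 4.4. Specifically, the second displayed inequality in that corollary gives
\begin{equation*}
\frac{\partial}{\partial t}|H|^{2}\leq\Delta|H|^{2}-2|\nabla H|^{2}+C\cdot|{\rm Rm}|\cdot|H|^{2},
\end{equation*}
where $C$ depends only on $n$. Under the hypothesis $|{\rm Rm}(x,t)|_{g(x,t)}\leq K_{1}$, and after discarding the favorable term $-2|\nabla H|^{2}$, this becomes the linear parabolic differential inequality
\begin{equation*}
\frac{\partial}{\partial t}|H|^{2}\leq\Delta|H|^{2}+CK_{1}\cdot|H|^{2}.
\end{equation*}

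Next, I would compare $u(x,t):=|H(x,t)|^{2}_{g(x,t)}$ to the spatially constant solution $\phi(t)$ of the ODE $\phi'(t)=CK_{1}\phi(t)$ with $\phi(0)=K_{2}^{2}$, namely $\phi(t)=K_{2}^{2}e^{CK_{1}t}$. Since $M$ is compact and $u(x,0)\leq K_{2}^{2}=\phi(0)$ by the hypothesis on the initial $3$-form, the scalar maximum principle for the operator $\partial_{t}-\Delta_{g(t)}-CK_{1}$ applied to $u-\phi$ yields $u(x,t)\leq\phi(t)$ for all $x\in M$ and $t\in[0,T]$. Taking square roots and setting $C_{n}:=C/2$ gives the desired bound
\begin{equation*}
|H(x,t)|_{g(x,t)}\leq K_{2}e^{C_{n}K_{1}t}.
\end{equation*}

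There is no real obstacle here: all of the difficulty is already absorbed into the derivation of Corollary 4.4, which is where the Bochner-type identity $\square_{g(t)}H=\Delta_{g(t)}H+{\rm Rm}\ast H$ and the evolving metric's contribution to $\partial_{t}|H|^{2}$ are converted into the clean scalar inequality above. The only point to be slightly careful with is that the operator in the inequality is the time-dependent Laplacian $\Delta_{g(t)}$, but since this is an honest second-order elliptic operator at each fixed time on a compact manifold, Hamilton's version of the scalar maximum principle (reducing to the ODE at a maximum point) applies without modification.
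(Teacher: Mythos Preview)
Your proposal is correct and follows essentially the same route as the paper: the paper also invokes the inequality from Corollary~4.4, uses the curvature bound to get $\partial_t |H|^2 \le \Delta |H|^2 + C_n K_1 |H|^2$, and then appeals to the maximum principle to conclude $|H|^2 \le K_2^2 e^{C_n K_1 t}$. Your write-up is simply a more detailed version of the paper's two-line proof, including the explicit ODE comparison and the bookkeeping $C_n := C/2$ after taking square roots.
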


\begin{proof} Since
\begin{equation*}
\frac{\partial}{\partial t}|H|^{2}\leq\Delta|H|^{2}+C_{n}|{\rm
Rm}|\cdot|H|^{2}\leq \Delta|H|^{2}+C_{n}K_{1}|H|^{2},
\end{equation*}
using maximum principle, we obtain $u(t)\leq u(0)e^{C_{n}K_{1}t}$,
where $u(t)=|H|^{2}$.
\end{proof}

The main result in this section is the following estimates for
higher derivatives of Riemann curvature tensors and three forms. Some
special cases were proved in \cite{S1,S2,Y}.

\begin{theorem} \label{t4.6}Suppose that $(g(x,t),H(x,t))$ is a solution to {\rm GRF} on a
compact manifold $M^{n}$ and $K$ is an arbitrary given positive
constant. Then for each $\alpha>0$ and each integer $m\geq1$ there
exists a constant $C_{m}$ depending on $m, n, \max\{\alpha,1\}$, and
$K$ such that if
\begin{equation*}
|{\rm Rm}(x,t)|_{g(x,t)}\leq K, \ \ \ |H(x)|_{g(x)}\leq K
\end{equation*}
for all $x\in M$ and $t\in[0,\alpha/K]$, then
\begin{equation}
|\nabla^{m-1} {\rm Rm}(x,t)|_{g(x,t)}+|\nabla^{m}
H(x,t)|_{g(x,t)}\leq\frac{C_{m}}{t^{m/2}}\label{4.9}
\end{equation}
for all $x\in M$ and $t\in(0,\alpha/K]$.
\end{theorem}

\begin{proof} In the following computations we always let $C$ be any constants depending on
$n,m,\max\{\alpha,1\}$,and $K$, which may take different values at
different places. From the evolution equations and Theorem \ref{t4.5}, we
have
\begin{eqnarray*}
\frac{\partial}{\partial t}|{\rm Rm}|^{2}&\leq&\Delta|{\rm
Rm}|^{2}-2|\nabla{\rm Rm}|^{2}+C+C|\nabla^{2} H|+C|\nabla H|^{2}, \\
\frac{\partial}{\partial t}|H|^{2}&\leq&\Delta|H|^{2}-2|\nabla
H|^{2}+C, \\
\frac{\partial}{\partial t}|\nabla H|^{2}&\leq&\Delta|\nabla
H|^{2}-2|\nabla^{2} H|^{2}+C|\nabla{\rm Rm}|\cdot|\nabla H|+C|\nabla
H|^{2}.
\end{eqnarray*}
Consider the function $u=t|\nabla H|^{2}+\gamma|H|^{2}+t|{\rm
Rm}|^{2}$. Directly computing, we obtain
\begin{eqnarray*}
\frac{\partial}{\partial t}u&\leq&\Delta
u-2t|\nabla^{2}H|^{2}+Ct|\nabla^{2}H|+(C-2\gamma)|\nabla
H|^{2}+C+C\gamma \\
&&- \ 2t|\nabla{\rm Rm}|^{2}+Ct\cdot|\nabla{\rm Rm}|\cdot|\nabla H| \\
&\leq&\Delta u+2(C-\gamma)\cdot|\nabla H|^{2}+C(1+\gamma).
\end{eqnarray*}
If we choose $\gamma=C$, then $\frac{\partial}{\partial
t}u\leq\Delta u+C$ which implies that $u\leq Ce^{Ct}$ since $u(0)\leq C$.
With this estimate we are able to bound the first covariant
derivative of ${\rm Rm}$ and the second covariant derivative of $H$.
In order to control the term $|\nabla{\rm Rm}|^{2}$, we should use
the evolution equations of $|H|^{2}$, $|\nabla H|^{2}$ and
$|\nabla^{2}H|^{2}$ to cancel with the bad terms, i.e.,
$|\nabla^{2}{\rm Rm}|^{2}$, $|\nabla^{2}H|^{2}$, and
$|\nabla^{3}H|^{2}$,  in the evolution equation of $|\nabla{\rm
Rm}|^{2}$:
\begin{eqnarray*}
\frac{\partial}{\partial t}|\nabla{\rm
Rm}|^{2}&\leq&\Delta|\nabla{\rm Rm}|^{2}-2|\nabla^{2}{\rm
Rm}|^{2}+C|\nabla{\rm Rm}|^{2} +\frac{C}{t^{1/2}}|\nabla{\rm Rm}|\\
&&+ \ C\cdot|\nabla{\rm
Rm}|\cdot|\nabla^{3}H|+\frac{C}{t^{1/2}}|\nabla^{2}H|\cdot|\nabla{\rm
Rm}|, \\
\frac{\partial}{\partial t}|\nabla^{2}H|^{2}&\leq&\Delta|\nabla^{2}
H|^{2}-2|\nabla^{3}H|^{2}+C\cdot|\nabla^{2}{\rm
Rm}|\cdot|\nabla^{2}H| \\
&&+ \ \frac{C}{t^{1/2}}|\nabla{\rm
Rm}|\cdot|\nabla^{2}H|+C|\nabla^{2}H|^{2}+\frac{C}{t}|\nabla^{2}H|.
\end{eqnarray*}
As above, we define
\begin{equation*}
u:=t^{2}(|\nabla^{2}H|^{2}+|\nabla{\rm Rm}|^{2})+t\beta(|\nabla
H|^{2}+|{\rm Rm}|^{2})+\gamma|H|^{2},
\end{equation*}
and therefore, $\frac{\partial u}{\partial t}\leq\Delta u+ C$.
Motivated by cases for $m=1$ and $m=2$, for general $m$, we can
define a function
\begin{equation*}
u:=t^{m}(|\nabla^{m}H|^{2}+|\nabla^{m-1}{\rm
Rm}|^{2})+\sum^{m-1}_{i=1}\beta_{i}t^{i}(|\nabla^{i}H|^{2}+|\nabla^{i-1}{\rm
Rm}|^{2})+\gamma|H|^{2},
\end{equation*}
where $\beta_{i}$ and $\gamma$ are positive constants determined
later. In the following, we always assume $m\geq3$. Suppose
\begin{equation*}
|\nabla^{i-1}{\rm Rm}|+|\nabla^{i} H|\leq\frac{C_{i}}{t^{i/2}}, \ \
\ i=1,2,\cdots,m-1.
\end{equation*}
For such $i$, from Corollary \ref{c4.4}, we have
\begin{eqnarray*}
\frac{\partial}{\partial
t}|\nabla^{i}H|^{2}&\leq&\Delta|\nabla^{i}H|^{2}-2|\nabla^{i+1}H|^{2}
+C\sum^{i}_{j=0}|\nabla^{j}H|\cdot|\nabla^{i-j}{\rm
Rm}|\cdot|\nabla^{i}H|\\
&&+ \ C\sum^{i}_{j=0}\sum^{i-j}_{\ell=0}|\nabla^{j}H|\cdot|\nabla^{i-j
-\ell}H|\cdot|\nabla^{\ell}H|\cdot|\nabla^{i}H|
\\
&\leq&\Delta|\nabla^{i}H|^{2}-2|\nabla^{i+1}H|^{2}+C\cdot|\nabla^{i}H|\sum^{i}_{j=0}\frac{C_{j}}{t^{\frac{j}{2}}}\cdot\frac{C_{i-j+1}}{t^{\frac{i-j+1}{2}}}
\\
&&+ \ C\cdot|\nabla^{i}H|\sum^{i}_{j=0}\sum^{i-j}_{\ell=0}
\frac{C_{j}}{t^{\frac{j}{2}}}\cdot\frac{C_{i-j-\ell}}
{t^{\frac{i-j-1}{2}}}\cdot\frac{C_{\ell}}{t^{\frac{l}{2}}}
\\
&\leq&\Delta|\nabla^{i}H|^{2}-2|\nabla^{i+1}H|^{2}+\frac{C_{i}}{t^{\frac{i+1}{2}}}|\nabla^{i}H|+\frac{C_{i}}{t^{\frac{i}{2}}}|\nabla^{i}H|.
\end{eqnarray*}
Similarly, from Corollary \ref{c4.2} we also have
\begin{eqnarray*}
\frac{\partial}{\partial t}|\nabla^{i-1}{\rm
Rm}|^{2}&\leq&\Delta|\nabla^{i-1}{\rm Rm}|^{2}-2|\nabla^{i}{\rm
Rm}|^{2}+C\sum^{i-1}_{j=0}|\nabla^{j}{\rm Rm}||\nabla^{i-1-j}{\rm
Rm}||\nabla^{i-1}{\rm Rm}| \\
&&+ \ C\sum^{i-1}_{j=0}\sum^{i-1-j}_{\ell=0}|\nabla^{j}H|
\cdot|\nabla^{i-1-j-\ell}H|\cdot|\nabla^{\ell}{\rm
Rm}|\cdot|\nabla^{i-1}{\rm Rm}| \\
&&+ \ C\sum^{i+1}_{j=0}|\nabla^{j}H|\cdot|\nabla^{i+1-j}H|\cdot|\nabla^{i-1}{\rm
Rm}| \\
&\leq&\Delta|\nabla^{i-1}{\rm Rm}|^{2}-2|\nabla^{i}{\rm Rm}|^{2}
+C\cdot|\nabla^{i-1}{\rm
Rm}|\sum^{i-1}_{j=0}\frac{C_{j+1}}{t^{\frac{j+1}{2}}}\cdot
\frac{C_{i-j}}{t^{\frac{i-j}{2}}}
\\
&&+ \ C\cdot|\nabla^{i-1}{\rm
Rm}|\sum^{i-1}_{j=0}\sum^{i-1-j}_{\ell=0}\frac{C_{j}}{t^{\frac{j}{2}}}
\cdot\frac{C_{i-1-j-\ell}}{t^{\frac{i-1-j-\ell}{2}}}
\cdot\frac{C_{\ell+1}}{t^{\frac{\ell+1}{2}}}
\\
&&+ \ C\cdot|\nabla^{i-1}{\rm
Rm}|\sum^{i}_{j=1}\frac{C_{j}}{t^{\frac{j}{2}}}\cdot\frac{C_{i+1-j}}{t^{\frac{i+1-j}{2}}}+C\cdot|\nabla^{i+1}H|\cdot\frac{C_{i}}{t^{\frac{i}{2}}}
\\
&\leq&\Delta|\nabla^{i-1}{\rm Rm}|^{2}-2|\nabla^{i}{\rm Rm}|^{2}
\\
&&+ \ \frac{C_{i}}{t^{\frac{i+1}{2}}}\cdot|\nabla^{i-1}{\rm
Rm}|+\frac{C_{i}}{t^{\frac{i}{2}}}|\nabla^{i+1}H|+\frac{C_{i}}{t^{\frac{i}{2}}}|\nabla^{i-1}{\rm
Rm}|.
\end{eqnarray*}
The evolution inequality for $u$ now is given by
\begin{eqnarray*}
\frac{\partial u}{\partial
t}&\leq&mt^{m-1}(|\nabla^{m}H|^{2}+|\nabla^{m-1}{\rm Rm}|^{2})
+\sum^{m-1}_{i=1}i\beta_{i}t^{i-1}(|\nabla^{i}H|^{2}+|\nabla^{i-1}{\rm
Rm}|^{2})\\
&&+ \ t^{m}\left(\frac{\partial}{\partial
t}|\nabla^{m}H|^{2}+\frac{\partial}{\partial t}|\nabla^{m-1}{\rm
Rm}|^{2}\right)
\\
&&+ \ \sum^{m-1}_{i=1}\beta_{i}t^{i}\left(\frac{\partial}{\partial
t}|\nabla^{i}H|^{2}+\frac{\partial}{\partial t}|\nabla^{i-1}{\rm
Rm}|^{2}\right)+\gamma\cdot\frac{\partial}{\partial t}|H|^{2}.
\end{eqnarray*}
It's easy to see that the second term is bounded by
\begin{equation*}
\sum^{m-1}_{i=1}i\beta_{i}t^{i-1}\frac{C_{i}}{t^{i}}=\sum^{m-1}_{i=1}i\beta_{i}C_{i}t^{-1},
\end{equation*}
but this bound depends on $t$ and approaches to infinity when $t$
goes to zero. Hence we use the last second term to control this bad
term. The evolution inequality for the third term is the combination
of the following two inequalities
\begin{eqnarray*}
\frac{\partial}{\partial
t}|\nabla^{m}H|^{2}&\leq&\Delta|\nabla^{m}H|^{2}-2|\nabla^{m+1}H|^{2}
+C\sum^{m}_{i=0}|\nabla^{i}H|\cdot|\nabla^{m-i}{\rm
Rm}|\cdot|\nabla^{m}H| \\
&&+ \ C\sum^{m}_{i=0}\sum^{m-i}_{j=0}|\nabla^{j}H|\cdot|\nabla^{m-i-j}H|\cdot|\nabla^{i}H|\cdot|\nabla^{m}H|
\\
&\leq&\Delta|\nabla^{m}H|^{2}-2|\nabla^{m+1}H|^{2}+C|\nabla^{m}H|^{2}
\\
&&+ \ C\cdot|\nabla^{m}{\rm Rm}|\cdot|\nabla^{m}H|+\frac{C_{m}}{t^{\frac{m+1}{2}}}|\nabla^{m}H|+\frac{C_{m}}{t^{\frac{m}{2}}}|\nabla^{m}H|,
\end{eqnarray*}
and
\begin{eqnarray*}
\frac{\partial}{\partial t}|\nabla^{m-1}{\rm
Rm}|^{2}&\leq&\Delta|\nabla^{m-1}{\rm Rm}|^{2}-2|\nabla^{m}{\rm
Rm}|^{2} \\
&&+ \ C\sum^{m-1}_{i=0}|\nabla^{i}{\rm Rm}|\cdot|\nabla^{m-1-i}{\rm
Rm}|\cdot|\nabla^{m-1}{\rm Rm}| \\
&&+ \ C\sum^{m-1}_{i=0}\sum^{m-1-i}_{j=0}|\nabla^{j}H|\cdot|\nabla^{m-1-i-j}H|
\cdot|\nabla^{i}{\rm
Rm}|\cdot|\nabla^{m-1}{\rm Rm}| \\
&&+ \ C\sum^{m+1}_{i=0}|\nabla^{i}H|\cdot|\nabla^{m+1-i}H|\cdot|\nabla^{m-1}{\rm
Rm}| \\
&\leq&\Delta|\nabla^{m-1}{\rm Rm}|^{2}-2|\nabla^{m}{\rm Rm}|^{2}
\\
&&+ \ C|\nabla^{m-1}{\rm
Rm}|^{2}+\frac{C}{t^{\frac{1}{2}}}\cdot|\nabla^{m}H|\cdot|\nabla^{m-1}{\rm
Rm}| \\
&&+ \ C|\nabla^{m+1}H|
|\nabla^{m-1}{\rm Rm}|+\frac{C_{m}}{t^{\frac{m+1}{2}}}|\nabla^{m-1}{\rm
Rm}|+\frac{C_{m}}{t^{\frac{m}{2}}}|\nabla^{m-1}{\rm Rm}|.
\end{eqnarray*}

Therefore we have
\begin{eqnarray*}
\frac{\partial u}{\partial
t}&\leq&mt^{m-1}(|\nabla^{m}H|^{2}+|\nabla^{m-1}{\rm Rm}|^{2})
\\
&&+ \ \sum^{m-1}_{i=1}i\beta_{i}t^{i-1}(|\nabla^{i}H|^{2}+|\nabla^{i-1}{\rm
Rm}|^{2}) \\
&&+ \ t^{m}\left(\Delta|\nabla^{m}H|^{2}-2|\nabla^{m+1}H|^{2}+\frac{C}{t^{\frac{m+1}{2}}}|\nabla^{m}H|+C|\nabla^{m}H|^{2}\right.
\\
&&+ \ \left.C|\nabla^{m}{\rm
Rm}|\cdot|\nabla^{m}H|+\Delta|\nabla^{m-1}{\rm Rm}|^{2}\right.
\\
&&- \ \left.2|\nabla^{m}{\rm
Rm}|^{2}+\frac{C}{t^{\frac{m+1}{2}}}|\nabla^{m-1}{\rm
Rm}|+C|\nabla^{m-1}{\rm Rm}|^{2}\right.
\\
&&+ \ \left.\frac{C}{t^{1/2}}|\nabla^{m}H|\cdot|\nabla^{m-1}{\rm
Rm}|+C|\nabla^{m+1}H|\cdot|\nabla^{m-1}{\rm Rm}|\right) \\
&&+ \ \sum^{m-1}_{i=1}\beta_{i}t^{i}\left(\frac{C_{i}}{t^{\frac{i+1}{2}}}|\nabla^{i-1}{\rm
Rm}|+\Delta|\nabla^{i}H|^{2}-2|\nabla^{i+1}H|^{2}\right. \\
&&+ \ \left.\Delta|\nabla^{i-1}{\rm
Rm}|^{2}+\frac{C_{i}}{t^{\frac{i+1}{2}}}|\nabla^{i}H|+\frac{C_{i}}{t^{\frac{i}{2}}}|\nabla^{i+1}H|-2|\nabla^{i}{\rm
Rm}|^{2}\right) \\
&&+ \ \gamma(\Delta|H|^{2}-2|\nabla H|^{2}+C)
\\
&\leq&\Delta
u-2t^{m}|\nabla^{m+1}H|^{2}+Ct^{m}|\nabla^{m+1}H|\cdot|\nabla^{m-1}{\rm
Rm}| \\
&&- \ 2t^{m}|\nabla^{m}{\rm Rm}|^{2}+Ct^{m}|\nabla^{m}{\rm
Rm}|\cdot|\nabla^{m} H| \\
&&+ \ \sum^{m-2}_{i=0}(i+1)\beta_{i+1}t^{i}(|\nabla^{i+1}H|^{2}+|\nabla^{i}{\rm
Rm}|^{2})
\\
&&- \ 2\sum^{m-1}_{i=1}\beta_{i}t^{i}(|\nabla^{i+1}H|^{2}+|\nabla^{i}{\rm
Rm}|^{2})-2\gamma|\nabla H|^{2}+\gamma C \\
&&+ \ Ct^{m-1}|\nabla^{m}H|^{2}+Ct^{m-1}|\nabla^{m-1}{\rm Rm}|^{2}
\\
&&+ \ Ct^{\frac{m-1}{2}}|\nabla^{m}H|+Ct^{\frac{m-1}{2}}|\nabla^{m-1}{\rm
Rm}|\\
&&+ \ Ct^{m-\frac{1}{2}}|\nabla^{m}H|\cdot|\nabla^{m-1}{\rm
Rm}|+Ct^{m}|\nabla^{m+1}H|\cdot|\nabla^{m-1}{\rm Rm}| \\
&&+ \ \sum^{m-1}_{i=1}\beta_{i}C_{i}t^{\frac{i}{2}}|\nabla^{i+1}H|+\sum^{m-1}_{i=1}\beta_{i}C_{i}t^{\frac{i-1}{2}}(|\nabla^{i}H|^{2}+|\nabla^{i-1}{\rm
Rm}|).
\end{eqnarray*}
Choosing
\begin{equation*}
(i+1)\beta_{i+1}=\beta_{i}, \ \ \ \beta_{i}=\frac{A}{i!}, \ \ \
i\geq0,
\end{equation*}
where $A$ is constant which is determined later, and noting that
\begin{eqnarray*}
\sum^{m-1}_{i=1}\beta_{i}C_{i}t^{i/2}|\nabla^{i+1}H|&\leq&\frac{1}{2}\sum^{m-1}_{i=1}\beta_{i}t^{i}|\nabla^{i+1}H|^{2}+\frac{1}{2}\sum^{m-1}_{i=1}\beta_{i}C^{2}_{i},
\end{eqnarray*}
\begin{eqnarray*}
& &
\sum^{m-1}_{i=1}\beta_{i}C_{i}t^{\frac{i-1}{2}}(|\nabla^{i}H|+|\nabla^{i-1}{\rm
Rm}|) \\
&\leq&\beta_{1}C_{1}(|\nabla
H|+|{\rm Rm}|)+\sum^{m-2}_{i=1}\beta_{i+1}C_{i+1}t^{\frac{i}{2}}(|\nabla^{i+1}H|+|\nabla^{i}{\rm Rm}|) \\
&\leq&\beta_{1}C_{1}(|\nabla H|+|{\rm Rm}|)
\\
&&+ \ \sum^{m-2}_{i=1}\beta_{i+1}C_{i+1}\left(\frac{t^{i}|\nabla^{i+1}H|^{2}}{\frac{2\beta_{i+1}C_{i+1}}{\beta_{i}}}+\frac{t^{i}|\nabla^{i}{\rm
Rm}|^{2}}{\frac{2\beta_{i+1}C_{i+1}}{\beta_{i}}}+\frac{\beta_{i+1}C_{i+1}}{\beta_{i}}\right)
\\
&\leq&\beta_{1}C_{1}(|\nabla H|+|{\rm Rm}|)
\\
&&+ \ \frac{1}{2}\sum^{m-2}_{i=1}\beta_{i}t^{i}(|\nabla^{i+1}H|^{2}+|\nabla^{i}{\rm
Rm}|^{2})+\sum^{m-2}_{i=1}\frac{\beta^{2}_{i+1}C^{2}_{i+1}}{\beta_{i}},
\end{eqnarray*}
It yields that
\begin{eqnarray*}
\frac{\partial}{\partial t}u&\leq&\Delta u-2t^{m}|\nabla^{m+1}H|^{2}+Ct^{m}|\nabla^{m+1}H|\cdot|\nabla^{m-1}{\rm Rm}| \\
&&- \ 2t^{m}|\nabla^{m}{\rm Rm}|^{2}+Ct^{m}|\nabla^{m}H|\cdot|\nabla^{m}{\rm Rm}| \\
&&+ \ Ct^{m-1}|\nabla^{m}H|^{2}+Ct^{m-1}|\nabla^{m-1}{\rm Rm}|^{2} \\
&&+ \ Ct^{m-\frac{1}{2}}|\nabla^{m}H|\cdot|\nabla^{m-1}{\rm
Rm}|+\beta_{0}(|\nabla H|^{2}+|{\rm Rm}|^{2})
\\
&&- \ \sum^{m-1}_{i=1}\beta_{i}t^{i}(|\nabla^{i+1}H|^{2}+|\nabla^{i}{\rm
Rm}|^{2})\\
&&+ \ \sum^{m-2}_{i=1}\beta_{i}t^{i}(|\nabla^{i+1}H|^{2}+|\nabla^{i}{\rm
Rm}|^{2})+\frac{1}{2}\beta_{m-1}t^{m-1}|\nabla^{m}H|^{2} \\
&&+ \ \beta_{1}C_{1}|\nabla H|-2\gamma|\nabla H|^{2}+C+C\gamma \\
&\leq&\Delta u+Ct^{m-1}|\nabla^{m-1}{\rm
Rm}|^{2}+Ct^{m-1}|\nabla^{m}H|^{2} \\
&&+ \ Ct^{m-\frac{1}{2}}(|\nabla^{m}H|^{2}+|\nabla^{m-1}{\rm
Rm}|^{2})+\beta_{0}|\nabla H|^{2} \\
&&+ \ \beta_{1}C_{1}|\nabla H|-2\gamma|\nabla H|^{2}+C+C\gamma \\
&&- \ \frac{1}{2}\beta_{m-1}t^{m-1}|\nabla^{m}H|^{2}-\beta_{m-1}t^{m-1}|\nabla^{m}{\rm
Rm}|^{2} \\
&\leq&\Delta
u+\frac{1}{2}(C\sqrt{t}+C-\beta_{m-1})t^{m-1}(|\nabla^{m-1}{\rm
Rm}|^{2}+|\nabla^{m}H|^{2}) \\
&&+ \ (\beta_{0}+\beta_{1}C_{1}-2\gamma)|\nabla
H|^{2}+C+C\gamma+\beta_{1}C_{1}.
\end{eqnarray*}
When we chose $A$ and $\gamma$ sufficiently large, we obtain
$\frac{\partial u}{\partial t}\leq\Delta u+C$ which implies that
$u(t)\leq C$ since $u(0)$ is bounded.
\end{proof}

Finally we give an estimate which plays a crucial role in the next
section.

\begin{corollary} \label{c4.7}Let $(g(x,t),H(x,t))$ be a solution of the
generalized Ricci flow on a closed manifold $M$. If there are
$\beta>0$ and $K>0$ such that
\begin{equation*}
|{\rm Rm}(x,t)|_{g(x,t)}\leq K, \ \ \ |H(x)|_{g(x)}\leq K
\end{equation*}
for all $x\in M$ and $t\in[0,T]$, where $T>\beta/K$, then
there exists for each $m\in\mathbb{N}$ a constant $C_{m}$ depending on $m,n,
\min\{\beta,1\}$, and $K$ such that
\begin{equation*}
|\nabla^{m-1}{\rm Rm}(x,t)|_{g(x,t)}+|\nabla^{m}H(x,t)|_{g(x,t)}\leq
C_{m}K^{m/2}
\end{equation*}
for all $x\in M$ and $t\in[\min\{\beta,1\}/K,T]$.
\end{corollary}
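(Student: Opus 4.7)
The plan is to deduce this corollary from Theorem 4.7 by a standard time-translation argument, exploiting the fact that GRF is a time-autonomous system.

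Set $\alpha := \min\{\beta, 1\} \in (0, 1]$, so that $\alpha/K \leq \beta/K < T$. For each fixed $t_0 \in [\alpha/K, T]$, I would introduce the shifted flow
\begin{equation*}
\tilde{g}(x, s) := g(x, t_0 - \alpha/K + s), \qquad \tilde{H}(x, s) := H(x, t_0 - \alpha/K + s), \qquad s \in [0, \alpha/K].
\end{equation*}
Since no term in GRF carries explicit time-dependence, $(\tilde g, \tilde H)$ is again a solution of GRF on the fixed-length interval $[0, \alpha/K]$.

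I would then verify the hypotheses of Theorem 4.7 for $(\tilde g, \tilde H)$. The bound $|\widetilde{{\rm Rm}}|_{\tilde g} \leq K$ on $[0, \alpha/K]$ is inherited from the hypothesis on $[0, T]$. To control $|\tilde H(\cdot, 0)|_{\tilde g(\cdot, 0)} = |H(\cdot, t_0 - \alpha/K)|_{g(\cdot, t_0 - \alpha/K)}$, I would invoke Theorem 4.6 applied to the original flow over $[0, t_0 - \alpha/K]$; together with the standing hypothesis on $|H|$ this gives a bound depending only on $n, K, \beta$, which can be absorbed into a possibly enlarged constant, still called $K$. At this point Theorem 4.7 applies to $(\tilde g, \tilde H)$ with length parameter $\alpha \leq 1$, and its constants depend only on $m, n, K$ and $\max\{\alpha, 1\} = 1$.

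Evaluating the conclusion of Theorem 4.7 at the right endpoint $s = \alpha/K$ then yields
\begin{equation*}
|\nabla^{m-1}{\rm Rm}(x, t_0)|_{g(x, t_0)} + |\nabla^m H(x, t_0)|_{g(x, t_0)} \leq \frac{C_m}{(\alpha/K)^{m/2}} = \bigl(C_m \alpha^{-m/2}\bigr) K^{m/2},
\end{equation*}
which has the required form with a constant depending on $m, n, K$, and $\min\{\beta,1\}$. Since $t_0 \in [\min\{\beta,1\}/K, T]$ was arbitrary, this finishes the proof.

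The main (and only nontrivial) obstacle is keeping the $|\tilde H(\cdot, 0)|$ bound at the shifted initial time uniform in $t_0$: a naive application of Theorem 4.6 produces an exponential factor of the form $K e^{C_n K t_0}$, which must be absorbed into $K$ via the standing hypothesis. The rest is bookkeeping: the specific choice of shift length $\alpha/K$ with $\alpha \leq 1$ is what ensures that the $\max\{\alpha, 1\}$ factor in Theorem 4.7 equals $1$, so no extra dependence on $T$ sneaks into $C_m$.
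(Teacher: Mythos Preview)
Your approach is exactly the paper's: shift the flow so that the target time $t_0$ becomes the right endpoint of an interval of length $\min\{\beta,1\}/K$, apply the main derivative estimate (Theorem~4.6 in the paper's numbering; you label it 4.7) on that shifted interval, and evaluate at the endpoint.

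You are actually more careful than the paper on one point. The paper simply asserts $|\overline H(\cdot,0)|_{\overline g(\cdot,0)}\le K$ ``by the assumption,'' but the hypothesis only controls $|H(\cdot,0)|$, not $|H(\cdot,T_0)|$; you correctly spot this and invoke the zero-order bound on $|H|$ (Theorem~4.5 in the paper; you call it 4.6). Be aware, though, that this bound is $Ke^{C_nK(t_0-\alpha/K)}$, so after feeding it into Theorem~4.6 the resulting constant $C_m$ quietly picks up a dependence on $T$ through your ``enlarged $K$''. Your phrase ``depending only on $n,K,\beta$'' is therefore not literally correct, and there is no ``standing hypothesis'' that removes the exponential factor. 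The paper's proof has the same hidden $T$-dependence; it is harmless for the intended application (the compactness theorem on a fixed time interval), but it is a genuine imprecision in the corollary as stated rather than something you can absorb away.
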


\begin{proof} The proof is the same as in \cite{CCGGIIKLLN}, we just copy it here. Let
$\beta_{1}:=\min\{\beta,1\}$. For any fixed point
$t_{0}\in[\beta_{1}/K,T]$ we set
$T_{0}:=t_{0}-\frac{\beta_{1}}{K}$. For $\overline{t}:=t-T_{0}$ we
let $\overline{g}(\overline{t})$ and $\overline{H}(\overline{t})$ be the solution of the system
\begin{eqnarray*}
\frac{\partial}{\partial\overline{t}}\bar{g}&=&-2\overline{{\rm
Rc}}+\frac{1}{2}\overline{h}, \\
\frac{\partial}{\partial\overline{t}}\overline{H}&=&\Delta_{{\rm HL},\overline{g}}\overline{H},
\ \ \ \overline{g}(0)=g(T_{0}), \ \ \ \overline{H}(0)=H(T_{0}).
\end{eqnarray*}
The uniqueness of solution implies that
$\overline{g}(\overline{t})=g(\overline{t}+T_{0})=g(t)$ for
$\overline{t}\in[0,\beta_{1}/K]$. By the assumption we have
\begin{equation*}
|\overline{\rm
Rm}(x,\overline{t})|_{\overline{g}(x,\overline{t})}\leq K, \ \ \
|\overline{H}(x)|_{\overline{g}(x)}\leq K
\end{equation*}
for all $x\in M$ and $\overline{t}\in[0,\beta_{1}/K]$.
Applying Theorem 4.5 with $\alpha=\beta_{1}$, we have
\begin{equation*}
|\overline{\nabla}^{m-1}\overline{\rm
Rm}(x,\overline{t})|_{\overline{g}(x,\overline{t})}+
|\overline{\nabla}^{m}H(x,\overline{t})|_{\overline{g}(x,\overline{t})}\leq\frac{\overline{C}_{m}}{\overline{t}^{m/2}}
\end{equation*}
for all $x\in M$ and $\overline{t}\in(0,\beta_{1}/K]$. We
have $\overline{t}^{m/2}\geq\beta^{m/2}_{1}2^{-m/2}K^{-m/2}$ if
$\overline{t}\in[\beta_{1}/2K,\beta_{1}/K]$. Taking
$\overline{t}=\beta_{1}/K$, we obtain
\begin{equation*}
|\nabla^{m-1}{\rm
Rm}(x,t_{0})|_{g(x,t_{0})}+|\nabla^{m}H(x,t_{0})|_{g(x,t_{0})}
\leq\frac{2^{m/2}\overline{C}_{m}K^{m/2}}{\beta^{m/2}_{1}}
\end{equation*}
for all $x\in M$. Since $t_{0}\in[\beta/K,T]$ was arbitrary,
the result follows.
\end{proof}

%%%%%%%%%%%%%%%%%%%%%%%%%%%%%%%%%%%%%%%%%%%%%%%%%%%%%%%%%%%%%%%%%%%%%%%%%%%%%%
\section{Compactness theorem}\label{5}
%%%%%%%%%%%%%%%%%%%%%%%%%%%%%%%%%%%%%%%%%%%%%%%%%%%%%%%%%%%%%%%%%%%%%%%%%%%%%%
In this section we prove the compactness theorem for our generalized
Ricci flow. We follow Hamilton's method \cite{H2} on the compactness
theorem for the usual Ricci flow.

We review several definitions from \cite{CCGGIIKLLN}. Throughout this
section, all Riemannian manifolds are smooth manifolds of dimensions $n$. The covariant derivative with respect to a metric
$g$ will be denoted by ${}^{g}\nabla$.

\begin{definition} \label{d5.1}Let $K\subset M$ be a compact set and let
$\{g_{k}\}_{k\in\mathbb{N}}, g_{\infty}$, and $g$ be Riemannian metrics on
$M$. For $p\in\{0\}\cup\mathbb{N}$ we say that $g_{k}$ {\it converges in
$C^{p}$ to $g_{\infty}$ uniformly on $K$ with respect to $g$} if for
every $\epsilon>0$ there exists $k_{0}=k_{0}(\epsilon)>0$ such that
for $k\geq k_{0}$,
\begin{equation}
\|g_{k}-g_{\infty}\|_{C^{p};K,g}:=\sup_{0\leq\alpha\leq p}\sup_{x\in
K}|{}^{g}\nabla^{\alpha}(g_{k}-g_{\infty})(x)|_{g}<\epsilon.\label{5.1}
\end{equation}
\end{definition}

Since we consider a compact set, the choice of background metric $g$
does not change the convergence. Hence we may choose $g=g_{\infty}$.

\begin{definition} \label{d5.2}Suppose $\{U_{k}\}_{k\in\mathbb{N}}$ is an exhaustion\footnote{If for any compact set $K\subset M$ there exists
$k_{0}\in\mathbb{N}$ such that $U_{k}\supset K$ for all $k\geq k_{0}$} of a
smooth manifold $M$ by open sets and $g_{k}$ are Riemannian metrics
on $U_{k}$. We say that $(U_{k},g_{k})$ {\it converges in
$C^{\infty}$ to $(M,g_{\infty})$ uniformly on compact sets in $M$}
if for any compact set $K\subset M$ and any $p>0$ there exists
$k_{0}=k_{0}(K,p)$ such that $\{g_{k}\}_{k\geq k_{0}}$ converges in
$C^{p}$ to $g_{\infty}$ uniformly on $K$.
\end{definition}

A {\it pointed Riemannian manifold} is a $3$-tuple $(M,g,O)$, where
$(M,g)$ is a Riemannian manifold and $O\in M$ is a basepoint. If the
metric $g$ is complete, the $3$-tuple is called a {\it complete
pointed Riemannian manifold}. We say $(M,g(t),H(t),O),
t\in(\alpha,\omega)$, is a {\it pointed solution to the generalized
Ricci flow} if $(M,g(t),H(t))$ is a solution to the generalized
Ricci flow.
\\

The so-called {\it Cheeger-Gromov convergence} in $C^{\infty}$ is
defined by

\begin{definition} \label{d5.3} A given
sequence $\{(M_{k},g_{k},O_{k})\}_{k\in\mathbb{N}}$ of complete pointed
Riemannian manifolds {\it converges} to a complete pointed
Riemannian manifold $(M_{\infty},g_{\infty},O_{\infty})$ if there
exist
\begin{itemize}

\item[(i)] an exhaustion $\{U_{k}\}_{k\in\mathbb{N}}$ of $M_{\infty}$ by
open sets with $O_{\infty}\in U_{k}$,

\item[(ii)] a sequence of diffeomorphisms $\Phi_{k}: M_{\infty}\ni U_{k}\to
V_{k}:=\Phi_{k}(U_{k})\subset M_{k}$ with
$\Phi_{k}(O_{\infty})=O_{k}$

\end{itemize}
such that $(U_{k},\Phi^{\ast}_{k}(g_{k}|_{V_{k}}))$ converges in
$C^{\infty}$ to $(M_{\infty},g_{\infty})$ uniformly on compact sets
in $M_{\infty}$.
\end{definition}

The corresponding convergence for the generalized Ricci flow is
similar to the convergence for the usual Ricci flow introduced by
Hamilton \cite{H2}.

\begin{definition} \label{d5.4}A given sequence $\{(M_{k},g_{k}(t),H_{k}(t),O_{k})\}_{k
\in\mathbb{N}}$ of complete pointed
solutions to the GRF {\it converges} to a complete pointed solution
to the GRF
\begin{equation*}
(M_{\infty},g_{\infty}(t),H_{\infty}(t),O_{\infty}),
t\in(\alpha,\omega),
\end{equation*}
if there exist
\begin{itemize}

\item[(i)] an exhaustion $\{U_{k}\}_{k\in\mathbb{N}}$ of $M_{\infty}$ by
open sets with $O_{\infty}\in U_{k}$,

\item[(ii)] a sequence of diffeomorphisms $\Phi_{k}: M_{\infty}\ni U_{k}\to
V_{k}:=\Phi_{k}(U_{k})\subset M_{k}$ with
$\Phi_{k}(O_{\infty})=O_{k}$

\end{itemize}
such that
$(U_{k}\times(\alpha,\omega),\Phi^{\ast}_{k}(g_{k}(t)|_{V_{k}})+dt^{2},\Phi^{\ast}_{k}(H_{k}(t)|_{V_{k}}))$
converges in $C^{\infty}$ to
$(M_{\infty}\times(\alpha,\omega),g_{\infty}(t)+dt^{2},H_{\infty}(t))$
uniformly on compact sets in $M_{\infty}\times(\alpha,\omega)$. Here
we denote by $dt^{2}$ the standard metric on $(\alpha,\omega)$.
\end{definition}

Let ${\rm inj}_{g}(O)$ be the injectivity radius of the metric $g$
at the point $O$. The following compactness theorem is due to Cheeger and Gromov.

\begin{theorem} \label{t5.5} {\bf (Compactness for metrics)} Let $\{(M_{k},g_{k},O_{k})\}_{k\in\mathbb{N}}$ be a sequence of
complete pointed Riemannian manifolds satisfying

\begin{itemize}

\item[(i)] for all $p\geq0$ and $k\in\mathbb{N}$, there is a sequence of
constants $C_{p}<\infty$ independent of $k$ such that
\begin{equation*}
|{}^{g_{k}}\nabla^{p}{\rm Rm}(g_{k})|_{g_{k}}\leq C_{p}
\end{equation*}
on $M_{k}$,

\item[(ii)] there exists some constant $\iota_{0}>0$ such that
\begin{equation*}
{\rm inj}_{g_{k}}(O_{k})\geq\iota_{0}
\end{equation*}
for all $k\in\mathbb{N}$.

\end{itemize}
Then there exists a subsequence $\{j_{k}\}_{k\in\mathbb{N}}$ such that
$\{(M_{j_{k}},g_{j_{k}},O_{j_{k}})\}_{k\in\mathbb{N}}$ converges to a
complete pointed Riemannaian manifold
$(M^{n}_{\infty},g_{\infty},O_{\infty})$ as $k\to\infty$.
\end{theorem}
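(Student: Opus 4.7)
The plan is to combine the injectivity radius bound at the basepoints with the uniform curvature-derivative bounds to produce a smoothly convergent subsequence via the classical strategy of pulling the metrics back to a common model by exponential maps and invoking Arzel\`a--Ascoli. First I would propagate the injectivity radius lower bound away from the basepoint: the Cheeger--Gromov--Taylor injectivity radius estimate, which needs only a two-sided sectional curvature bound and a lower bound on ${\rm inj}_{g_{k}}(O_{k})$, gives for each $r>0$ a constant $\iota(r)>0$, independent of $k$, that bounds ${\rm inj}_{g_{k}}(x)$ from below at every $x\in B_{g_{k}}(O_{k},r)$. Hypothesis (i) supplies the curvature control, hypothesis (ii) supplies the seed.

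Next I would set up exponential normal charts. For each $k$ the map $\exp_{O_{k}}^{g_{k}}:B(0,\iota(1))\subset T_{O_{k}}M_{k}\to M_{k}$ is a diffeomorphism onto its image, and after fixing orthonormal frames we may view all of these as maps from a fixed Euclidean ball $B\subset\R^{n}$. On $B$ the pulled-back metrics $(\exp_{O_{k}}^{g_{k}})^{\ast}g_{k}$ form a family whose components $g_{ij}^{(k)}$ are controlled in every $C^{p}$ norm: via the Jacobi equation, bounds on $|{}^{g_{k}}\nabla^{p}{\rm Rm}|_{g_{k}}$ translate into bounds on $p+2$ derivatives of the metric in normal coordinates (this is Lemma 2.4 of [4]). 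The family is uniformly non-degenerate because $|{\rm Rm}|$ bounds force the Jacobi fields, hence the pulled-back metric, to stay close to Euclidean on a ball of controlled size. Arzel\`a--Ascoli then yields a subsequence converging in $C^{\infty}_{\rm loc}(B)$ to a smooth limit metric $g_{\infty}$.

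To globalize, I would cover a large ball $B_{g_{k}}(O_{k},r)$ by exponential charts centered at points of a maximal $\iota(r)/2$-net; uniform injectivity radius and curvature bounds again give uniform $C^{p}$-bounds on all the metric components and on all the coordinate transition maps. A diagonal argument over a nested exhaustion $U_{1}\Subset U_{2}\Subset\cdots$ produces a single subsequence whose charts, together with their transition maps, converge in $C^{\infty}$. One then defines $M_{\infty}$ by gluing the limiting charts via the limiting transition maps; the diffeomorphisms $\Phi_{k}:U_{k}\to V_{k}\subset M_{k}$ are the corresponding coordinate maps, with $\Phi_{k}(O_{\infty})=O_{k}$ by construction. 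Completeness of $g_{\infty}$ follows because each $g_{\infty}$-ball around $O_{\infty}$ is eventually contained in some $U_{k}$, pulled back from a ball of bounded $g_{k}$-radius which is complete by assumption.

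The main obstacle will be the gluing step: one must verify that the whole atlas of charts (not just a single one) converges along a common subsequence, that the limit transition maps are smooth diffeomorphisms, and that the resulting smooth structure on $M_{\infty}$ is Hausdorff and second countable. This is exactly the technical core of Hamilton's argument in [4], and the proof reduces to a careful bookkeeping of the diagonal extraction combined with the uniform estimates obtained in the previous two steps.
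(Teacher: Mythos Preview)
The paper does not prove this statement at all: it is quoted verbatim as Hamilton's compactness theorem from~[4] and used as a black box in the proof of Theorem~5.2. Your outline is a faithful sketch of Hamilton's original argument (injectivity radius propagation via Cheeger--Gromov--Taylor, $C^{p}$ control of the metric in exponential normal coordinates from curvature-derivative bounds, Arzel\`a--Ascoli on overlapping charts, and a diagonal extraction to build the limit manifold), so there is nothing to compare against in this paper. If the intent was merely to cite the result, a one-line reference to~[4] would suffice; if the intent was to supply an independent proof, your plan is correct in spirit but each step you flag as ``the main obstacle'' is indeed where all the work lies and would need to be written out in full.
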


As a consequence of Theorem \ref{t5.5}, we state our
compactness theorem for GRF.

\begin{theorem} \label{t5.6}{\bf (Compactness for GRF)} Let
$\{(M_{k},g_{k}(t),H_{k}(t),O_{k})\}_{k\in\mathbb{N}}$ be a sequence of
complete pointed solutions to {\rm GRF} for
$t\in[\alpha,\omega)\ni0$ such that

\begin{itemize}

\item[(i)] there is a constant $C_{0}<\infty$ independent of
$k$ such that
\begin{equation*}
\sup_{(x,t)\in M_{k}\times(\alpha,\omega)}|{\rm
Rm}(g_{k}(x,t))|_{g_{k}(x,t)}\leq C_{0}, \ \ \ \sup_{x\in
M_{k}}|H_{k}(x,\alpha)|_{g_{k}(x,\alpha)}\leq C_{0},
\end{equation*}

\item[(ii)] there exists a constant $\iota_{0}>0$ satisfying
\begin{equation*}
{\rm inj}_{g_{k}(0)}(O_{k})\geq\iota_{0}.
\end{equation*}

\end{itemize}
Then there exists a subsequence $\{j_{k}\}_{k\in\mathbb{N}}$ such that
\begin{equation*}
(M_{j_{k}},g_{j_{k}}(t),H_{j_{k}}(t),O_{j_{k}})\longrightarrow
(M_{\infty},g_{\infty}(t),H_{\infty}(t),O_{\infty}),
\end{equation*}
converges to a complete pointed solution
$(M_{\infty},g_{\infty}(t),H_{\infty}(t),O_{\infty}),
t\in[\alpha,\omega)$ to {\rm GRF}
 as $k\to\infty$.
\end{theorem}

To prove Theorem \ref{t5.6} we extend a lemma
for Ricci flow to {\rm GRF}. After establishing this lemma, the
proof of Theorem \ref{t5.6} is similar to Theorem 3.10 in
\cite{CCGGIIKLLN}.

\begin{lemma} \label{l5.7}Let $(M,g)$ be a Riemannian manifold with a background
metric $g$, let $K$ be a compact subset of $M$, and let
$(g_{k}(x,t),H_{k}(x,t))$ be a collection of solutions to the
generalized Ricci flow defined on neighborhoods of
$K\times[\beta,\psi]$, where $t_{0}\in[\beta,\psi]$ is a fixed time.
Suppose that
\begin{itemize}

\item[(i)] the metrics $g_{k}(x,t_{0})$ are all uniformly equivalent
to $g(x)$ on $K$, i.e., for all $V\in T_{x}M, k$, and $x\in K$,
\begin{equation*}
C^{-1}g(x)(V,V)\leq g_{k}(x,t_{0})(V,V)\leq Cg(x)(V,V),
\end{equation*}
where $C<\infty$ is a constant independent of $V,k$, and $x$,

\item[(ii)] the covariant derivatives of the metrics $g_{k}(x,t_{0})$ with respect to the metric $g(x)$
are all uniformly bounded on $K$, i.e., for all $k$ and $p\geq1$,
\begin{equation*}
|{}^{g}\nabla^{p}g_{k}(x,t_{0})|_{g(x)}+|{}^{g}\nabla^{p-1}H_{k}(x,t_{0})|_{g(x)}\leq
C_{p}
\end{equation*}
where $C_{p}<\infty$ is a sequence of constants independent of $k$,

\item[(iii)] the covariant derivatives of the curvature tensors
${\rm Rm}(g_{k}(x,t))$ and of the forms $H_{k}(x,t)$ are uniformly
bounded with respect to the metric $g_{k}(x,t)$ on
$K\times[\beta,\psi]$, i.e., for all $k$ and $p\geq0$,
\begin{equation*}
|{}^{g_{k}}\nabla^{p}{\rm
Rm}(g_{k}(x,t))|_{g_{k}(x,t)}+|{}^{g_{k}}\nabla^{p}H_{k}(x,t)|_{g_{k}(x,t)}\leq
C'_{p}
\end{equation*}
where $C'_{p}$ is a sequence of constants independent of $k$.
\end{itemize}
Then the metrics $g_{k}(x,t)$ are uniformly equivalent to $g(x)$ on
$K\times[\beta,\psi]$, i.e.,
\begin{equation*}
B(t,t_{0})^{-1}g(x)(V,V)\leq g_{k}(x,t)(V,V)\leq
B(t,t_{0})g(x)(V,V),
\end{equation*}
where $B(t,t_{0})=Ce^{C'_{0}|t-t_{0}|}$(Here the constant $C'_{0}$
may not be equal to the previous one), and the time-derivatives and
covariant derivatives of the metrics $g_{k}(x,t)$ with respect to
the metric $g(x)$ are uniformly bounded on $K\times[\beta,\psi]$,
i.e., for each $(p,q)$ there is a constant $\widetilde{C}_{p,q}$
independent of $k$ such that
\begin{equation*}
\left|\frac{\partial^{q}}{\partial
t^{q}}{}^{g}\nabla^{p}g_{k}(x,t)\right|_{g(x)}+\left|\frac{\partial^{q}}{\partial
t^{q}}{}^{g}\nabla^{p-1}H_{k}(x,t)\right|_{g(x)}\leq\widetilde{C}_{p,q}
\end{equation*}
for all $k$.
\end{lemma}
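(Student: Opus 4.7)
The plan is to follow Hamilton's scheme for the Ricci flow [2,4], adapted to the coupled GRF system, and establish three successive bounds on $K\times[\beta,\psi]$: (a) uniform two-sided metric equivalence of $g_k(x,t)$ and $g(x)$ with the claimed exponential rate $B(t,t_0)=Ce^{C_0'|t-t_0|}$; (b) uniform bounds on every spatial ${}^{g}\nabla$-derivative ${}^{g}\nabla^p g_k$ and ${}^{g}\nabla^{p-1}H_k$; and (c) uniform bounds on every mixed time-space derivative $\partial_t^q{}^{g}\nabla^p g_k$ and $\partial_t^q{}^{g}\nabla^{p-1}H_k$. The additional $h_k$-term in the metric flow and the simultaneous flow for $H_k$ mean that the $g_k$- and $H_k$-estimates must be propagated together, but no new analytic ingredient is required beyond Gr\"onwall's inequality and the conversion formula between the two Levi-Civita connections.

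For (a), fix $x\in K$ and $V\in T_xM$, set $\varphi(t):=g_k(x,t)(V,V)$, and compute from the GRF that
\[
|\varphi'(t)|=\Bigl|-2R_{ij}(g_k)V^iV^j+\tfrac12 h_{ij}(g_k)V^iV^j\Bigr|\leq C_0'\,\varphi(t),
\]
where $C_0'$ depends only on the $p=0$ bounds in assumption (iii). Integration from $t_0$ combined with (i) at $t=t_0$ gives the desired two-sided equivalence $B(t,t_0)^{-1}g\leq g_k(t)\leq B(t,t_0)g$ and, consequently, a uniform bound on $g_k^{-1}$ with respect to $g$.

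For (b), proceed by induction on $p$, at each step bounding the coupled quantity $u_p:=|{}^{g}\nabla^p g_k|_g^2+|{}^{g}\nabla^{p-1}H_k|_g^2$ simultaneously. Differentiating the GRF and commuting $\partial_t$ with the time-independent connection ${}^{g}\nabla$ gives
\[
\partial_t{}^{g}\nabla^p g_k={}^{g}\nabla^p\bigl(-2\,\mathrm{Rc}(g_k)+\tfrac12 h_k\bigr),\qquad
\partial_t{}^{g}\nabla^{p-1}H_k={}^{g}\nabla^{p-1}\bigl(\square_{g_k}H_k\bigr).
\]
Writing $A:={}^{g_k}\Gamma-{}^{g}\Gamma$, which schematically has the form $A=g_k^{-1}\ast{}^{g}\nabla g_k$ so that ${}^{g}\nabla^l A$ involves at most ${}^{g}\nabla^{l+1}g_k$, one applies the conversion identity ${}^{g}\nabla T={}^{g_k}\nabla T+A\ast T$ iteratively. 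A careful expansion then shows that ${}^{g}\nabla^p\mathrm{Rc}(g_k)$, ${}^{g}\nabla^p h_k$, and ${}^{g}\nabla^{p-1}(\square_{g_k}H_k)$ are all universal polynomials in $g_k^{-1}$, ${}^{g_k}\nabla^j\mathrm{Rm}(g_k)$ and ${}^{g_k}\nabla^jH_k$ (bounded via (iii)), and ${}^{g}\nabla^jg_k$, ${}^{g}\nabla^{j-1}H_k$ for $j\leq p$, with only the top-order pieces ${}^{g}\nabla^p g_k$ and ${}^{g}\nabla^{p-1}H_k$ appearing linearly. The resulting linear differential inequality
\[
\frac{d}{dt}u_p\leq C\,u_p+C,
\]
with $C$ depending on the lower-order inductive bounds and on (i)--(iii), together with the initial bound from (ii) at $t=t_0$, yields via Gr\"onwall the desired $k$-uniform estimate on $K\times[\beta,\psi]$.

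For (c), time derivatives are traded for spatial ones by the evolution equations themselves: since $\partial_t g_k$ and $\partial_t H_k$ are explicit polynomial expressions in $g_k^{-1}$, $g_k$, $\mathrm{Rm}(g_k)$, $H_k$ and their ${}^{g_k}\nabla$-derivatives (cf.\ Propositions~4.1 and~4.3), successive time differentiation reduces $\partial_t^q{}^{g}\nabla^p g_k$ and $\partial_t^q{}^{g}\nabla^{p-1}H_k$ to polynomials in the very quantities already controlled in step (b). The principal obstacle throughout is the combinatorial bookkeeping of the connection-difference expansion and of the resulting polynomial dependencies; once systematized---essentially identical to Hamilton's Ricci-flow treatment---the proof closes via the Gr\"onwall estimates above.
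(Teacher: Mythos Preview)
Your proposal is correct and follows essentially the same approach as the paper's proof: both establish metric equivalence by integrating the logarithmic derivative of $g_k(V,V)$, then run an induction on the derivative order using the connection difference $A={}^{g_k}\Gamma-{}^{g}\Gamma=g_k^{-1}\ast{}^{g}\nabla g_k$ to convert between ${}^{g}\nabla$ and ${}^{g_k}\nabla$, closing each step with a linear Gr\"onwall-type inequality, and finally reduce time derivatives to spatial ones via the evolution equations. The only cosmetic difference is that you package the inductive step as a single coupled quantity $u_p=|{}^{g}\nabla^{p}g_k|_g^{2}+|{}^{g}\nabla^{p-1}H_k|_g^{2}$, whereas the paper first proves $|{}^{g}\nabla^{N}\mathrm{Rc}(g_k)|_g\le C''_N|{}^{g}\nabla^{N}g_k|_g+C'''_N$ (and the analogous bound for $h_k$) and then integrates the resulting ODE for $|{}^{g}\nabla^{N}g_k|_g^{2}$ alone; the $H_k$-bounds then follow directly since all ${}^{g_k}\nabla^{j}H_k$ are already controlled by (iii).
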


\begin{proof} Before proving the lemma, we quote a fact
from \cite{CCGGIIKLLN}, i.e., Lemma 3.13:
Suppose that the metrics $g_{1}$ and $g_{2}$ are equivalent, i.e.,
$C^{-1}g_{1}\leq g_{2}\leq Cg_{1}$. Then for any $(p,q)$-tensor $T$
we have $|T|_{g_{2}}\leq C^{(p+q)/2}|T|_{g_{1}}$. We denote by $h$
the tensor $h_{ij}:=g^{kp}g^{lq}H_{ikl}H_{jpq}$. In the following we
denote by $C$ a constant depending only on $n, \beta$, and $\psi$
which may take different values at different places. For any tangent
vector $V\in T_{x}M$ we have
\begin{equation*}
\frac{\partial}{\partial t}g_{k}(x,t)(V,V)=-2{\rm
Ric}(g_{k}(x,t))(V,V)+\frac{1}{2}h_{k}(x,t)(V,V),
\end{equation*}
and therefore
\begin{eqnarray*}
\left|\frac{\partial}{\partial t}{\rm
log}g_{k}(x,t)(V,V)\right|&=&\left|\frac{-2{\rm Ric}(g_{k}(x,t))(V,V)+\frac{1}{2}h_{k}(x,t)(V,V)}{g_{k}(x,t)(V,V)}\right|
\\
&\leq&C'_{0}+C|H_{k}(x,t)|^{2}_{g_{k}(x,t)} \\
&\leq&C'_{0}+CC'^{2}_{0}:=\overline{C},
\end{eqnarray*}
since
\begin{eqnarray*}
|{\rm Ric}(g_{k}(x,t))(V,V)|&\leq&C'_{0}g_{k}(x,t)(V,V), \\
|h_{k}(x,t)(V,V)|&\leq&C|H_{k}(x,t)|^{2}_{g_{k}(x,t)}g_{k}(x,t)(V,V).
\end{eqnarray*}
Integrating on both sides, we have
\begin{eqnarray*}
\overline{C}|t_{1}-t_{0}|&\geq&\int^{t_{1}}_{t_{0}}\left|\frac{\partial}{\partial
t}{\rm log} g_{k}(x,t)(V,V)\right|dt \\
&\geq&\left|\int^{t_{1}}_{t_{0}}\frac{\partial}{\partial t}{\rm
log}g_{k}(t)(V,V)dt\right| \\
&=&\left|{\rm
log}\frac{g_{k}(x,t_{1})(V,V)}{g_{k}(x,t_{0})(V,V)}\right|
\end{eqnarray*}
and hence we conclude that
\begin{equation*}
e^{-\overline{C}|t_{1}-t_{0}|}g_{k}(x,t_{0})(V,V)\leq
g_{k}(x,t_{1})(V,V)\leq
e^{\overline{C}|t_{1}-t_{0}|}g_{k}(x,t_{0})(V,V).
\end{equation*}
From the assumption (i), it immediately deduces from above that
\begin{equation*}
C^{-1}e^{-\overline{C}|t_{1}-t_{0}|}g(x)(V,V)\leq
g_{k}(x,t_{1})(V,V)\leq Ce^{\overline{C}|t_{1}-t_{0}|}g(x)(V,V).
\end{equation*}
Since $t_{1}$ was arbitrary, the first part is proved. From the
definition(or see \cite{CCGGIIKLLN}, eq. (37), page 134), we have
\begin{equation*}
(g_{k})^{ec}({}^{g}\nabla_{a}(g_{k})_{bc}+{}^{g}\nabla_{b}(g_{k})_{ac}
-{}^{g}\nabla_{c}(g_{k})_{ab})=2({}^{g_{k}}\Gamma)^{e}_{ab}-2({}^{g}\Gamma)^{e}_{ab}.
\end{equation*}
Thus $|{}^{g_{k}}\Gamma(x,t)-{}^{g}\Gamma(x)|_{g(x)}\leq
C|{}^{g}\nabla g_{k}(x,t)|_{g_{k}(x)}$. On the other hand,
\begin{equation*}
{}^{g}\nabla_{a}(g_{k})_{bc}=(g_{k})_{eb}[({}^{g_{k}}\Gamma)^{e}_{ac}-({}^{g}\Gamma)^{e}_{ac}]+(g_{k})_{ec}[({}^{g_{k}}\Gamma)^{e}_{ab}-({}^{g}\Gamma)^{e}_{ab}],
\end{equation*}
it follows that $|{}^{g}\nabla g_{k}(x,t)|_{g_{k}(x,t)}\leq
C|{}^{g_{k}}\Gamma(x,t)-{}^{g}\Gamma(x)|_{g_{k}(x,t)}$ and therefore
\begin{equation}
{}^{g}\nabla g_{k} \ \text{is equivalent to} \ {}^{g_{k}}\Gamma-{}^{g}\Gamma={}^{g_{k}}\nabla-{}^{g}\nabla. \label{5.2}
\end{equation}
The
evolution equation for ${}^{g}\Gamma$ is
\begin{eqnarray*}
\frac{\partial}{\partial
t}({}^{g_{k}}\Gamma)^{c}_{ab}&=&-(g_{k})^{cd}[({}^{g_{k}}\nabla)_{a}({\rm Ric}(g_{k}))_{bd}+({}^{g_{k}}\nabla)_{b}({\rm Ric}(g_{k}))_{ad} \\
&&- \ ({}^{g_{k}}\nabla)_{d}({\rm Ric}(g_{k}))_{ab}] \\
&&+ \ \frac{1}{4}(g_{k})^{cd}[({}^{g_{k}}\nabla)_{a}(h_{k})_{bd}+({}^{g_{k}}\nabla)_{b}(h_{k})_{ad}-({}^{g_{k}}\nabla)_{d}(h_{k})_{ab}].
\end{eqnarray*}
Since ${}^{g}\Gamma$ does not depend on $t$, it follows from the
assumptions that
\begin{eqnarray*}
\left|\frac{\partial}{\partial
t}({}^{g_{k}}\Gamma-{}^{g}\Gamma)\right|_{g_{k}}&\leq&C|{}^{g_{k}}\nabla({\rm Ric}(g_{k}))|_{g_{k}}+C|{}^{g_{k}}\nabla(h_{k})|_{g_{k}} \\
&\leq&CC'_{1}+C|{}^{g_{k}}\nabla H_{k}|_{g_{k}}\cdot|H_{k}|_{g_{k}}
\ \leq \ C'_{1}.
\end{eqnarray*}
Integrating on both sides,
\begin{eqnarray*}
C'_{1}|t_{1}-t_{0}|&\geq&\left|\int^{t_{1}}_{t_{0}}\frac{\partial}{\partial
t}({}g^{k}\Gamma(t)-{}^{g}\Gamma)dt\right|_{g_{k}}
\\
&\geq&|{}^{g_{k}}\Gamma(t_{1})-{}^{g}\Gamma|_{g_{k}}-|{}^{g_{k}}\Gamma(t_{0})-{}^{g}\Gamma|_{g_{k}}.
\end{eqnarray*}
Hence we obtain
\begin{eqnarray*}
|{}^{g_{k}}\Gamma(t)-{}^{g}\Gamma|_{g_{k}}&\leq&C'_{1}|t_{1}-t_{0}|+|{}^{g_{k}}\Gamma(t_{0})-{}^{g}\Gamma|_{g_{k}}
\\
&\leq&C'_{1}|t_{1}-t_{0}|+C|{}^{g}\nabla g_{k}(t_{0})|_{g_{k}} \\
&\leq&C'_{1}|t-t_{0}|+C|{}^{g}\nabla g_{k}(t_{0})|_{g} \\
&\leq&C'_{1}|t-t_{0}|+C_{1}.
\end{eqnarray*}
The equivalency of metrics tells us that
\begin{eqnarray*}
|{}^{g}\nabla g_{k}(t)|_{g}&\leq& B(t,t_{0})^{3/2}|{}^{g}\nabla
g_{k}(t)|_{g_{k}}  \ \leq \ B(t,t_{0})^{3/2}\cdot C|{}^{g_{k}}\Gamma(t)-{}^{g}\Gamma|_{g_{k}}\\
&\leq& B(t,t_{0})^{3/2}(C'_{1}|t-t_{0}|+C').
\end{eqnarray*}
Since $|t-t_{0}|\leq\psi-\beta$, it follows that $|{}^{g}\nabla
g_{k}(t)|_{g}\leq\widetilde{C}_{1,0}$ for some constant
$\widetilde{C}_{1,0}$. But $g$ and $g_{k}$ are equivalent, we have
\begin{equation*}
|H_{k}(t)|_{g}\leq C|H_{k}(t)|_{g_{k}}\leq
CC'_{1}=\widetilde{C}_{1,0}.
\end{equation*}
From the assumptions, we also have
\begin{eqnarray*}
|{}^{g}\nabla H_{k}|_{g}&\leq&|({}^{g}\nabla-{}^{g_{k}}\nabla)H_{k}+{}^{g_{k}}\nabla H_{k}|_{g} \\
&\leq&C|{}^{g}\nabla g_{k}|_{g}\cdot|H_{k}|_{g}+C|{}^{g_{k}}\nabla
H_{k}|_{g_{k}} \\
&\leq&CC'_{1}+C\widetilde{C}_{1,0}\widetilde{C}_{1,0}=:\widetilde{C}_{2,0}.
\end{eqnarray*}
Moreover,
\begin{eqnarray*}
\frac{\partial}{\partial t}{}^{g}\nabla
H_{k}&=&{}^{g}\nabla(\Delta_{g_{k}}H_{k}+{\rm Rm}(g_{k})\ast H_{k})
\\
&=&({}^{g}\nabla-{}^{g_{k}}\nabla)\Delta_{g_{k}}H_{k}+{}^{g_{k}}\nabla\Delta_{g_{k}}H_{k}
\\
&&+ \ {}^{g}\nabla{\rm Rm}(g_{k})\ast H_{k}+{\rm
Rm}(g_{k})\ast{}^{g}\nabla H_{k}
\end{eqnarray*}
where $\Delta_{g_{k}}$ is the Laplace operator associated to
$g_{k}$. Hence
\begin{eqnarray*}
\left|\frac{\partial}{\partial t}{}^{g}\nabla
H_{k}\right|_{g}&\leq&C|{}^{g}\nabla g_{k}|_{g}\cdot|\Delta_{g_{k}}
H_{k}|_{g_{k}}+C|{}^{g_{k}}\nabla\Delta_{g_{k}}H_{k}|_{g} \\
&&+ \ C|{}^{g}\nabla{\rm Rm}(g_{k})|_{g}\cdot|H_{k}|_{g}+C|{\rm
Rm}(g_{k})|_{g}\cdot|{}^{g}\nabla H_{k}|_{g}\\
&\leq&\widetilde{C}_{2,1}.
\end{eqnarray*}
For higher derivatives we claim that
\begin{equation}
|{}^{g}\nabla^{p}{\rm Ric}(g_{k})|_{g}\leq
C''_{p}|{}^{g}\nabla^{p}g_{k}|_{g}+C'''_{p}, \ \ \
|{}^{g}\nabla^{p}g_{k}|_{g}+|{}^{g}\nabla^{p-1}H_{k}|_{g}
\leq\widetilde{C}_{p,0},\label{5.3}
\end{equation}
for all $p\geq1$, where $C''_{p}, C'''_{p}$, and
$\widetilde{C}_{p,0}$ are constants independent of $k$. For $p=1$,
we have proved the second inequality, so we suffice to prove the
first one with $p=1$. Indeed,
\begin{eqnarray*}
|{}^{g}\nabla{\rm Ric}(g_{k})|_{g}&\leq&
C|({}^{g}\nabla-{}^{g_{k}}\nabla){\rm Ric}(g_{k})+{}^{g_{k}}\nabla{\rm Ric}(g_{k})|_{g_{k}} \\
&\leq& C|{}^{g}\Gamma-{}^{g_{k}}\Gamma|_{g}\cdot|{\rm Ric}(g_{k})|_{g_{k}}+C|{}^{g_{k}}\nabla{\rm Ric}(g_{k})|_{g_{k}} \\
&\leq&C''_{1}|{}^{g}\nabla g_{k}|_{g}+C'''_{1}.
\end{eqnarray*}
Suppose the claim holds for all $p<N$ ($N\geq2$), we shall show that
it also holds for $p=N$. From
\begin{eqnarray*}
|{}^{g}\nabla^{N}{\rm Ric}(g_{k})|_{g}&=&\left|\sum^{N}_{i=1}{}^{g}\nabla^{N-i}({}^{g}\nabla-{}^{g_{k}}\nabla){}^{g_{k}}\nabla^{i-1}{\rm Ric}(g_{k})+{}^{g_{k}}\nabla^{N}{\rm Ric}(g_{k})\right|_{g} \\
&\leq&\sum^{N}_{i=1}\left|{}^{g}\nabla^{N-i}({}^{g}\nabla-{}^{g_{k}}\nabla){}^{g_{k}}\nabla^{i-1}{\rm Ric}(g_{k})\right|_{g}+|{}^{g_{k}}\nabla^{N}{\rm Ric}(g_{k})|_{g}
\end{eqnarray*}
we estimate each term. For $i=1$, by induction and assumptions we
have
\begin{eqnarray*}
& & |{}^{g}\nabla^{N-1}({}^{g}\nabla-{}^{g_{k}}\nabla){\rm Ric}(g_{k})|_{g} \\
&\leq&C|{}^{g}\nabla^{N-1}({}^{g}\nabla
g_{k}\cdot{\rm Ric}(g_{k}))|_{g} \\
&\leq&
C\left|\sum^{N-1}_{j=0}\binom{N-1}{j}{}^{g}\nabla^{N-1-j}({}^{g}\nabla
g_{k})\cdot{}^{g}\nabla^{j}({\rm Ric}(g_{k}))\right|_{g} \\
&\leq&C\sum^{N-1}_{j=0}\binom{N-1}{j}|{}^{g}\nabla^{N-j}g_{k}|_{g}\cdot|{}^{g}\nabla^{j}{\rm Ric}(g_{k})|_{g} \\
&\leq&C\sum^{N-1}_{i=0}\binom{N-1}{j}(C''_{j}|{}^{g}\nabla^{j}g_{k}|_{g}+C'''_{j})|{}^{g}\nabla^{N-j}g_{k}|_{g}
\\
&\leq&C\sum^{N-1}_{j=0}\binom{N-1}{j}(C''_{j}\widetilde{C}_{j,0}+C'''_{j})|{}^{g}\nabla^{N-j}g_{k}|_{g}
\\
&=&C(N-1)(C''_{0}\widetilde{C}_{j,0}+C'''_{0})|{}^{g}\nabla^{N}g_{k}|_{g}
\\
&&+ \ C\sum^{N-1}_{j=1}\binom{N-1}{j}(C''_{j}\widetilde{C}_{j,0}+C'''_{j})\widetilde{C}_{N-j,0}
\\
&\leq&C''_{N}|{}^{g}\nabla^{N}g_{k}|_{g}+C'''_{N}.
\end{eqnarray*}
For $i\geq2$, we have
\begin{eqnarray*}
& &
|{}^{g}\nabla^{N-i}({}^{g}\nabla-{}^{g_{k}}\nabla){}^{g_{k}}\nabla^{i-1}{\rm Ric}(g_{k})|_{g} \\
&\leq&C|{}^{g}\nabla^{N-i}({}^{g}\nabla
g_{k}\cdot{}^{g_{k}}\nabla^{i-1}{\rm Ric}(g_{k}))|_{g} \\
&\leq&
C\sum^{N-i}_{j=0}\binom{N-i}{j}|{}^{g}\nabla^{N-i-j+1}g_{k}|_{g}\cdot|{}^{g}\nabla^{j}\cdot{}^{g_{k}}\nabla^{i-1}{\rm Ric}(g_{k})|_{g}.
\end{eqnarray*}
If $j=0$, then
\begin{equation*}
|{}^{g_{k}}\nabla^{i-1}{\rm Ric}(g_{k})|_{g}\leq
C''_{i-1}|{}^{g}\nabla^{i-1}g_{k}|_{g}+C'''_{i-1}\leq
C''_{i-1}\widetilde{C}_{i-1,0}+C'''_{i-1}.
\end{equation*}
Suppose in the following that $j\geq1$. Hence
\begin{eqnarray*}
|{}^{g}\nabla^{j}\cdot{}^{g_{k}}\nabla^{i-1}{\rm Ric}(g_{k})|_{g}&=&\left|(({}^{g}\nabla-{}^{g_{k}}\nabla)+{}^{g_{k}}\nabla)^{j}\cdot
{}^{g_{k}}\nabla^{i-1}{\rm Ric}(g_{k})\right|_{g} \\
&\leq&C\sum^{j}_{l=0}\binom{j}{l}|{}^{g}\nabla^{l}g_{k}|_{g}\cdot|{}^{g_{k}}\nabla^{j-l+i-1}{\rm Ric}(g_{k})|_{g} \\
&\leq&C\sum^{j}_{l=0}\binom{j}{l}\widetilde{C}_{l,0}
(C''_{j-l+i-1}\widetilde{C}_{j-l+i-1,0}+C'''_{j-l+i-1}),
\end{eqnarray*}
where we make use of (\ref{5.2}) from first line to second line. Combining these inequalities, we get
\begin{equation*}
|{}^{g}\nabla^{N}{\rm Ric}(g_{k})|_{g}\leq
C''_{N}|{}^{g}\nabla^{N}g_{k}|_{g}+C'''_{N}.
\end{equation*}
Similarly, we have
\begin{equation*}
|{}^{g}\nabla^{N}h_{k}|_{g}\leq
C''_{N}|{}^{g}\nabla^{N}g_{k}|_{g}+C'''_{N}.
\end{equation*}
Since $\frac{\partial}{\partial t}g_{k}=-2{\rm Ric}(g_{k})+\frac{1}{2}h_{k}$, it follows that
\begin{eqnarray*}
\frac{\partial}{\partial
t}{}^{g}\nabla^{N}g_{k}&=&{}^{g}\nabla^{N}(-2{\rm Ric}(g_{k})+\frac{1}{2}h_{k}) \\
\frac{\partial}{\partial
t}|{}^{g}\nabla^{N}g_{k}|^{2}_{g}&\leq&\left|\frac{\partial}{\partial
t}{}^{g}\nabla^{N}g_{k}\right|^{2}_{g}+|{}^{g}\nabla^{N}g_{k}|^{2}_{g}
\\
&\leq&8|{}^{g}\nabla^{N}{\rm Ric}(g_{k})|^{2}_{g}+\frac{1}{2}|{}^{g}\nabla^{N}h_{k}|^{2}_{g}+|{}^{g}\nabla^{N}g_{k}|^{2}_{g}
\\
&\leq&(1+18(C''_{N})^{2})|{}^{g}\nabla^{N}g_{k}|^{2}_{g}+18(C''_{N})^{2}.
\end{eqnarray*}
Integrating the above inequality, we get $|{}^{g}\nabla
g_{k}|_{g}\leq\widetilde{C}_{N,0}$ and therefore
$|{}^{g}\nabla^{N}h_{k}|_{g}\leq\widetilde{C}_{N+1,0}$. We have
proved lemma for $q=0$. When $g\geq1$, then
\begin{equation*}
\frac{\partial^{q}}{\partial
t^{q}}{}^{g}\nabla^{p}g_{k}(t)={g}\nabla^{p}\frac{\partial^{q-1}}{\partial
t^{q-1}}\left(-2{\rm Ric}(g_{k}(t))+\frac{1}{2}h_{k}(t)\right).
\end{equation*}
Using the evolution equations for ${\rm Rm}(g_{k}(t))$ and
$h_{k}(t)$, combining the induction to $q$ and using the above
method, we have $|\frac{\partial^{q}}{\partial
t^{q}}{}^{g}\nabla^{p}g_{k}(t)|_{g}+|\frac{\partial^{q}}{\partial
t^{q}}{}^{g}\nabla^{p-1}h_{k}(t)|_{g}\leq\widetilde{C}_{p,q}$.
\end{proof}

%%%%%%%%%%%%%%%%%%%%%%%%%%%%%%%%%%%%%%%%%%%%%%%%%%%%%%%%%%%%%%%%%%%%%%%%%%%%%%
\section{Generalization}\label{6}
%%%%%%%%%%%%%%%%%%%%%%%%%%%%%%%%%%%%%%%%%%%%%%%%%%%%%%%%%%%%%%%%%%%%%%%%%%%%%%

In this section, we generalize the main results in Sec. 4 and Sec. 5
to a kind of generalized Ricci flow in which the local existence has
been established \cite{HHKL}.

Let $(M,g_{ij}(x))$ be an $n$-dimensional closed Riemannian
manifold and let $A=\{A_{i}\}$ and $B=\{B_{ij}\}$ denote a one form
and two form respectively. Set $F=dA$ and $H=dB$. The authors in \cite{HHKL}
proved that there exists a constant $T>0$ such that the evolution
equations
\begin{eqnarray*}
\frac{\partial}{\partial
t}g_{ij}(x,t)&=&-2R_{ij}(x,t)+\frac{1}{2}h_{ij}(x,t)+2f_{jk}(x,t),
\ \ \ g_{ij}(x,0)=g_{ij}(x),\\
\frac{\partial}{\partial t}A_{i}(x,t)&=&-2\nabla_{k}F_{i}{}^{k}(x,t), \ \ \ A_{i}(x,0)=A_{i}(x),\\
\frac{\partial}{\partial
t}B_{ij}(x,t)&=&3\nabla_{k}H^{k}{}_{ij}(x,t), \ \ \
B_{ij}(x,0)=B_{ij}(x)
\end{eqnarray*}
has a unique smooth solution on $m\times[0,T)$, where
$h_{ij}=H_{ikl}H_{j}{}^{kl}$ and $f_{ij}=F_{i}{}^{k}F_{jk}$.
We call it as ${\rm RF}(A,B)$. According to the definition of the adjoint
operator $d^{\ast}$, we have
\begin{equation}
(d^{\ast}F)_{i}=2\nabla_{k}F_{i}{}^{k}, \ \ \
(d^{\ast}H)_{ij}=-3\nabla_{k}H^{k}{}_{ij},\label{6.1}
\end{equation}
and hence
\begin{eqnarray}
\frac{\partial}{\partial t}F(x,t)&=&-dd^{\ast}_{g(x,t)}F \ = \
\Delta_{{\rm HL},g(x,t)}F \ = \ \Delta F+{\rm Rm}\ast F, \label{6.2}\\
\frac{\partial}{\partial t}H(x,t)&=&-dd^{\ast}_{g(x,t)}H \ = \
\Delta_{{\rm HL},g(x,t)}H \ = \ \Delta H+{\rm Rm}\ast H.\label{6.3}
\end{eqnarray}

They also derived the evolution equations of curvatures:
\begin{eqnarray*}
\frac{\partial}{\partial t}R_{ijk\ell}&=&\Delta
R_{ijk\ell}+2(B_{ijk\ell}-B_{ij\ell k}-B_{i\ell jk}+B_{ikj\ell}) \\
&&- \ g^{pq}(R_{pjk\ell}R_{qi}+R_{ipk\ell}R_{qj}+R_{ijp\ell}
R_{qk}+R_{ijkp}R_{q\ell})
\\
&&+ \ \frac{1}{4}[\nabla_{i}\nabla_{\ell}(H_{kpq}H_{j}{}^{pq})
-\nabla_{i}\nabla_{k}(H_{jpq}H_{\ell}{}^{pq})
\\
&&- \ \nabla_{j}\nabla_{\ell}(H_{kpq}H_{i}{}^{pq})
+\nabla_{j}\nabla_{k}(H_{ipq}H_{\ell}{}^{pq})]
\\
&&+ \ \frac{1}{4}g^{rs}(H_{kpq}H_{r}^{pq}R_{ijs\ell}
+H_{rpq}H_{\ell}{}^{pq}R_{ijks})
\\
&&+ \ \nabla_{i}\nabla_{\ell}(F_{k}{}^{p}F_{jp})-\nabla_{i}
\nabla_{k}(F_{j}{}^{p}F_{\ell p})
-\nabla_{j}\nabla_{\ell}(F_{k}{}^{p}F_{ip})
+\nabla_{j}\nabla_{k}(F_{i}{}^{p}F_{\ell p})
\\
&&+ \ g^{rs}(F_{k}{}^{p}F_{rp}R_{ijs\ell}+F_{r}{}^{p}F_{\ell p}R_{ijks}).
\end{eqnarray*}
Under our notation, it can be rewritten as
\begin{eqnarray}
\frac{\partial}{\partial t}{\rm Rm}&=&\Delta{\rm
Rm}+\sum_{i+j=0}\nabla^{i}{\rm Rm}\ast\nabla^{j}{\rm
Rm}+\sum_{i+j=0+2}\nabla^{i}H\ast\nabla^{j}H \nonumber\\
&+&\sum_{i+j=0+2}\nabla^{i}F\ast\nabla^{j}F+\sum_{i+j+k=0}\nabla^{i}H\ast\nabla^{j}H\ast\nabla^{k}{\rm
Rm} \label{6.4}\\
&+&\sum_{i+j+k=0}\nabla^{i}F\ast\nabla^{j}F\ast\nabla^{k}{\rm Rm}.\nonumber
\end{eqnarray}

As before, we have

\begin{proposition} \label{p6.1}For ${\rm RF}(A,B)$ and any
nonnegative integer $\ell$ we
have
\begin{eqnarray}
\frac{\partial}{\partial t}\nabla^{\ell}{\rm
Rm}&=&\Delta(\nabla^{l}{\rm Rm})+\sum_{i+j=\ell}\nabla^{i}{\rm
Rm}\ast\nabla^{j}{\rm Rm}+\sum_{i+j=\ell+2}\nabla^{i}H\ast\nabla^{j}H \nonumber\\
&&+ \ \sum_{i+j=\ell+2}\nabla^{i}F\ast\nabla^{j}F+\sum_{i+j+k=\ell}\nabla^{i}H\ast\nabla^{j}H\ast\nabla^{k}{\rm
Rm}\label{6.5}\\
&&+ \ \sum_{i+j+k=\ell}\nabla^{i}F\ast\nabla^{j}F\ast\nabla^{k}{\rm Rm}.\nonumber
\end{eqnarray}
In particular,
\begin{eqnarray*}
\frac{\partial}{\partial t}|\nabla^{l}{\rm
Rm}|^{2}&\leq&\Delta|\nabla^{l}{\rm Rm}|^{2}-2|\nabla^{\ell+1}{\rm
Rm}|^{2} \\
&&+ \ C\cdot\sum_{i+j=\ell}|\nabla^{i}{\rm Rm}|\cdot|\nabla^{j}{\rm
Rm}|\cdot|\nabla^{\ell}{\rm Rm}| \\
&&+ \ C\cdot\sum_{i+j=\ell+2}|\nabla^{i}H|\cdot|\nabla^{j}H|\cdot|\nabla^{\ell}{\rm
Rm}| \\
&&+ \ C\cdot\sum_{i+j=\ell+2}|\nabla^{i}F|\cdot|\nabla^{j}F|\cdot|\nabla^{\ell}{\rm
Rm}| \\
&&+ \ C\cdot\sum_{i+j+k=\ell}|\nabla^{i}H|\cdot|\nabla^{j}H|\cdot|\nabla^{k}{\rm
Rm}|\cdot|\nabla^{\ell}{\rm Rm}| \\
&&+ \ C\cdot\sum_{i+j+k=\ell}|\nabla^{i}F|\cdot|\nabla^{j}F|\cdot|\nabla^{k}{\rm
Rm}|\cdot|\nabla^{\ell}{\rm Rm}|.
\end{eqnarray*}
\end{proposition}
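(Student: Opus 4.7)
The proof proceeds by induction on $l$, paralleling Proposition 4.1 and Corollary 4.2 with the $F$-sums bookkept exactly as the $H$-sums. The $l=0$ case is the evolution equation displayed immediately before the statement, so assume the formula holds at level $l$.

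For the inductive step, I would apply $\nabla$ to the hypothesis and use the two commutator identities from (4.1), namely
\[
\tfrac{\partial}{\partial t}\nabla A - \nabla\tfrac{\partial}{\partial t}A = A\ast\nabla v, \qquad \nabla(\Delta A) - \Delta(\nabla A) = \nabla\mathrm{Rm}\ast A + \mathrm{Rm}\ast\nabla A,
\]
where $v = \partial_t g = -2\mathrm{Rc} + \tfrac12 h + 2f$, so schematically $\nabla v = \nabla\mathrm{Rm} + H\ast\nabla H + F\ast\nabla F$. Taking $A = \nabla^l\mathrm{Rm}$, substituting the inductive formula for $\partial_t \nabla^l\mathrm{Rm}$, and distributing the outer $\nabla$ across each $\ast$-product by Leibniz shifts one derivative onto a single factor in every summand, raising the total order from $l$ to $l+1$. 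The remainder $\nabla^l\mathrm{Rm}\ast\nabla v$ contributes terms of schematic type $\nabla\mathrm{Rm}\ast\nabla^l\mathrm{Rm}$, $H\ast\nabla H\ast\nabla^l\mathrm{Rm}$, and $F\ast\nabla F\ast\nabla^l\mathrm{Rm}$, each absorbed into the corresponding level-$(l+1)$ summation. Commuting the outer $\nabla$ past $\Delta$ yields $\Delta(\nabla^{l+1}\mathrm{Rm})$ modulo additional terms of type $\nabla\mathrm{Rm}\ast\nabla^l\mathrm{Rm}$ and $\mathrm{Rm}\ast\nabla^{l+1}\mathrm{Rm}$, which again fit inside $\sum_{i+j=l+1}\nabla^i\mathrm{Rm}\ast\nabla^j\mathrm{Rm}$. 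Collecting terms yields the evolution equation at level $l+1$.

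For the inequality, I would compute
\[
\tfrac{\partial}{\partial t}|\nabla^l\mathrm{Rm}|^2 = 2\langle \nabla^l\mathrm{Rm},\, \tfrac{\partial}{\partial t}\nabla^l\mathrm{Rm}\rangle + (\partial_t g)\text{-contractions},
\]
where the $(\partial_t g)$-contractions produce only terms already of the listed schematic types and are absorbed into the sums. Substituting the evolution equation just proved, the Laplacian piece gives $2\langle\nabla^l\mathrm{Rm},\Delta\nabla^l\mathrm{Rm}\rangle = \Delta|\nabla^l\mathrm{Rm}|^2 - 2|\nabla^{l+1}\mathrm{Rm}|^2$; every $\ast$-product term is bounded by $|A\ast B| \leq c(n)|A||B|$ and paired with $\nabla^l\mathrm{Rm}$ by Cauchy--Schwarz, producing the four universal-constant sums claimed.

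The main obstacle is not any individual estimate but the combinatorial bookkeeping: verifying that every term produced by commuting $\nabla$ past $\Delta$ and $\partial_t$, by Leibniz expansion over the $\ast$-products, and by the $(\partial_t g)$-contractions really does fall into one of the prescribed schematic classes at level $l+1$. The $F$-terms introduce no new difficulty, since by (6.3) the $2$-form $F$ satisfies $\partial_t F = \Delta F + \mathrm{Rm}\ast F$, the same schematic equation as $H$, and the $F$-sums therefore shadow the $H$-sums line by line throughout the induction.
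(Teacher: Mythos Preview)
Your proposal is correct and follows essentially the same approach as the paper. The paper does not give a separate proof of Proposition 6.1; it simply writes ``As before, we have'' and relies on the argument of Proposition 4.1, which is precisely the induction on $l$ via the commutator identities (4.1) that you outline, with the $F$-sums carried alongside the $H$-sums since $f=F\ast F$ enters $\partial_t g$ exactly as $h=H\ast H$ does.
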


Since $\frac{\partial}{\partial t}F=\Delta F+{\rm Rm}\ast F$ it
follows that
\begin{eqnarray*}
\frac{\partial}{\partial t}\nabla F&=&\nabla\frac{\partial}{\partial
t}F+F\ast\nabla({\rm Rm}+H\ast H+F\ast F) \\
&=&\nabla(\Delta F+{\rm Rm}\ast F)+F\ast\nabla{\rm Rm}+F\ast
H\ast\nabla H+F\ast F\ast\nabla F \\
&=&\Delta(\nabla F)+\nabla{\rm Rm}\ast F+{\rm Rm}\ast\nabla F+F\ast H\ast\nabla H+F\ast F\ast\nabla F.
\end{eqnarray*}
It can be expressed as
\begin{eqnarray*}
\frac{\partial}{\partial t}\nabla F&=&\Delta(\nabla
F)+\sum_{i+j=1}\nabla^{i}F\ast\nabla^{j}{\rm Rm} \\
&&+ \ \sum_{i+j+k=1}\nabla^{i}F\ast\nabla^{j}F\ast\nabla^{k}F+\sum^{1-1}_{i=0}\sum^{1-i}_{j=0}\nabla^{i}F\ast\nabla^{j}H\ast\nabla^{1-i-j}H.
\end{eqnarray*}

More generally, we can show that

\begin{proposition} \label{p6.2}For ${\rm RF}(A,B)$ and any positive
integer $\ell$ we
have
\begin{eqnarray*}
\frac{\partial}{\partial
t}\nabla^{\ell}F&=&\Delta(\nabla^{\ell}F)+\sum_{i+j=\ell}\nabla^{i}F\ast\nabla^{j}{\rm
Rm} \\
&&+ \ \sum_{i+j+k=\ell}\nabla^{i}F\ast\nabla^{j}F\ast\nabla^{k}F
+\sum^{\ell-1}_{i=0}\sum^{\ell-i}_{j=0}\nabla^{i}F\ast\nabla^{j}H\ast\nabla^{\ell-i-j}H.
\end{eqnarray*}
In particular,
\begin{eqnarray*}
\frac{\partial}{\partial
t}|\nabla^{\ell}F|^{2}&\leq&\Delta|\nabla^{\ell}F|^{2}
-2|\nabla^{\ell+1}F|^{2}+C\cdot\sum_{i+j=\ell}|\nabla^{i}F|\cdot|\nabla^{j}{\rm
Rm}|\cdot|\nabla^{\ell}F|
\\
&&+ \ C\cdot\sum_{i+j+k=\ell}|\nabla^{i}F|\cdot|\nabla^{j}F|\cdot|\nabla^{k}F|\cdot|\nabla^{l}F|
\\
&&+ \ C\cdot\sum^{\ell-1}_{i=0}\sum^{\ell-i}_{j=0}|\nabla^{i}F
|\cdot|\nabla^{j}H|\cdot|\nabla^{\ell-i-j}H|\cdot|\nabla^{\ell}F|.
\end{eqnarray*}
\end{proposition}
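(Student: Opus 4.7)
The strategy is induction on $l$, with the base case $l=1$ being the direct computation displayed immediately before the proposition. For the inductive step, I would assume the identity for $l$ and derive it for $l+1$ by applying the two commutator identities in (4.1), namely
\begin{equation*}
\frac{\partial}{\partial t}\nabla A - \nabla\frac{\partial}{\partial t}A = A\ast\nabla v, \qquad \nabla(\Delta A) - \Delta(\nabla A) = \nabla\mathrm{Rm}\ast A + \mathrm{Rm}\ast\nabla A,
\end{equation*}
to $A=\nabla^{l}F$, where here $v_{ij} = -2R_{ij} + \tfrac12 h_{ij} + 2f_{ij}$ so that
\begin{equation*}
\nabla v = \nabla\mathrm{Rm} + H\ast\nabla H + F\ast\nabla F.
\end{equation*}
Substituting the inductive hypothesis for $\frac{\partial}{\partial t}\nabla^{l}F$ into $\nabla\frac{\partial}{\partial t}\nabla^{l}F$ and applying $\nabla$ termwise via the Leibniz rule converts each schematic $\ast$-product $\nabla^{a}X\ast\nabla^{b}Y$ into $\nabla^{a+1}X\ast\nabla^{b}Y + \nabla^{a}X\ast\nabla^{b+1}Y$, and likewise for triple products.

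The heart of the step is bookkeeping: I would verify that each resulting term still fits one of the three schematic sums with the degree count raised by one. The terms generated by the commutator $[\partial_t,\nabla]$ (namely $\nabla^{l}F\ast\nabla\mathrm{Rm}$, $\nabla^{l}F\ast H\ast\nabla H$, $\nabla^{l}F\ast F\ast\nabla F$) land in the three sums with indices $(l,1)$, $(l,0,1)$, and $(l,0,1)$ respectively; those generated by $[\nabla,\Delta]\nabla^{l}F$ land in the first sum. The remaining contributions come from differentiating the inductive sums, and the index constraints $i+j=l+1$, $i+j+k=l+1$, and $0\le i\le l$, $0\le j\le l+1-i$ for the $F\!\ast\!H\!\ast\!H$ family are preserved because raising one of $i,j,k$ by one keeps the total sum correct and does not violate the upper bound $i\le l$ on the index of $F$ in the mixed sum (this asymmetric bound is the trace of the fact that these terms originate from $F\ast\nabla(H\ast H)$ via $F\ast\nabla v$, so one derivative is always on an $H$ factor).

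For the stated evolution inequality, I would use the identity
\begin{equation*}
\frac{\partial}{\partial t}|\nabla^{l}F|^{2} = 2\bigl\langle\nabla^{l}F,\tfrac{\partial}{\partial t}\nabla^{l}F\bigr\rangle + (\partial_{t}g^{-1})\text{-contractions},
\end{equation*}
together with $\Delta|\nabla^{l}F|^{2} = 2\langle\nabla^{l}F,\Delta\nabla^{l}F\rangle + 2|\nabla^{l+1}F|^{2}$. Substituting the evolution equation just proved and applying Cauchy--Schwarz to every inner product of $\nabla^{l}F$ against a schematic $\ast$-product yields the three summed upper bounds, while the $-2|\nabla^{l+1}F|^{2}$ term comes with the expected sign. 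The $(\partial_{t}g^{-1})$-contractions, which equal Ricci, $h$, and $f$ times $|\nabla^{l}F|^{2}$, are absorbed into the first and the third sums with indices $(0,0)$ and $(0,0,l)$ respectively.

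The only genuine obstacle is the combinatorial verification that differentiating the asymmetric triple sum $\sum_{i=0}^{l-1}\sum_{j=0}^{l-i}\nabla^{i}F\ast\nabla^{j}H\ast\nabla^{l-i-j}H$ produces terms entirely within $\sum_{i'=0}^{l}\sum_{j'=0}^{l+1-i'}\nabla^{i'}F\ast\nabla^{j'}H\ast\nabla^{l+1-i'-j'}H$; this is a short check that I would carry out by splitting the three Leibniz contributions and reindexing, noting that the worst case $i'=l$ is supplied by the $\nabla^{i+1}F$ branch and by the $\nabla^{l}F\ast H\ast\nabla H$ contribution from $\nabla v$. Everything else is routine.
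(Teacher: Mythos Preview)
Your induction is exactly the paper's approach: the text displays the base case $l=1$ immediately before the proposition and leaves the general case implicit, the scheme being identical to the proofs given for Propositions~4.1 and~4.3. One small slip in your absorption remark: the $(\partial_t g^{-1})$-contribution from $h=H\ast H$ produces a term $|H|^{2}|\nabla^{l}F|^{2}$ that would require $i=l$ in the third sum, just outside the stated range $i\le l-1$ --- this is a harmless imprecision shared by the paper's own inequality, since $|H|$ is bounded in every application, but your quoted indices ``$(0,0)$ and $(0,0,l)$'' do not account for it.
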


Similarly, we obtain

\begin{proposition} \label{p6.3}For ${\rm RF}(A,B)$ and any positive integer $l$ we
have
\begin{eqnarray*}
\frac{\partial}{\partial
t}\nabla^{\ell}H&=&\Delta(\nabla^{\ell}H)+\sum_{i+j=\ell}
\nabla^{i}H\ast\nabla^{j}{\rm
Rm} \\
&&+ \ \sum_{i+j+k=\ell}\nabla^{i}H\ast\nabla^{j}H\ast\nabla^{k}H
+\sum^{\ell-1}_{i=0}\sum^{\ell-i}_{j=0}\nabla^{i}H\ast\nabla^{j}
F\ast\nabla^{\ell-i-j}F.
\end{eqnarray*}
In particular,
\begin{eqnarray*}
\frac{\partial}{\partial
t}|\nabla^{\ell}H|^{2}&\leq&\Delta|\nabla^{\ell}H|^{2}
-2|\nabla^{\ell+1}H|^{2}+C\cdot\sum_{i+j=\ell}|\nabla^{i}H
|\cdot|\nabla^{j}{\rm
Rm}|\cdot|\nabla^{\ell}H|
\\
&&+ \ C\cdot\sum_{i+j+k=\ell}|\nabla^{i}H|\cdot|\nabla^{j}H
|\cdot|\nabla^{k}H|\cdot|\nabla^{\ell}H|
\\
&&+ \ C\cdot\sum^{\ell-1}_{i=0}\sum^{\ell-i}_{j=0}|\nabla^{i}H
|\cdot|\nabla^{j}F|\cdot|\nabla^{\ell-i-j}F|\cdot|\nabla^{\ell}H|.
\end{eqnarray*}
\end{proposition}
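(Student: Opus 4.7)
The plan is to prove Proposition 6.3 by induction on $l$, mirroring the argument used for Proposition 4.3 and Proposition 6.2. The starting point is the Bochner-type identity $\frac{\partial}{\partial t}H = \Delta H + \mathrm{Rm}\ast H$, which holds in this extended setting exactly as in Section 4 because $\square_{g(x,t)}H = \Delta_{g(x,t)}H + \mathrm{Rm}\ast H$. The crucial ingredient is the commutator formula from the start of Section 4, namely $\frac{\partial}{\partial t}\nabla A - \nabla\frac{\partial}{\partial t}A = A \ast \nabla v$ where $v = \frac{\partial g}{\partial t}$, combined here with $v = -2\mathrm{Rc}+\tfrac{1}{2}h + 2f$, so that $\nabla v$ contributes terms of the schematic form $\nabla\mathrm{Rm}$, $H\ast\nabla H$, and $F\ast\nabla F$.

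For the base case $l=1$, I would compute
\[
\frac{\partial}{\partial t}\nabla H = \nabla\frac{\partial}{\partial t}H + H\ast\nabla(\mathrm{Rm}+H\ast H+F\ast F),
\]
expand $\nabla(\Delta H + \mathrm{Rm}\ast H)$ using the second commutator $\nabla(\Delta A)-\Delta(\nabla A)=\nabla\mathrm{Rm}\ast A + \mathrm{Rm}\ast\nabla A$, and sort the resulting terms into the three schematic sums: a piece of the form $\nabla^i H \ast \nabla^j \mathrm{Rm}$ with $i+j=1$, a cubic piece in $H$ with total derivative order $1$, and a piece $\nabla^i H \ast \nabla^j F \ast \nabla^{1-i-j}F$. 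This is exactly the $F$-analog of the calculation already carried out for Proposition 6.2.

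The inductive step assumes the formula for $l$ and applies $\nabla$ to it. Using $\frac{\partial}{\partial t}\nabla^{l+1}H = \nabla\frac{\partial}{\partial t}\nabla^l H + \nabla^l H \ast \nabla v$ and then distributing $\nabla$ through each schematic term in the inductive hypothesis, every resulting product raises exactly one total derivative order, so the three types of sums are preserved with $l$ replaced by $l+1$. The $\nabla v$ contribution produces only terms that already fit into one of those three families, so the bookkeeping closes. Once the evolution equation is in hand, the inequality for $|\nabla^l H|^2$ follows from the standard identity $\frac{\partial}{\partial t}|\nabla^l H|^2 = 2\langle \nabla^l H, \frac{\partial}{\partial t}\nabla^l H\rangle + (\text{metric derivative terms absorbed into the schematic sums})$ together with $\Delta|\nabla^l H|^2 \geq 2\langle \Delta \nabla^l H, \nabla^l H\rangle + 2|\nabla^{l+1}H|^2$ and Cauchy-Schwarz on each $\ast$-product.

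The one step that requires genuine care, rather than routine manipulation, is keeping track of the coupling between $H$ and $F$ when $\nabla$ falls on the mixed terms $F\ast F$ and $H\ast H$ coming from $\nabla v$: one must verify that no term of the form $\nabla^a F \ast \nabla^b F \ast \nabla^c H$ with more than two $F$ factors or with the wrong derivative budget escapes the schematic classification. Since $v$ is quadratic in $H$ and in $F$ separately (no $H\ast F$ cross term appears in the metric evolution), each new term produced is either quadratic in $F$ multiplied by one power of $\nabla H$ (falling into $\sum \nabla^i H \ast \nabla^j F \ast \nabla^{l-i-j}F$) or cubic in $H$ (falling into the $H$-cubic sum), or linear in $\mathrm{Rm}$ (falling into the first sum). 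This structural observation is what makes the three families stable under the induction.
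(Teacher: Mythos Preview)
Your proposal is correct and follows essentially the same approach as the paper: the paper itself offers no explicit proof of this proposition beyond the phrase ``Similarly, we obtain,'' deferring to the analogous computations done for $\nabla^l F$ in Proposition~6.2 and for $\nabla^l H$ (without $F$) in Proposition~4.3, which are exactly the base-case-plus-induction arguments you outline. Your additional remark that the absence of an $H\ast F$ cross term in $\partial_t g$ is what keeps the three schematic families closed under the induction is a correct and useful observation that the paper leaves implicit.
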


From the evolution inequalities
\begin{eqnarray*}
\frac{\partial}{\partial t}|H|^{2}&\leq&\Delta|H|^{2}-2|\nabla
H|^{2}+C\cdot|{\rm Rm}|\cdot|H|^{2}, \\
\frac{\partial}{\partial t}|F|^{2}&\leq&\Delta|F|^{2}-2|\nabla
F|^{2}+C\cdot|{\rm Rm}|\cdot|F|^{2},
\end{eqnarray*}
the following theorem is obvious.

\begin{theorem} \label{t6.4}Suppose that $(g(x,t),H(x,t),F(x,t))$ is a solution to
${\rm RF}(A,B)$ on a compact manifold $M^{n}$ for a short time $0\leq
t\leq T$ and $K_{1}, K_{2}, K_{3}$ are arbitrary given nonnegative
constants. Then there exists a constant $C_{n}$ depending only on
$n$ such that if
\begin{equation*}
|{\rm Rm}(x,t)|_{g(x,t)}\leq K_{1}, \ \ \ |H(x)|_{g(x)}\leq K_{2}, \
\ \ |F(x)|_{g(x)}\leq K_{3}
\end{equation*}
for all $x\in M$ and $t\in[0,T]$, then
\begin{equation}
|H(x,t)|_{g(x,t)}\leq K_{2}e^{C_{n}K_{1}t}, \ \ \
|F(x,t)|_{g(x,t)}\leq K_{3}e^{C_{n}K_{1}t},\label{6.6}
\end{equation}
for all $x\in M$ and $t\in[0,T]$.
\end{theorem}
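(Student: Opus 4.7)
The plan is to apply the scalar parabolic maximum principle separately to $|H|^{2}$ and $|F|^{2}$, in direct analogy with the proof of Theorem 4.5. The two evolution inequalities
\begin{equation*}
\frac{\partial}{\partial t}|H|^{2}\leq\Delta|H|^{2}-2|\nabla H|^{2}+C_{n}|{\rm Rm}|\cdot|H|^{2},
\end{equation*}
\begin{equation*}
\frac{\partial}{\partial t}|F|^{2}\leq\Delta|F|^{2}-2|\nabla F|^{2}+C_{n}|{\rm Rm}|\cdot|F|^{2}
\end{equation*}
are already recorded just above the statement of the theorem; they arise from pairing the Bochner-type evolution equations $\partial_{t}H=\Delta H+{\rm Rm}\ast H$ and $\partial_{t}F=\Delta F+{\rm Rm}\ast F$ with $2H$ and $2F$ respectively and discarding the non-problematic cross terms. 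So the analytic content needed for the proof is already in hand.

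Next I would drop the nonpositive gradient terms $-2|\nabla H|^{2}$ and $-2|\nabla F|^{2}$ and insert the hypothesis $|{\rm Rm}(x,t)|_{g(x,t)}\leq K_{1}$ to obtain the closed scalar inequalities
\begin{equation*}
\frac{\partial}{\partial t}|H|^{2}\leq\Delta|H|^{2}+C_{n}K_{1}|H|^{2}, \qquad \frac{\partial}{\partial t}|F|^{2}\leq\Delta|F|^{2}+C_{n}K_{1}|F|^{2}.
\end{equation*}
Now the comparison ODE for the first inequality is $\varphi'(t)=C_{n}K_{1}\varphi(t)$ with $\varphi(0)=K_{2}^{2}$, whose solution is $\varphi(t)=K_{2}^{2}e^{C_{n}K_{1}t}$. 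Since $M$ is compact and $|H(x,0)|^{2}\leq K_{2}^{2}=\varphi(0)$, Hamilton's scalar maximum principle for supersolutions of the heat equation gives $|H(x,t)|^{2}\leq K_{2}^{2}e^{C_{n}K_{1}t}$ for all $x\in M$ and $t\in[0,T]$; taking square roots and absorbing the factor $1/2$ into a (possibly larger) universal constant $C_{n}$ yields $|H(x,t)|_{g(x,t)}\leq K_{2}e^{C_{n}K_{1}t}$.

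The argument for $|F|$ is verbatim the same with $K_{2}$ replaced by $K_{3}$; the two bounds are independent since $H$ and $F$ do not couple in the respective evolution inequalities once $|{\rm Rm}|$ has been controlled by $K_{1}$. There is essentially no obstacle in this step: the entire content of the theorem is already packaged in Proposition 6.2 (which yields $\partial_{t}F=\Delta F+{\rm Rm}\ast F$) and Proposition 6.3 (which yields $\partial_{t}H=\Delta H+{\rm Rm}\ast H$), together with the standard inequality $|T\ast U|\leq C|T||U|$ for the $\ast$-notation. Consequently the proof should occupy only a few lines and mirror that of Theorem 4.5.
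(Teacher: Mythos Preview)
Your proposal is correct and follows exactly the paper's approach: the paper simply states that the theorem is ``obvious'' from the two evolution inequalities for $|H|^{2}$ and $|F|^{2}$ displayed just before it, and the argument you spell out (drop the gradient term, insert the curvature bound, apply the scalar maximum principle, take square roots) is precisely the one-line proof of the analogous Theorem 4.5 carried over verbatim.
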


Parallelling to Theorem \ref{t4.6}, we can prove

\begin{theorem} \label{t6.5}Suppose that $(g(x,t),H(x,t),F(x,t))$ is a solution to
${\rm RF}(A,B)$ on a compact manifold $M^{n}$ and $K$ is an arbitrary
given positive constant. Then for each $\alpha>0$ and each integer
$m\geq1$ there exists a constant $C_{m}$ depending on
$m,n,\max\{\alpha,1\}$, and $K$ such that if
\begin{equation*}
|{\rm Rm}(x,t)|_{g(x,t)}\leq K, \ \ \ |H(x)|_{g(x)}\leq K, \ \ \
|F(x)|_{g(x)}\leq K
\end{equation*}
for all $x\in M$ and $t\in[0,\alpha/K]$, then
\begin{equation}
|\nabla^{m-1}{\rm
Rm}(x,t)|_{g(x,t)}+|\nabla^{m}H(x,t)|_{g(x,t)}
+|\nabla^{m}F(x,t)|_{g(x,t)}\leq\frac{C_{m}}{t^{\frac{m}{2}}},\label{6.7}
\end{equation}
for all $x\in M$ and $t\in(0,\alpha/K]$.
\end{theorem}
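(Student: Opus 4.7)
The plan is to mimic the proof of Theorem 4.6, with $F$ treated symmetrically to $H$. The evolution inequalities in Propositions 6.1, 6.2 and 6.3 have exactly the same schematic form as in the case $F\equiv 0$, except that every sum that previously involved only $H$ now has an analogous $F$-term, and there are symmetric cross-terms of the form $\nabla^i F\ast\nabla^j H\ast\nabla^{l-i-j}H$ (and vice versa). Because these new terms enter $|\nabla^l{\rm Rm}|^2$, $|\nabla^l H|^2$ and $|\nabla^l F|^2$ in the same structural way as the old ones, the induction scheme carries over verbatim once $F$ is included in the test function.

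First, I would apply Theorem 6.4 to reduce to the situation $|H|+|F|\leq C$ uniformly on $[0,\alpha/K]$. For the base case $m=1$, consider
\begin{equation*}
u:=t\bigl(|\nabla H|^2+|\nabla F|^2+|{\rm Rm}|^2\bigr)+\gamma\bigl(|H|^2+|F|^2\bigr).
\end{equation*}
Using the three evolution inequalities, each of the bad terms $t|\nabla^2 H|^2$, $t|\nabla^2 F|^2$ and $t|\nabla{\rm Rm}|^2$ arising from the time derivatives of $t|\nabla H|^2+t|\nabla F|^2+t|{\rm Rm}|^2$ is absorbed by the corresponding $-2|\nabla^{l+1}\cdot|^2$ gradient term, while the new mixed terms like $|\nabla F|\cdot|\nabla H|\cdot|H|$ are dominated by Cauchy--Schwarz. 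Choosing $\gamma$ sufficiently large then gives $\partial_t u\leq\Delta u+C$, and the maximum principle bounds $u$, hence $|{\rm Rm}|$, $|\nabla H|$ and $|\nabla F|$ at time $t$.

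For the induction step, I would define, in analogy with Shi's scheme,
\begin{equation*}
u:=t^m\bigl(|\nabla^m H|^2+|\nabla^m F|^2+|\nabla^{m-1}{\rm Rm}|^2\bigr)+\sum_{i=1}^{m-1}\beta_i\, t^i\bigl(|\nabla^i H|^2+|\nabla^i F|^2+|\nabla^{i-1}{\rm Rm}|^2\bigr)+\gamma\bigl(|H|^2+|F|^2\bigr),
\end{equation*}
with $\beta_i=A/i!$ as in Theorem 4.6 and $A,\gamma$ to be chosen large. Assuming the estimate for all orders $\leq m-1$, every lower-order factor $|\nabla^j H|$, $|\nabla^j F|$, $|\nabla^{j-1}{\rm Rm}|$ that appears in the evolution inequalities can be replaced by $C_j/t^{j/2}$, and after Cauchy--Schwarz the cubic and quartic sums turn into terms of the form $(C/t^{(m+1)/2})|\nabla^m\cdot|$, which are controlled by the gradient penalties $-2t^m|\nabla^{m+1}\cdot|^2$ after completing the square. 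The telescoping relation $(i+1)\beta_{i+1}=\beta_i$ is what makes the coefficient of $t^{i-1}(|\nabla^i H|^2+|\nabla^i F|^2+|\nabla^{i-1}{\rm Rm}|^2)$ produced by $\partial_t(t^i(\cdots))$ cancel against the $-2\beta_{i-1}t^{i-1}$ coming from the gradient term in the preceding layer. Choosing $A$ and $\gamma$ large closes the estimate $\partial_t u\leq\Delta u+C$, so $u\leq C$ by the maximum principle, giving the required bound.

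The main obstacle is bookkeeping rather than ideas: three evolution inequalities contribute simultaneously, and the new cross-sums $\sum_{i+j=l-i}\nabla^i F\ast\nabla^j H\ast\nabla^{l-i-j}H$ and their $F\leftrightarrow H$ counterparts produce several new off-diagonal terms that must be shown to be swallowed by the diagonal penalties $-2t^m(|\nabla^{m+1}H|^2+|\nabla^{m+1}F|^2+|\nabla^m{\rm Rm}|^2)$. These all have the same scaling profile as in Theorem 4.6, so the choice of weights $\beta_i=A/i!$ still works; one only needs to verify that each new cross-term, after applying Young's inequality and the inductive bounds, contributes at most $C/t$ (absorbed into the $C$ on the right) or at most a constant times $t^{i-1}|\nabla^i H|^2$ or $t^{i-1}|\nabla^i F|^2$ (absorbed into the gradient penalty from the next layer). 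Finally, the initial value $u(0)$ is finite because the $t^i$ prefactors kill the singular lower-order terms at $t=0$, yielding the desired bound on $(0,\alpha/K]$.
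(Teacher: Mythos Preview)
Your proposal is correct and follows essentially the same approach as the paper's proof: the paper also defines $u=t(|\nabla H|^{2}+|\nabla F|^{2}+|{\rm Rm}|^{2})+\beta|H|^{2}+\gamma|F|^{2}$ for $m=1$ and, for the induction, the same weighted sum with coefficients $\beta_{i}$ (and separate constants on $|H|^{2}$ and $|F|^{2}$, which is an immaterial difference from your single $\gamma$), then absorbs the new cross terms $|\nabla^{i}F|\cdot|\nabla^{j}H|$ via Young's inequality exactly as you describe. The only point worth noting is that the paper explicitly writes out the $m=1$ computation to display how the off-diagonal terms $Ct|\nabla{\rm Rm}|\cdot|\nabla H|$, $Ct|\nabla{\rm Rm}|\cdot|\nabla F|$, $Ct|\nabla F|\cdot|\nabla H|$ are each split and absorbed into $-2t|\nabla{\rm Rm}|^{2}$ and the $(C-2\beta)|\nabla H|^{2}$, $(C-2\gamma)|\nabla F|^{2}$ terms, which matches your verbal description.
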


We can also establish the corresponding compactness theorem for
${\rm RF}(A,B)$. We omit the detail since the proof
is close to the proof in Section \ref{5}. In the forthcoming paper, we will
consider the BBS estimates for complete noncompact Riemmanian
manifolds.

%%%%%%%%%%%%%%%%%%%%%%%%%%%%%%%%%%%%%%%%%%%%%%%%%%%%%%%%%%%%%%%%%%%%%%%%%%%%%%

%%%%%%%%%%%%%%%%%%%%%%%%%%%%%%%%%%%%%%%%%%%%%%%%%%%%%%%%%%%%%%%%%%%%%%%%%%%%%%


\begin{thebibliography}{10}

\bibitem{B} Bakas, I., \textit{Renormalization group equations and geometric flows}, preprint, arXiv: hep-th/0702034v1.


\bibitem{CZ} Cao, H.-D., Zhu, X.-P., \textit{A complete proof of the Poincar\'e and geometrization conjectures--application of the Hamilton-Perelman theory of the Ricci flow}, Asian J. Math., {\bf 10}(2006), no. 2, 165--492.

\bibitem{CK} Chow, B.; Knopf, D., \textit{The Ricci flow: an introduction}, Mathematical Surveys and Monographs, Volume {\bf 110}, American Mathematical Society, 2004.

\bibitem{CLN} Chow, B.; Lu, P.; Ni, L., \textit{Hamilton's Ricci flow}, Graduate Studies in Mathematics, Volume {\bf 77}, American Mathematical Socity and Science Press, 2006.

\bibitem{CCGGIIKLLN} Chow, B.; Chu, S.-C.; Glickenstein; D., Guenther, C.; Isenberg, J.; Ivey, T.; Knopf, D; Lu, P.; Luo, F.; Ni, L., \textit{The Ricci flow: Techniques and applications, I-III}, Mathematical Surveys and Monographs series {\bf 135}, {\bf 144}, {\bf 163}, AMS, Providence, RI, 2007, 2008, 2010.

\bibitem{GK} Gegenberg, J; Kunstatter, G., \textit{Using $3D$ stringy gravity to understand the Thurston conjecture}, Class. Quant. Grav., {\bf 21}(2004), 1197--1208.

\bibitem{GVP} Gegenberg, J.; Vaidya, S.; V\'azquez-Poritz, J., \textit{Thurston geometries from eleven dimensions}, Class. Quant. Grav., {\bf 19}(2002), 199--204.

\bibitem{H1} Hamilton, R., \textit{Three-manifolds with positive
Ricci curvature}, J. Differential Geom., {\bf 17}(1982), no. 2, 255--306.


\bibitem{H2} Hamilton, R., \textit{A compactness property for
solutions of the Ricci flow}, Amer. J. Math., {\bf 117}(1995), no.3,
545--572.

\bibitem{HHKL} He, C.-L.; Hu, S.; Kong, D.-X.; Liu, F., \textit{Generalized Ricci flow I: Local existence and uniqueness}, Topology and physics, 151--171, Nankai Tracts Math., {\bf 12}, World Sci. Publ., Hackensack, NJ, 2008.
    (arXiv: 1107.3270)

\bibitem{KL} Kleiner, B.; Lott, J., \textit{Notes on Perelman's papers}, Geometry and Topology, {\bf 12}(2008), 2587--2855.

\bibitem{MT} Morgan, J.; Tian, G;, \textit{Ricci flow and the Poincar'e conjecture}, Clay Mathematics Monographs, Volume {\bf 3}, CMI/AMS, 2007.

\bibitem{OSW} Oliynyk, O; Sunneeta, V.; Woolgar, E., \textit{A gradient flow
for worldsheet nonlinear sigma models}, Nucl. Phys., {\bf B 739}(2006), 441--458.


\bibitem{P} Perelman, G., \textit{The entropy formula for the Ricci flow and its geometric applications}, arXiv: math.DG/0211159.

\bibitem{Shi} Shi, W.-X., \textit{Deforming the metric on complete
Riemmanian manifolds}, J. Differential Geom., {\bf 30}(1989), no. 1, 223--301.


\bibitem{S1} Streets, J, \textit{Regularity and expanding entropy for connection Ricci flow}, J. Geom. Phys., {\bf 58}(2008), 900--912.

\bibitem{S2} Streets, J., \textit{Ricci Yang-Mills flow}, Ph.D. thesis,
Duke University, 2007.


\bibitem{Y} Young, A.,\textit{Modified Ricci flow on a
principal bundle}, Ph.D. thesis, The University of Texas at Austin,
2008.



\end{thebibliography}
\end{document}